\documentclass{amsart}%
\usepackage{palatino, mathpazo}
\usepackage{amsfonts}
\usepackage{amsmath}
\usepackage{amssymb,latexsym}
\usepackage{graphicx}
\usepackage[mathscr]{eucal}
\usepackage{amssymb}%
\setcounter{MaxMatrixCols}{30}

\providecommand{\U}[1]{\protect \rule{.1in}{.1in}}

\newtheorem{theorem}{Theorem}[section]

\newtheorem{corollary}[theorem]{Corollary}

\newtheorem{lemma}[theorem]{Lemma}
\newtheorem{proposition}[theorem]{Proposition}
\newtheorem{Theorem}{Theorem}

\theoremstyle{remark}
\newtheorem{remark}[theorem]{Remark}

\numberwithin{equation}{section}

\def\mod{\ \mathrm{mod}\ }

\def\Im{\mathrm{Im\,}}

\def\SM#1#2#3#4{\left(\begin{smallmatrix}#1&#2\\#3&#4\end{smallmatrix}
  \right)}

\setcounter{tocdepth}{1}
\begin{document}
\title[]{Monodromy of a generalized Lam\'{e} equation of third order}
\author{Zhijie Chen}
\address{Department of Mathematical Sciences, Yau Mathematical Sciences Center,
Tsinghua University, Beijing, 100084, China}
\email{zjchen2016@tsinghua.edu.cn}
\author{Chang-Shou Lin}
\address{Department of Mathematics, Taiwan University, Taipei 10617, Taiwan }
\email{cslin@math.ntu.edu.tw}

\begin{abstract}
We study the monodromy of the following third order linear differential equation
\[y'''(z)-(\alpha\wp(z;\tau)+B)y'(z)+\beta\wp'(z;\tau)y(z)=0,
\]
where $B\in\mathbb{C}$ is a parameter, $\wp(z;\tau)$ is the Weierstrass $\wp$-function with periods $1$ and $\tau$, and $\alpha,\beta$ are constants such that
the local exponents at the singularity $0$ are three distinct integers, which can always be written as $-n-l, 1-l, n+2l+2$ after a dual transformation, where $n,l\in\mathbb{N}$. This ODE can be seen as the third order version of the well-known Lam\'{e} equation $y''(z)-(m(m+1)\wp(z;\tau)+B)y(z)=0$. We say that the monodromy is unitary if the monodromy group is conjugate to a subgroup of the unitary group. We show that 
\begin{itemize}
\item[(i)] if $n, l$ are both odd, then the monodromy can not be unitary; 

\item[(ii)] if $n$ is odd and $l$ is even, then there exist finite values of $B$ such that the monodromy is the Klein four-group and hence unitary; 

\item[(iii)] if $n$ is even, then whether there exists $B$ such that the monodromy is unitary depends on the choice of the period $\tau$. 
\end{itemize}
The methods of studying the second order Lam\'{e} equation can not work  here, and
we need to develop different approaches to treat these different cases separately. These results have interesting applications to the integrable $SU(3)$ Toda system in another work (Chen-Lin, J. Differ. Geom. to appear).
\end{abstract}

\maketitle


\section{Introduction}

In this paper, we study the monodromy of
a third order linear differential equation of the following form
\begin{equation}\label{3ode}
y'''(z)-(\alpha \wp(z;\tau)+B)y'(z)+\beta\wp'(z;\tau)y(z)=0,
\end{equation}
where $\wp(z)=\wp(z;\tau)$ is the Weierstrass $\wp$-function with periods $\omega_1=1$ and $\omega_2=\tau\in\mathbb{H}:=\{\tau\in\mathbb{C}|\Im\tau>0\}$, $\alpha,\beta$ are constants such that
the local exponents at the singularity $0$ are three distinct integers, denoted by $\varsigma_1<\varsigma_2<\varsigma_3$, and $B\in\mathbb{C}$ is a parameter.
The ODE (\ref{3ode}) arises from the subjects of algebraically integrable differential operators on the elliptic curve $E_{\tau}:=\mathbb{C}/(\mathbb{Z}+\tau\mathbb{Z})$ (see \cite{ER}) and the $SU(3)$ Toda system on $E_{\tau}$ (see \cite{CL-noncritical}).
It can also be seen as the third order generalization of the well-known Lam\'{e} equation
\begin{equation}\label{new-lm}
y''-[m(m+1)\wp(z)+B]y=0, \quad m\in\mathbb{N}_{\geq 1}.
\end{equation}
The Lam\'{e} equation \eqref{new-lm} has been widely studied in the literature, see e.g. \cite{CLW,Gesztesy-Weikard,Gesztesy-Weikard1,CLW2,Takemura1,Takemura3,Whittaker-Watson} and references therein, and the monodromy of \eqref{new-lm} has been well understood; see Theroems \ref{thmA}-\ref{thmB} below.
However, as far as we know, there are few results for (\ref{3ode}) in the literature.

Let us recall some basic notions for linear ODEs with complex variable (see e.g. \cite[Chapter 1]{GP}). The first one is the local exponents at a singularity. Clearly (\ref{3ode}) has only one singularity at $0$ in $E_{\tau}$. Suppose (\ref{3ode}) has a local solution of the following form near the singularity $0$
\begin{equation}\label{ya}y_a(z)=z^{a}\sum_{j=0}^{\infty}c_j z^j=c_0z^{a}(1+O(z))\;\text{ with }c_0\neq 0,\end{equation}
then inserting this expression in (\ref{3ode}) easily leads to
\begin{equation} \label{a3}
a^3-3a^2+(2-\alpha)a-2\beta=0,
\end{equation}
the zeros of which, denoted by $\varsigma_1, \varsigma_2, \varsigma_3$,
are called the \emph{local exponents} of (\ref{3ode}) at $0$.
Thus, $\alpha, \beta$ can be determined by
\[\alpha=2-\varsigma_1\varsigma_2-\varsigma_1\varsigma_3-\varsigma_2\varsigma_3,\quad \beta=\varsigma_1\varsigma_2\varsigma_3/2.\]

\begin{remark}
In this paper, we always assume that the local exponents are three distinct integers, i.e. $\varsigma_1<\varsigma_2<\varsigma_3$ and $\varsigma_j\in\mathbb Z$. Thus there are two nonnegative integers $n_1, n_2$ such that
\[\varsigma_2=\varsigma_1+n_1+1,\quad \varsigma_3=\varsigma_2+n_2+1.\]
Since \eqref{a3} gives $\sum \varsigma_j=3$, we immediately obtain $\varsigma_1=-\frac{2n_1+n_2}{3}\in\mathbb{Z}$, so $3\big| (n_2-n_1)$. Furthermore, we will explain in Remark \ref{rmk-dual} that the case $n_2<n_1$ can be reduced to the case $\tilde{n}_1=n_2<\tilde{n}_2=n_1$ by considering the dual equation. Therefore, we only need to study the case $n_1\leq n_2$. In conclusion, we always assume in the sequel that
\begin{equation}\label{n2n13l1}(n_1, n_2)=(n, n+3l), \quad n, l\in \mathbb{N},\quad (n,l)\neq (0,0).\end{equation}
(Note that $0$ is a regular point of \eqref{3ode} if $n=l=0$.) Consequently,
\begin{equation}\label{localex}\varsigma_1=-n-l,\quad \varsigma_2=1-l, \quad \varsigma_3=n+2l+2,\end{equation}
\begin{equation}\label{alpha-nl}
\alpha=n^2+3nl+3l^2+3l+2n,
\end{equation}
\begin{equation}\label{beta-nl}
\beta=(l-1)(n+l)(n+2l+2)/2.
\end{equation}

\end{remark}

From the theory of linear ODEs with complex variable (see e.g. \cite[Chapter 1]{GP}), we collect some basic facts.
\begin{itemize}
\item (\ref{3ode}) always has a local solution $y_{a}(z)$ with $a=\varsigma_3=n+2l+2$.
\item Since the local exponents are all integers, (\ref{3ode}) may not have local solutions $y_{a}(z)$ of the form \eqref{ya} with $a\in \{-n-l, 1-l\}$ in general. If this happens, then (\ref{3ode}) must have solutions with logarithmic singularity at $0$, i.e. the local expansion of such solutions at $0$ is not of the form \eqref{ya} but contains $\ln z$ terms.
\item The singularity $0$ is called \emph{apparent} if any solution of (\ref{3ode}) has no logarithmic singularity at $0$, i.e. (\ref{3ode}) also has a local solution $y_{a}(z)$ with each $a\in\{-n-l, 1-l\}$. Clearly this implies that all solutions of (\ref{3ode}) are single-valued and meromorphic near the singularity $0$ and hence single-valued and meromorphic in $\mathbb{C}$ via the analytic continuation.
\item Suppose $0$ is an \emph{apparent} singularity of (\ref{3ode}). Then all solutions are meromorphic in $\mathbb{C}$, so the monodromy representation reduces to a group homeomorphism $\rho: \pi_1(E_{\tau})\to SL(3,\mathbb{C})$ as follows: Given a basis $(y_0,y_1,y_2)$ of (\ref{3ode}), for any $\ell\in \pi_1(E_{\tau})$, there is a monodromy matrix $\rho(\ell)\in SL(3,\mathbb{C})$ such that
     \[\ell^*(y_0,y_1,y_2)=(y_0,y_1,y_2)\rho(\ell),\]
     where $\ell^*y$ denotes the analytic continuation of $y(z)$ along the loop $\ell$. The fact $\rho(\ell)\in SL(3,\mathbb{C})$ (i.e. $\det \rho(\ell)=1$) follows from the simple fact that the Wronskian $\det W(y_0,y_1,y_2)\equiv C$ for some constant $C\neq 0$ independent of $z$, where
\begin{equation}\label{wrons}W(y_0,y_1,y_2):=\begin{pmatrix}y_0 & y_1 & y_2\\y_0' & y_1' & y_2'\\y_0'' & y_1'' & y_2''\end{pmatrix}.\end{equation}
     Now let $N_1, N_2\in SL(3,\mathbb{C})$ denote the monodromy matrices of (\ref{3ode}) with respect to the basic loops $z\to z+\omega_1$ and $z\to z+\omega_2$ in $\pi_1(E_{\tau})$ respectively. Then the monodromy group (i.e. the image of $\rho$) is generated by $N_1, N_2$. Since $\pi_1(E_{\tau})$ is abelian, we have
\begin{equation}\label{abelianm}N_1N_2=N_2N_1.\end{equation}
The monodromy matrices and so the monodromy group depend on the choice of the basis of (\ref{3ode}), and the monodromy groups with respect to different choices of basis are conjugated to each other.
\item The monodromy of (\ref{3ode}) is called \emph{unitary}, if there is a basis $(y_0,y_1,y_2)$ of (\ref{3ode}) such that the monodromy group with respect to $(y_0,y_1,y_2)$ is contained in the unitary group $SU(3)$, which is equivalent to $N_1, N_2\in SU(3)$.
\item It is well known that if (\ref{3ode}) has solutions with logarithmic singularity at $0$, then the local monodromy matrix at $0$ can not be a unitary matrix, so the monodromy can not be unitary.
\end{itemize}

One of our motivations comes from the classical theory (\cite{Whittaker-Watson}) for the monodromy of the Lam\'{e} equation \eqref{new-lm}, the local exponents of which at the singularity $0$ are distinct integers $-m, m+1$. Let $\widetilde{N}_j\in SL(2,\mathbb{C})$ denote the monodromy matrix of \eqref{new-lm} with respect to the basic loops $z\to z+\omega_j$ of $\pi_1(E_\tau)$.

\begin{Theorem}\cite{Whittaker-Watson, Gesztesy-Weikard, CLW}\label{thmA} \
\begin{itemize}
\item[(1)]
$0$ is always an apparent singularity of \eqref{new-lm} for any $B\in\mathbb{C}$, i.e. all solutions of \eqref{new-lm} are meromorphic in $\mathbb{C}$.

\item[(2)] There is a so-called Lam\'{e} polynomial $\ell_m(B)$ of degree $2m+1$ such that all the monodromy matrices of \eqref{new-lm} can be diagonized simultaneously if and only if $\ell_m(B)\neq 0$. In this case, up to a common conjugation,
\begin{equation}
\widetilde{N}_j=\operatorname{diag}(\lambda_{j}, \lambda_{j}^{-1}),\quad j=1,2,
\label{Mono-1}%
\end{equation}
with $\lambda_{j}=\lambda_j(B)\neq \pm 1$ for at least one $j\in \{1,2\}$.
\item[(3)]The monodromy is unitary if and only if $\ell_m(B)\neq 0$ and the corresponding $(\lambda_{1}, \lambda_{2})=(\lambda_{1}(B), \lambda_{2}(B))$ satisfies $|\lambda_{1}|=|\lambda_{2}|=1$.
\end{itemize}
\end{Theorem}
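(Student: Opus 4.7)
The plan is to derive everything from the classical Hermite--Halphen theory of elliptic Floquet solutions of \eqref{new-lm}, and then read off the monodromy directly from their multiplicative shift behaviour.

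For part (1), I would use that $\wp(\cdot;\tau)$ is an even function: if $y(z)$ solves \eqref{new-lm}, then so does $y(-z)$, hence the two-dimensional solution space splits into even and odd parts. The Frobenius exponents $-m$ and $m+1$ at $0$ have opposite parities mod~$2$, so the unique-up-to-scalar local solution attached to each exponent must coincide with either the even or the odd summand of a local basis. Both are ordinary Laurent series without a logarithmic term, hence $0$ is an apparent singularity and all global solutions continue meromorphically to $\mathbb{C}$.

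For part (2), I would look for simultaneous eigenfunctions of $\widetilde N_1,\widetilde N_2$ of Hermite type,
\[
y_\pm(z)=e^{\pm \kappa z}\,\frac{\sigma(z-a_1)\cdots \sigma(z-a_m)}{\sigma(z)^{m}},
\]
with $\kappa=\kappa(B)$ and $a_i=a_i(B)$. Plugging into \eqref{new-lm} yields algebraic constraints on $(\kappa,a_1,\ldots,a_m)$ that carve out a spectral curve $\Gamma_m$ doubly covering the $B$-line. By the quasi-periodicity of $\sigma$, every such $y_\pm$ is automatically a simultaneous eigenvector of $\widetilde N_1,\widetilde N_2$ with reciprocal multipliers $\lambda_j^{\pm 1}$ coming from the shifts of $\kappa z$ and the transformation law of $\sigma$ under $z\mapsto z+\omega_j$; this already produces the diagonal form \eqref{Mono-1} in the basis $(y_+,y_-)$. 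I then define $\ell_m(B)$ to be the discriminant of the covering $\Gamma_m\to\mathbb{C}_B$, so that its zeros are exactly the branch points, where $y_+$ and $y_-$ coalesce and the monodromy pair fails to be simultaneously diagonalizable. Excluding $(\lambda_1,\lambda_2)=(\pm 1,\pm 1)$ whenever $\ell_m(B)\neq 0$ follows from the fact that such a coincidence would force both $y_\pm$ to be doubly (anti-)periodic Lam\'e polynomial solutions for the same $B$, whereas those spectral values are by construction zeros of $\ell_m$.

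For part (3), once $\widetilde N_1,\widetilde N_2$ are simultaneously diagonal, unitary conjugacy of the pair in any basis is equivalent to $|\lambda_1|=|\lambda_2|=1$: a diagonal matrix is unitary iff its entries are unimodular, and conjugation preserves the spectrum. Conversely, if $\ell_m(B)=0$ then the Jordan form of at least one $\widetilde N_j$ must contain a $2\times 2$ block with double eigenvalue $\pm 1$, and a Jordan block is never conjugate to a unitary matrix, so unitarity of the monodromy forces $\ell_m(B)\neq 0$. The main obstacle, in my view, is pinning down the exact degree $\deg \ell_m=2m+1$ rather than only $\leq 2m+1$: the Hermite ansatz gives the upper bound easily by a parameter count, whereas the lower bound requires either a careful Puiseux analysis of $\Gamma_m$ near $B=\infty$ or, more conceptually, invoking the algebro-geometric characterisation of the Lam\'e operator as a finite-gap operator of genus $m$, which forces the spectral curve to have exactly $2m+1$ branch points.
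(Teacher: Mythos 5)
The paper does not prove Theorem \ref{thmA} at all --- it is quoted from \cite{Whittaker-Watson,Gesztesy-Weikard,CLW} --- so there is no in-paper argument to compare yours against. Your route is the classical Hermite--Halphen one (elliptic-of-the-second-kind ansatz plus a spectral curve). The closest internal analogue is the paper's treatment of the third-order case in Section \ref{sec-monoth}, where the spectral polynomial is instead produced as the square of the Wronskian of the two Floquet solutions via the second symmetric power equation (Lemma \ref{lem-ode-4}); that construction has the advantage that non-vanishing of the polynomial is \emph{literally} equivalent to linear independence of the two common eigenfunctions, which is precisely the equivalence your sketch has to work to establish.

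Two steps in your part (2) are asserted rather than proved, and they carry the whole ``if and only if''. First, you claim that at a zero of $\ell_m$, where $y_+$ and $y_-$ coalesce, the pair $(\widetilde N_1,\widetilde N_2)$ ``fails to be simultaneously diagonalizable''. Coalescence of the two \emph{Hermite-type} solutions does not by itself imply this: since the matrices commute, simultaneous diagonalizability is equivalent to the existence of two independent \emph{common eigenfunctions}, and you must first show that every common eigenfunction is elliptic of the second kind and hence, after matching its only possible pole with the local exponents at $0$, necessarily of your Hermite form. Only then does ``one Hermite solution up to scalar'' yield a one-dimensional common eigenspace and hence non-diagonalizability. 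Second, the exclusion of $(\lambda_1,\lambda_2)\in\{\pm1\}^2$ when $\ell_m(B)\neq0$ is justified only ``by construction'', but you constructed $\ell_m$ as a discriminant of the covering $\Gamma_m\to\mathbb{C}_B$; identifying its zero set with the set of $B$ admitting a Lam\'e function (a solution with both multipliers in $\{\pm1\}$) is a separate, nontrivial step. Note also that $(\lambda_1,\lambda_2)=(1,1)$ is ruled out by a different mechanism: then all solutions would be elliptic and the one with exponent $m+1$ would be pole-free, hence zero --- the argument the paper uses in Theorem \ref{thm-nonexistence} and Theorem \ref{Dual-Monodromy}. Part (1) is right in spirit but should be phrased via the recursion: since $\wp$ is even, the Frobenius recursion for the exponent $-m$ involves only even-index coefficients, while the resonance with the exponent $m+1$ sits at the odd index $2m+1$, so the obstruction to a log-free solution vanishes identically (this is exactly the paper's argument in Remark \ref{rrr} and Lemma \ref{Lem-app} for the third-order equation). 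Part (3) is correct. All of these gaps are fillable from the cited classical sources, but as written the proposal does not close the ``only if'' directions of part (2).
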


The Lam\'{e} polynomial $\ell_m(B)$ is also called the spectral polynomial in the theory of stationary KdV hierarchy; see e.g. \cite{Gesztesy-Weikard,Gesztesy-Weikard1,Takemura1,Takemura3}.
After Theorem \ref{thmA}, a natural question is whether there is $B$ such that the monodromy of (\ref{new-lm}) is unitary? It turns out this question is very subtle.
Define
\[\Omega_m:=\{\tau\in\mathbb{H}\,|\, \text{\it There is $B$ such that the monodromy of \eqref{new-lm} is unitary}\}.\]

\begin{Theorem}\label{thmB}\cite{Dahmen,CLW2,CL-AJM} $\Omega_m\neq \emptyset$ and $\Omega_m\neq \mathbb{H}$. In particular, $\Omega_m\cap i\mathbb{R}_{>0}=\emptyset$.
\end{Theorem}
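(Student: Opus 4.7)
The implication $\Omega_m \cap i\mathbb{R}_{>0} = \emptyset \Longrightarrow \Omega_m \neq \mathbb{H}$ is immediate, so two substantive claims remain.

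For $\Omega_m \neq \emptyset$, I would follow Dahmen and produce an explicit pair $(\tau_0,B_0)$ with unitary monodromy at a sufficiently symmetric modulus: either a CM point, or a degeneration at the cusp $\tau\to i\infty$ where the Lam\'{e} equation reduces to a trigonometric equation whose monodromy can be computed in closed form. At such a special configuration the Lam\'{e} polynomial $\ell_m(B)$ factors enough that the two eigenvalues $\lambda_j(B_0)$ of Theorem \ref{thmA}(2) can be evaluated, and one checks $|\lambda_1|=|\lambda_2|=1$ by direct computation (small $m$) or by a continuity/deformation argument anchored at the cusp (general $m$).

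For $\Omega_m\cap i\mathbb{R}_{>0}=\emptyset$, fix $\tau=i\beta$ with $\beta>0$ and exploit the rectangular symmetry $\overline{\wp(z;\tau)}=\wp(\bar z;\tau)$, which makes $\wp$ real on both coordinate axes $\mathbb{R}$ and $i\mathbb{R}$. Suppose some $B\in\mathbb{C}$ produces unitary monodromy. Using the reflection $z\mapsto\bar z$ together with the uniqueness (up to inversion) of the simultaneous diagonal eigenvalues $\lambda_j(B)$ supplied by Theorem \ref{thmA}(2) forces $B\in\mathbb{R}$. Restricting the Lam\'{e} equation to $\mathbb{R}$ then gives a real periodic Hill equation of period $1$ whose Floquet multiplier is $\lambda_1(B)$; the substitution $z=iw$ turns its restriction to $i\mathbb{R}$ into a second real Hill equation of period $\beta$ with Floquet multiplier $\lambda_2(B)$. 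Unitarity demands $|\lambda_1|=|\lambda_2|=1$ simultaneously. The heart of the proof is a band--gap complementarity statement: by the Hermite--Halphen / finite-gap representation of the integer-$m$ Lam\'{e} spectrum, both real Hill operators share the same $2m+1$ band edges (the roots of $\ell_m$), and their allowed bands strictly interlace, so the only real $B$ realising $|\lambda_1|=|\lambda_2|=1$ are exactly these $2m+1$ band edges; at each such $B$ one has $\ell_m(B)=0$, which is ruled out by Theorem \ref{thmA}(2), a contradiction.

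The main obstacle is this band--gap complementarity step. A soft Sturm oscillation argument is insufficient: one genuinely needs the integer-$m$ finite-gap structure, namely that the two perpendicular Hill operators share their entire set of band edges and that these alternate in the required complementary pattern, to conclude that the two spectral bands meet only at the $2m+1$ exceptional values. A secondary difficulty is producing a usable $(\tau_0,B_0)$ for the nonemptiness assertion, since explicit control of the roots of $\ell_m(B)$ becomes delicate for large $m$ and typically forces one to argue via the cusp limit $\tau\to i\infty$ rather than at any single finite modulus.
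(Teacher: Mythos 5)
First, note that the paper does not prove Theorem \ref{thmB} at all: it is quoted from \cite{Dahmen,CLW2,CL-AJM}, so there is no internal argument to compare yours against. Judged on its own, your proposal has two genuine gaps. The more serious one is the nonemptiness claim $\Omega_m\neq\emptyset$. Unitarity is the condition $|\lambda_1(B)|=|\lambda_2(B)|=1$, which is two real equations on the single complex parameter $B$; it is a real-codimension-two condition, not an open one, so it does not propagate by ``continuity/deformation'' from the cusp $\tau\to i\infty$ (which is not even a point of $\mathbb{H}$) to nearby finite $\tau$. Establishing that the zero set of the pair of real equations is nonempty for some finite $\tau$ is exactly the hard content of the cited results: Dahmen does it by counting Lam\'e equations with \emph{finite} (hence automatically unitarizable) monodromy via dessins d'enfants, and \cite{CLW2,CL-AJM} do it by locating zeros of the premodular forms $Z_{r,s}(\tau)$ (nonconstant holomorphic functions of $\tau$, hence with zeros, each zero producing a $B$ with $\lambda_1=e^{-2\pi is}$, $\lambda_2=e^{2\pi ir}$, $(r,s)\in\mathbb{R}^2$). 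Your sketch replaces this with an argument that, as stated, does not close.

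For $\Omega_m\cap i\mathbb{R}_{>0}=\emptyset$ your outline is the right shape, but both load-bearing steps are asserted rather than proved. (a) The reduction to $B\in\mathbb{R}$ via ``uniqueness up to inversion of the eigenvalues'' is not valid: the conjugated equation has parameter $\bar B$ with the same unordered multiplier pairs, but $B\mapsto(\lambda_1+\lambda_1^{-1},\lambda_2+\lambda_2^{-1})$ is not injective, so this does not force $\bar B=B$. A correct route is that $|\lambda_1(B)|=1$ gives a Floquet solution bounded on a horizontal line where the potential is real, hence $-B$ lies in the spectrum of a self-adjoint Hill operator and is therefore real. (b) The band--gap complementarity of the two perpendicular Hill operators --- that their conditional stability sets meet only in the $2m+1$ roots of $\ell_m$, where Theorem \ref{thmA}(2) already excludes unitarity --- is, as you yourself flag, the heart of the matter; it requires the finite-gap/spectral-curve analysis of \cite{Gesztesy-Weikard,CLW,CL-AJM} and is not supplied. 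So the proposal is a reasonable roadmap toward the second assertion but is not a proof of either part.
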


Theorem \ref{thmB} says that there are $\tau$'s (i.e. those $\tau\in \Omega_m$) such that the monodromy of (\ref{new-lm}) is unitary for some $B$'s; but there are also $\tau$'s (such as $\tau\in i\mathbb{R}_{>0}$, i.e. $\tau$ is purely imaginary) such that the monodromy of (\ref{new-lm}) can not be unitary for any $B$.

In view of Theorems \ref{thmA}-\ref{thmB} for the second order Lam\'{e} equation, this paper aims to study the same question for the third order ODE \eqref{3ode}:

\medskip
\noindent{\bf (Q):} \emph{Does there exist $B\in\mathbb{C}$ such that $0$ is an apparent singularity and the monodromy is unitary for ODE \eqref{3ode}?}

\begin{remark}
This problem is fundamental but highly nontrivial in the theory of high order linear ODEs with complex variable. For example, consider the case that $n$ is odd and $l$ is even, then the three local exponents (see \eqref{localex}) are \emph{all odd}, and the standard Frobenius's method (see e.g. \cite[Chapter 1]{GP}) implies the existence of three polynomials $P_j(B)$ ($j=1,2,3$) of $B$ such that \emph{$0$ is an apparent singularity of \eqref{3ode} if and only if $B$ is a common root of the three polynomials}, i.e.
\[P_1(B)=P_2(B)=P_3(B)=0;\]
See Section 4 for details.
Since the explicit expressions of $P_j(B)$'s are unknown, it seems impossible to see why such $B$ exists. We need to develop new idea to overcome this difficulty.
\end{remark}

Our main results of this paper are as follows, which can be seen as the generalization of the above monodromy theory of the Lam\'{e} equation \eqref{new-lm} to the third order case. One will see that the third order case is much more complicated and some new phenomena appear.

\begin{theorem}\label{thm-ode-mo} Suppose $n\geq 1$ is odd. Then there is a polynomial $P_{n,l}(B)$ of degree $\frac{n+1}{2}$ such that $0$ is an apparent singularity of \eqref{3ode} if and only if $P_{n,l}(B)=0$. In this case,
\begin{itemize}
\item[(1)] If $l$ is odd, then the monodromy of \eqref{3ode} can not be unitary.
\item[(2)] If $l$ is even, then the monodromy group of \eqref{3ode} is the Klein-four group $K_4$ and so the monodromy is unitary.
\end{itemize}
Finally, the polynomial $P_{n,l}(B)$ has $\frac{n+1}{2}$ distinct roots expect for $\lceil \frac{n^2-1}{24}\rceil$ numbers of $\tau$'s modulo $SL(2,\mathbb{Z})$.
\end{theorem}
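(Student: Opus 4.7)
My approach rests on exploiting the involution $\sigma\colon z\mapsto -z$, which preserves \eqref{3ode} because $\wp$ is even and $\wp'$ is odd. Whenever $0$ is an apparent singularity I would normalise the Frobenius basis $(y_{\varsigma_1},y_{\varsigma_2},y_{\varsigma_3})$ so that $\sigma y_{\varsigma_j}=(-1)^{\varsigma_j}y_{\varsigma_j}$. Because $\sigma$ acts as $-I$ on $\pi_1(E_\tau)$, the generating monodromy matrices $N_1,N_2\in SL(3,\C)$ satisfy
\[
\sigma N_j\sigma^{-1}=N_j^{-1},\qquad j=1,2,
\]
and this relation is the workhorse of the argument. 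To extract $P_{n,l}(B)$, I would run the Frobenius recursion at $\varsigma_1=-n-l$ with $c_0=1$. Because \eqref{3ode} is $\sigma$-invariant, each coefficient $c_k$ depends only on $c_{k-2},c_{k-4},\ldots$, so the recursion decouples into two parity streams. With $n$ odd, the three a priori resonance obstructions (at indices $n+1$, $n+3l+1$, $2n+3l+2$) all lie in the same parity class, and tracking them together via the $\sigma$-symmetry collapses the apparent-singularity condition into a single polynomial identity $P_{n,l}(B)=0$; a count of free coefficients in that parity class before the first resonance gives $\deg P_{n,l}=(n+1)/2$.

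For part~(2), with $l$ even, all three exponents are odd, so $\sigma$ has only the eigenvalue $-1$ on the three-dimensional solution space and hence $\sigma=-I$. The displayed relation reduces to $N_j^2=I$, which together with $N_1N_2=N_2N_1$ embeds the monodromy group into the Klein four-group $K_4=(\Z/2)^2$. To rule out the proper subgroups $\{I\}$ and $\Z/2$, I would argue that any trivial direction would produce a meromorphic solution of \eqref{3ode} that is $\omega_j$-periodic for both $j$ (hence elliptic), whose pole structure at $0$ dictated by the local exponents $-n-l,\,1-l,\,n+2l+2$ violates the residue theorem on $E_\tau$ once the apparent-singularity condition is imposed; the monodromy is therefore exactly $K_4\subset SO(3)\subset SU(3)$, and unitary.

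For part~(1), with $l$ odd, $\sigma$ has eigenvalues $(1,1,-1)$. In the block form $N_j=\SM{A_j}{b_j}{c_j^T}{d_j}$, the relation $\sigma N_j\sigma=N_j^{-1}$ unwinds to $A_j^2-b_jc_j^T=I_2$, $A_jb_j=d_jb_j$, $c_j^TA_j=d_jc_j^T$, $d_j^2-c_j^Tb_j=1$; combined with $\det N_j=1$ this forces the eigenvalues of $A_j$ to be $\{1,d_j\}$, so $N_j$ has spectrum $\{1,\lambda_j,\lambda_j^{-1}\}$ with $\lambda_j+\lambda_j^{-1}=2d_j$. \textbf{The principal obstacle is excluding unitarity from here.} Under the unitarity hypothesis $N_1,N_2$ are commuting unitary matrices, hence simultaneously diagonalisable in an orthonormal basis $(e_1,e_2,e_3)$, and $\sigma N_j\sigma=N_j^{-1}$ translates into the statement that $\sigma$ acts by a signed permutation $\pi$ with $\alpha_{\pi(i)}=\alpha_i^{-1}$ and $\beta_{\pi(i)}=\beta_i^{-1}$ for the $N_1$- and $N_2$-eigenvalues. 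A case analysis of $\pi$ leaves only two possibilities: either $N_j^2=I$ (and the image sits inside $K_4$), or $N_1,N_2$ share a $1$-eigenvector and $\sigma$ swaps the other two common eigenlines. My plan is to rule out \emph{both} possibilities when $l$ is odd: the first by exhibiting an elliptic or $(\tfrac12,\tfrac12)$-anti-periodic meromorphic solution with pole structure at $0$ incompatible with $\varsigma_3$ being odd, and the second by reducing order through the shared $1$-eigenvector and showing the remaining second-order equation on $E_\tau$ inherits a parity mismatch at the $2$-torsion fixed points of $\sigma$ that forbids $SU(2)$-monodromy.

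Finally, for the distinct-root assertion I would view $P_{n,l}(B;\tau)$ as a polynomial in $B$ whose coefficients are holomorphic functions of $\tau$, form its discriminant $\Delta_{n,l}(\tau)$, and prove it is a non-zero modular form on $SL(2,\Z)$ of weight $(n^2-1)/2$. The valence formula on $SL(2,\Z)\backslash\H$ then pins the number of exceptional $\tau$-orbits (where $\Delta_{n,l}$ vanishes, i.e.\ where $P_{n,l}$ has a repeated root) to exactly $\lceil (n^2-1)/24\rceil$, after accounting for the elliptic points at $i$ and $e^{2\pi i/3}$.
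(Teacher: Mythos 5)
Your overall framework (the parity involution $z\mapsto -z$, the relation $SN_jS^{-1}=N_j^{-1}$, the Frobenius recursion in two parity streams, and the modular discriminant) is sound, but the proposal has genuine gaps at the two points where the paper has to work hardest, and in both places the idea you propose would not close the argument.

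First, the sufficiency of $P_{n,l}(B)=0$ when $l$ is even. You claim that because the three resonance indices lie in one parity class, ``tracking them together via the $\sigma$-symmetry collapses the apparent-singularity condition into a single polynomial identity.'' It does not. When $n$ is odd and $l$ is even all three exponents $-n-l,\,1-l,\,n+2l+2$ are odd, so the recursion at $\varsigma_1=-n-l$ in even steps meets \emph{two} resonances (at $j=\tfrac{n+1}{2}$, hitting $1-l$, and at $j=\tfrac{2n+3l+2}{2}$, hitting $n+2l+2$), and the recursion at $1-l$ meets a third; parity alone gives three a priori independent polynomial conditions on $B$, not one. The paper flags exactly this obstruction (Remark \ref{rrrmmm}) and resolves it only by descending to $\mathbb{P}^1$ via $x=\wp(z)$ and constructing, for each $i$, a solution of the form $(\wp(z)-e_i)^{1/2}\sum_j C_j(\wp(z)-e_i)^j$, the key step being the nontrivial identity $P_{n,l}=\hat P_{i;n,l}$ between the obstruction polynomials of the two recursions. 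Nothing in your sketch substitutes for this. (By contrast, when $n,l$ are both odd the collapse to a single polynomial is genuine, but only because Theorem \ref{even-elliptic-y1} supplies an explicit even elliptic solution with exponent $1-l$, removing one of the obstructions — an input your proposal never invokes.)

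Second, part (1). Your case analysis correctly reduces unitarity to two configurations, but the second one — commuting unitary $N_1,N_2$ with a common $1$-eigenvector and $\sigma$ swapping the other two eigenlines, spectrum $\{1,\lambda_j,\lambda_j^{-1}\}$ — is precisely the configuration that \emph{is realized and can be unitary} when $n$ is even (Theorem \ref{thm-ode-mono} and Theorem \ref{thml=1}). Hence no structural argument from the parity involution and the local exponents of the reduced second-order equation can exclude it; the exclusion must use an analytic input special to $n,l$ both odd. The paper's input is that whenever $0$ is apparent there is a \emph{second}, linearly independent even elliptic solution, of degree $\tfrac{n+l}{2}$ in $\wp$ (Lemmas \ref{lemma-twoelliptic}--\ref{lemma-poly}, proved by showing $P_{n,l}=\widetilde P_{n,l}$ via a Sturm-sequence argument at $\tau=i$); two elliptic solutions force all eigenvalues of $N_j$ to equal $1$, hence $N_j=I$ under unitarity, and the positive-exponent solution then gives a contradiction. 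Your proposal contains no trace of this second elliptic solution, and the tools you name (residue theorem, ``parity mismatch at $2$-torsion points'') do not distinguish $l$ odd from $l$ even or $n$ odd from $n$ even. Similarly, for part (2) your argument only places the monodromy group inside $K_4$; showing it equals $K_4$ (and that the three constructed solutions are independent) needs either the explicit $\sqrt{\wp-e_i}$ form of the basis or the $\mathbb{P}^1$ monodromy relation $\sigma_1\sigma_2\sigma_3\sigma_\infty=I$, not a residue count. Finally, the distinct-roots assertion hinges on proving the discriminant is not identically zero — in the paper this is the Sturm-sequence computation at $\tau\in i\mathbb{R}_{>0}$ — and the valence formula yields an upper bound $\lceil\tfrac{n^2-1}{24}\rceil$ on the exceptional orbits, not an exact count.
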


Given $a\in\mathbb{R}$, we denote
\[\lceil a\rceil:=\min\{b\in\mathbb{Z}\,|\, b\geq a\},\]
i.e. $\lceil a\rceil=a$ if $a$ is an integer and $\lceil a\rceil-1<a<\lceil a\rceil$ if $a$ is not an integer.

\begin{theorem}\label{thm-ode-mono} Suppose $n\geq 0$ is even. Then $0$ is always an apparent singularity of \eqref{3ode} for any $B\in\mathbb{C}$. Moreover,
\begin{itemize}
\item[(1)]
There exists a polynomial $Q_{n,l}(B)$ of odd degree
\[\deg Q_{n,l}(B)=\begin{cases}2n+3l+2 \quad\text{if $l$ is odd}\\
n+3l+1\quad\text{if $l$ is even}
\end{cases}\]
such that the monodromy matrices can be diagonalized simultaneously if and only if $Q_{n,l}(B)\neq 0$. In this case, up to a common conjugation,
\[N_j=\text{diag}(1, \lambda_{j},\lambda_{j}^{-1}), \quad \quad j=1,2,\]
    with $\lambda_{j}=\lambda_j(B)\neq \pm 1$ for at least one $j\in \{1,2\}$.
\item[(2)]The monodromy is unitary if and only if $Q_{n,l}(B)\neq 0$ and the corresponding $(\lambda_{1}, \lambda_{2})=(\lambda_{1}(B), \lambda_{2}(B))$ satisfies $|\lambda_{1}|=|\lambda_{2}|=1$.
    \end{itemize}

\end{theorem}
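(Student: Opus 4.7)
The plan is to proceed in three stages: (i) show that $0$ is an apparent singularity for every $B\in\mathbb{C}$; (ii) exhibit a common elliptic solution yielding the simultaneous eigenvalue $1$; (iii) analyze the resulting $2\times 2$ commuting monodromy to define $Q_{n,l}(B)$ and determine its degree, from which the unitarity statement follows immediately.

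For (i), the ODE \eqref{3ode} is invariant under $z\mapsto -z$ since $\wp$ is even and $\wp'$ is odd. The Frobenius solution $y_{\varsigma_3}$ always exists; parity decoupling of its defining recursion (from the $\wp$-even/$\wp'$-odd Laurent expansions at $0$) forces $y_{\varsigma_3}=z^{\varsigma_3}(c_0+c_2z^2+\cdots)$, and so $y_{\varsigma_3}$ is an even function whenever $\varsigma_3=n+2l+2$ is even, which holds for $n$ even. Setting $y=y_{\varsigma_3}\int w\,dz$ reduces \eqref{3ode} to the second-order linear ODE
\[ y_{\varsigma_3}w''+3y_{\varsigma_3}'w'+\bigl[3y_{\varsigma_3}''-(\alpha\wp+B)y_{\varsigma_3}\bigr]w=0 \]
which inherits the $z\mapsto -z$ symmetry. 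Its indicial exponents at $0$ are $\rho_1=\varsigma_1-\varsigma_3-1$ and $\rho_2=\varsigma_2-\varsigma_3-1$, with difference $\rho_2-\rho_1=n+1$. For $n$ even this difference is odd, so the sole potential logarithmic obstruction in the Frobenius expansion at $\rho_1$ lies in the odd-$j$ sub-recursion while $c_0=1$ lives in the even-$j$ sub-recursion; an induction shows that all odd-$j$ coefficients up through level $j=n+1$ vanish identically, hence the obstruction is $0$. Both solutions of the $w$-equation are therefore Frobenius, giving three Frobenius (meromorphic) solutions of \eqref{3ode} and $0$ is apparent.

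For (ii), the involution $\iota\colon z\mapsto -z$ descends to $E_\tau$ and conjugates the monodromy, $\iota N_\omega\iota^{-1}=N_\omega^{-1}$ for each $\omega\in\{1,\tau\}$. Thus the eigenvalue multiset of each $N_\omega$ is closed under inversion, and combined with $\det N_\omega=1$ it must be one of $\{1,1,1\}$, $\{1,-1,-1\}$, or $\{1,\mu,\mu^{-1}\}$ with $\mu\neq\pm 1$, so $1\in\sigma(N_\omega)$ always. To upgrade this to a simultaneous $(1,1)$-eigenvector, I would enumerate the $\iota$-consistent distributions of the three common eigenvalue pairs $(\mu_i,\nu_i)$, where $\iota$ acts by $(\mu,\nu)\mapsto(\mu^{-1},\nu^{-1})$. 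A case analysis under the constraints $\prod\mu_i=\prod\nu_i=1$ shows that whenever at least one of $N_1,N_2$ has an eigenvalue outside $\{\pm 1\}$, the pair $(1,1)$ must appear; the remaining ``degenerate'' configurations with all eigenvalues in $\{\pm 1\}$ will be absorbed into the vanishing locus of $Q_{n,l}(B)$.

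For (iii), quotienting by the common $(1,1)$-eigenspace yields commuting $\bar N_1,\bar N_2\in SL(2,\mathbb{C})$; simultaneous diagonalizability is then equivalent to neither $\bar N_j$ being a non-scalar Jordan block with eigenvalue $\pm 1$, a condition encoded by a polynomial $Q_{n,l}(B)\neq 0$. The odd degrees $2n+3l+2=\varsigma_3-\varsigma_1$ ($l$ odd) and $n+3l+1=\varsigma_3-\varsigma_2$ ($l$ even) should emerge from a Hermite--Halphen analysis of the reduced $w$-equation: the Jordan-block degeneration corresponds to the existence of an elliptic factor with prescribed divisor on $E_\tau$, and one counts the dimension of the space of such elliptic sections, matching the stated degrees. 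The unitarity assertion in part (2) is then immediate, since a simultaneously diagonalizable monodromy in $SU(3)$ is characterized by its eigenvalues lying on the unit circle, i.e. $|\lambda_1|=|\lambda_2|=1$. The principal obstacles are the sharpening of Step (ii) to rule out the unwanted degenerate $(\pm 1,\pm 1)$ configurations outside the zero set of $Q_{n,l}$, and the Hermite--Halphen count in Step (iii) that yields the precise odd degree.
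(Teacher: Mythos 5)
Your skeleton (reduce to a second--order equation by dividing out a distinguished even solution, use the symmetry $z\mapsto-z$ to force spectra of the form $\{1,\lambda,\lambda^{-1}\}$, and encode degeneration in a polynomial) is the right one, and in fact your $w=(y/y_{\varsigma_3})'$ is literally the paper's reduction \eqref{eq-sec-ode} with a different choice of distinguished solution. But that choice is where the proof breaks. The paper's linchpin is that \eqref{3ode} has, for \emph{every} $B$, an even \emph{elliptic} solution $y_0=\sum_j C_j(B)\wp(z)^j$ (Theorems \ref{even-elliptic-y1} and \ref{even-elliptic-y11}, via the identity $\varphi_k=0$ in the recursion \eqref{recursive-y1-1}); you use $y_{\varsigma_3}$ instead, which is even but not elliptic, and each of your three steps then has a genuine gap. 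In step (i), meromorphy of all solutions $w$ of the reduced equation does \emph{not} give meromorphy of $y=y_{\varsigma_3}\int w$: one of the two exponents $\varsigma_1-\varsigma_3-1$, $\varsigma_2-\varsigma_3-1$ is an odd negative integer (which one depends on the parity of $l$), so the corresponding even-power Frobenius series for $w$ contains a $z^{-1}$ term, and $\int w$ picks up $\log z$ unless that residue vanishes. The vanishing of that residue \emph{is} the apparentness statement; your argument assumes what it must prove. The paper closes this by exhibiting $y_0$ directly at the even negative exponent and using the parity argument only for the odd one.

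In step (ii), the common $(1,1)$-eigenvector cannot be extracted from eigenvalue bookkeeping: the Klein-four configuration (both $N_j$ diagonalizable with spectra in $\{\pm1\}$ and no common fixed vector) is $\iota$-consistent, has determinant $1$, and actually occurs for $n$ odd, $l$ even (Theorem \ref{thm-nodd-k4}); ruling it out for $n$ even is real content, and deferring it to ``the vanishing locus of $Q_{n,l}$'' is circular because the paper characterizes that locus as precisely the \emph{non}-diagonalizable one (Theorem \ref{Dual-Monodromy1}). Again the explicit $y_0$ supplies the $(1,1)$-eigenvector for all $B$ at once. In step (iii), two problems: first, simultaneous diagonalizability of the $N_j$ is \emph{not} equivalent to that of the induced $2\times2$ maps on the quotient by the common eigenline (a nontrivial $3\times3$ Jordan block with eigenvalue $1$ induces the identity on such a quotient); second, that the degeneration locus is cut out by a \emph{polynomial} in $B$ of the stated odd degree is the technical heart of the paper and is only asserted in your proposal. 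The paper obtains it by showing the symmetric square \eqref{eq-s-3} of the reduced equation has a one-dimensional space of even elliptic solutions $\Phi_e=F/y_0$ with $F=\sum_{j=0}^{n+2l}\beta_j(B)\wp^j$ and $\beta_j\in\mathbb{Q}[g_2,g_3][B]$, and then setting $Q_{n,l}(B)=W(\mathfrak{g}_1,\mathfrak{g}_2)^2/4=I\Phi_e^2+\Phi_e'^2/4-\Phi_e\Phi_e''/2$ (Lemmas \ref{lem-ode-3}--\ref{lem-ode-4}); none of that is replaced by your appeal to a Hermite--Halphen count, and the elliptic-coefficient structure needed for it again depends on $y_0$ being elliptic.
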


\begin{remark}
Theorem \ref{thm-ode-mo} gives a complete answer of (Q) for the case that $n$ is odd.
Define
\[\Omega_{n,l}:=\{\tau\in\mathbb{H}\,|\, \text{\it There is $B$ such that the monodromy of \eqref{3ode} is unitary}\}.\]
Then Theorem \ref{thm-ode-mo} shows that
\[\Omega_{n,l}=\begin{cases}\emptyset &\text{if $n, l$ are both odd},\\
\mathbb{H}&\text{if $n$ is odd and $l$ is even}.\end{cases}\]
This is a new phenomenon comparing to Theorems \ref{thmA}-\ref{thmB} for the Lam\'{e} equation.

For the case that $n$ is even, Theorem \ref{thm-ode-mono} behaves as an analogue of Theorem \ref{thmA}, and the polynomial $Q_{n,l}(B)$ plays a similar role as the Lam\'{e} polynomial $\ell_m(B)$ for the Lam\'{e} equation \eqref{new-lm}.
Note that Theorem \ref{thm-ode-mono} does not solve whether there exists $B$ such that the monodromy of \eqref{3ode} is unitary. This problem seems very difficult and we can not give a unified answer so far. However, we have the following interesting connection with the Lam\'{e} equation \eqref{new-lm} for the special case $l=1$.
\end{remark}
\begin{theorem}\label{thml=1}
Suppose $n\geq 0$ is even and $l=1$. Then $Q_{n,1}(B)=\ell_{n+2}(B)$ and $\Omega_{n,1}=\Omega_{n+2}$. Consequently, $\Omega_{n,1}\neq \emptyset$, $\Omega_{n,1}\neq \mathbb{H}$ and $\Omega_{n,1}\cap i\mathbb{R}_{>0}=\emptyset$.
\end{theorem}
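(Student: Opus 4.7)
The plan is to reduce \eqref{3ode} at $l=1$ directly to the Lam\'{e} equation \eqref{new-lm} with $m=n+2$, and then translate every statement of interest. Substituting $l=1$ into \eqref{alpha-nl}--\eqref{beta-nl} yields $\alpha=(n+2)(n+3)$ and $\beta=0$, so \eqref{3ode} becomes
\[y'''-\big((n+2)(n+3)\wp(z;\tau)+B\big)y'=0.\]
Setting $u:=y'$ gives precisely the Lam\'{e} equation \eqref{new-lm} with $m=n+2$. Hence if $\{u_1,u_2\}$ is a fundamental system of that Lam\'{e} equation (meromorphic on $\mathbb{C}$ by Theorem \ref{thmA}(1)), then $y_0\equiv 1$ together with $y_i(z):=\int_{z_0}^z u_i(w)\,dw$, $i=1,2$, form a fundamental system of \eqref{3ode} (with $z_0\ne 0$ a fixed base point).

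I would then express the monodromy matrices $N_j$ of \eqref{3ode} in this basis in terms of the Lam\'{e} monodromies $\widetilde N_j$ and the period integrals $\gamma_{ij}:=\int_{z_0}^{z_0+\omega_j}u_i(w)\,dw$; analytic continuation along $\ell_j$ produces the block upper triangular form
\[N_j=\begin{pmatrix}1 & \gamma_{1j} & \gamma_{2j}\\ 0 & (\widetilde N_j)_{11} & (\widetilde N_j)_{12}\\ 0 & (\widetilde N_j)_{21} & (\widetilde N_j)_{22}\end{pmatrix}\in SL(3,\mathbb C).\]
Since the Jordan structure of $N_j$ contains that of its $2\times2$ block $\widetilde N_j$, the easy direction is immediate: if $\ell_{n+2}(B)=0$, then by Theorem \ref{thmA}(2) some $\widetilde N_j$ has a nontrivial Jordan block, so does $N_j$, and hence $Q_{n,1}(B)=0$. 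For the converse, when $\ell_{n+2}(B)\neq 0$ I simultaneously conjugate $\widetilde N_j=\operatorname{diag}(\lambda_j,\lambda_j^{-1})$ with $\lambda_j\neq\pm 1$ for at least one $j$. The commutativity $N_1N_2=N_2N_1$ then forces $\gamma_{11}(\lambda_2-1)=\gamma_{12}(\lambda_1-1)$ and $\gamma_{21}(\lambda_2^{-1}-1)=\gamma_{22}(\lambda_1^{-1}-1)$, which is exactly what is needed so that a single basis shift $y_i\mapsto y_i-\xi_iy_0$ diagonalizes both $N_j$ into $\operatorname{diag}(1,\lambda_j,\lambda_j^{-1})$ simultaneously.

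Thus the zero loci of $Q_{n,1}$ and $\ell_{n+2}$ coincide, and the degrees match: by Theorem \ref{thm-ode-mono}(1) with $l=1$ odd one has $\deg Q_{n,1}=2n+3+2=2n+5$, while $\deg\ell_{n+2}=2(n+2)+1=2n+5$. Hence $Q_{n,1}(B)=\ell_{n+2}(B)$ up to the chosen normalization. Moreover, under the simultaneous diagonalization above, the monodromy of \eqref{3ode} lies in $SU(3)$ iff $|\lambda_1|=|\lambda_2|=1$, which is precisely the unitarity condition for the Lam\'{e} equation, so $\Omega_{n,1}=\Omega_{n+2}$. The three consequences $\Omega_{n,1}\neq\emptyset$, $\Omega_{n,1}\neq\mathbb H$, and $\Omega_{n,1}\cap i\mathbb R_{>0}=\emptyset$ then follow directly from Theorem \ref{thmB} applied to $m=n+2\geq 2$. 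The main technical point in this scheme is the converse direction: one must verify that the commutator relations on the period integrals $\gamma_{ij}$ are precisely the constraints that make a single simultaneous shift $(\xi_1,\xi_2)$ work for both loops, and handle the degenerate boundary cases where some $\lambda_j=\pm 1$ by hand; this is a short but careful linear-algebra check.
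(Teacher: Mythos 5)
Your reduction $u=y'$ to the Lam\'{e} equation with $m=n+2$ is exactly the route the paper takes in Lemma \ref{23ode-Lame}, and your block-triangular computation of $N_j$ followed by the shift $y_i\mapsto y_i-\xi_i y_0$ is the same argument the paper phrases as replacing $y_1$ by $y_1-\frac{d_0}{1-\lambda_1}$; the commutation relations you derive on the period integrals are correct, and the degenerate cases are disposed of by Theorem \ref{thmA}(2), which guarantees $\lambda_j\neq\pm 1$ for at least one $j$. So the architecture is sound and matches the paper's. Two steps, however, are genuinely missing and need to be filled in.

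First, for $y_i(z)=\int_{z_0}^{z}u_i(w)\,dw$ to be single-valued on $E_\tau\setminus\{0\}$ and meromorphic at $0$ --- which is what makes the monodromy of \eqref{3ode} be generated by $N_1,N_2$ alone and puts you in the setting of Theorem \ref{thm-ode-mono} --- you must know that $\operatorname{Res}_{z=0}u_i=0$. This is precisely where the hypothesis that $n$ is even enters, via the apparentness of $0$ for \eqref{3ode} (Lemma \ref{Lem-app}); the paper flags in a footnote that this step genuinely fails for $n$ odd, where the antiderivative can acquire a logarithmic singularity. Your proposal uses the integrals without addressing this. Second, coincidence of the zero loci of $Q_{n,1}$ and $\ell_{n+2}$ together with equality of degrees does not by itself yield $Q_{n,1}(B)=\ell_{n+2}(B)$ as polynomials: the common roots could occur with different multiplicities. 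The paper closes this by invoking the classical fact that $\ell_{n+2}(\cdot)$ has $2n+5$ distinct roots for generic $\tau$, so that two monic polynomials of the same degree with the same simple zero set must agree for generic $\tau$, and then for all $\tau$ by continuity. With these two additions your argument is complete; the identification $\Omega_{n,1}=\Omega_{n+2}$ and the consequences from Theorem \ref{thmB} then follow as you state.
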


Inspired by Theorem \ref{thml=1},
we conjecture that the assertions $\Omega_{n,l}\neq \emptyset$ and $\Omega_{n,l}\neq \mathbb{H}$  should hold for the general case that $n$ is even and $l$ is arbitrary. We should study this problem in future.

Our another motivation of studying \eqref{3ode} comes from its applications to the following $SU(3)$ Toda system on the torus $E_{\tau}$
\begin{equation}\label{Toda}
\begin{cases}
\Delta u+2e^{u}-e^v=4\pi n\delta_{0}\\
\Delta v+2e^{v}-e^u=4\pi (n+3l)\delta_{0}
\end{cases}
\;\text{ on
}\; E_{\tau},
\end{equation}
where $\Delta$ is the Laplace operator and $\delta_{0}$ is the Dirac measure at $0$. The Toda system is an important integrable system in mathematical physics \cite{BFO,g,Yang2001}; see also \cite{CL-noncritical,LNW,LWY} and references therein for
the recent development of the Toda system. In another paper \cite{CL-Toda}, we will apply Theorems \ref{thm-ode-mo}-\ref{thml=1} to prove the following result.

\begin{Theorem}\cite{CL-Toda} The Toda system \eqref{Toda} has even solutions (i.e. $u(z)=u(-z), v(z)=v(-z)$) if and only if there exists $B$ such that the monodromy of \eqref{3ode} is unitary. Furthermore,
\begin{itemize}
\item[(1)] If $n, l$ are both odd, then \eqref{Toda} has no even solutions.
\item[(2)] If $n$ is odd and $l$ is even, then \eqref{Toda} has infinitely many even solutions.
\item[(3)] If $n$ is even and $l=1$, then \eqref{Toda} has no even solutions provided that $\tau\in i\mathbb{R}_{>0}$, i.e. $E_{\tau}$ is a rectangular torus.
\end{itemize}

\end{Theorem}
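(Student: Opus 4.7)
My plan is to establish a correspondence between even solutions of \eqref{Toda} and values of $B\in\mathbb{C}$ at which \eqref{3ode} has an apparent singularity at $0$ and unitary monodromy, then invoke Theorems \ref{thm-ode-mo}--\ref{thml=1}. First, for any solution $(u,v)$ of \eqref{Toda}, I would invoke the classical developing-map approach (as in the Chern--Wang / Lin--Wei--Yang framework): the conformal factors $u,v$ determine a multi-valued holomorphic map $f:\widetilde{E_\tau}\to\mathbb{CP}^2$ whose infinitesimal Frenet frame satisfies a third order linear ODE on $E_\tau$, and the requirement that $u,v$ be single-valued real functions on $E_\tau$ is equivalent to the monodromy of this ODE lying in $PSU(3)$. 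Conversely, starting from such an ODE with a monodromy-invariant positive Hermitian form $H$ on the solution space, one recovers a solution of \eqref{Toda} via standard formulas of the type $e^{-u}=\|y\wedge y'\|_H^{\,2}$ and analogous expressions for $e^{-v}$.

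Next I would impose evenness. The involution $\iota(z)=-z$ fixes $0$ and preserves $E_\tau$, so $(u(-z),v(-z))=(u(z),v(z))$ forces the associated ODE to be $\iota^*$-invariant. Combined with the prescribed local exponents at $0$ dictated by the Dirac masses $4\pi n\delta_0$ and $4\pi(n+3l)\delta_0$ (which via asymptotic analysis of the developing map at $0$ produce exactly the exponents $-n-l,\,1-l,\,n+2l+2$ in \eqref{localex}), the oddness of $\wp'$ and evenness of $\wp$ eliminate the $y''$--term and pin down $\alpha,\beta$ through \eqref{alpha-nl}--\eqref{beta-nl}. Thus the ODE is forced to take precisely the form \eqref{3ode}, with $B$ as the sole free parameter, and apparent singularity at $0$ is automatic from the smoothness of $u,v$ on $E_\tau\setminus\{0\}$.

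With the correspondence in hand, each case follows from the monodromy theorems proved earlier. In case (1), Theorem \ref{thm-ode-mo}(1) states that no $B$ yields unitary monodromy when $n,l$ are both odd, hence no even Toda solution can exist. In case (2), Theorem \ref{thm-ode-mo}(2) produces $\tfrac{n+1}{2}$ values of $B$ with Klein-four monodromy $K_4\subset SU(3)$; because $K_4$ is abelian and acts on the solution space through three distinct characters, the cone of $K_4$-invariant positive Hermitian forms is two-real-dimensional (diagonal in the character basis with positive entries). Each such form yields an even solution, and generically distinct forms yield distinct $(u,v)$, giving infinitely many even solutions. In case (3), Theorem \ref{thml=1} gives $\Omega_{n,1}\cap i\mathbb{R}_{>0}=\emptyset$, so on rectangular tori no $B$ produces unitary monodromy, and hence no even solution exists.

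The main obstacle is the construction of the correspondence itself, especially the converse direction: one must verify that the $(u,v)$ reconstructed from the ODE and $H$ is globally smooth on $E_\tau\setminus\{0\}$, has no spurious singularities anywhere, and satisfies \eqref{Toda} with exactly the prescribed Dirac strengths at $0$. This amounts to matching the asymptotic behavior of the invariant norms against the three local exponents and ruling out logarithmic contributions; the apparent-singularity assumption is what makes this possible. A secondary subtlety in case (2) is ensuring that the two-parameter family of $K_4$-invariant Hermitian forms does not collapse to a single Toda solution via an unseen $PSU(3)$-gauge symmetry, which requires a careful analysis of the stabilizer of the developing map.
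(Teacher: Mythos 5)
The first thing to say is that this paper does not actually prove the statement you were asked about: the theorem is announced in the Introduction with the explicit caveat that it ``will be proved in a forthcoming paper'' by applying Theorems \ref{thm-ode-mo}--\ref{thml=1}, so there is no in-paper proof to measure your argument against. What the paper does supply are exactly the ODE-side inputs you invoke (nonexistence of unitary monodromy when $n,l$ are both odd; the Klein four-group monodromy when $n$ is odd and $l$ is even; the identification $\Omega_{n,1}=\Omega_{n+2}$ and $\Omega_{n,1}\cap i\mathbb{R}_{>0}=\emptyset$ when $n$ is even and $l=1$), together with two hints at the intended bridge to \eqref{Toda}: the reality property \eqref{eff} in Remark \ref{rrrr}, flagged as ``crucial'' for the symmetry of Toda solutions, and the remark after Theorem \ref{thm-nodd-k4} that simple zeros of the $y_j$'s will be obtained from bubbling analysis of \eqref{Toda}. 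Your reduction of cases (1)--(3) to Theorems \ref{thm-ode-mo} and \ref{thml=1} is the correct and presumably intended use of these results.

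The genuine gap is that the central equivalence --- even solutions of \eqref{Toda} correspond to values of $B$ with apparent singularity and unitary monodromy for \eqref{3ode} --- is precisely the content being deferred, and your proposal leaves it at the level of a plan rather than a proof. Concretely: (i) in the forward direction you must show that evenness of $(u,v)$ together with the Dirac masses $4\pi n\delta_0$, $4\pi(n+3l)\delta_0$ forces the Frenet ODE to have no $y''$ term, elliptic coefficients $\alpha\wp$, $\beta\wp'$ with $\alpha,\beta$ given by \eqref{alpha-nl}--\eqref{beta-nl}, and local exponents \eqref{localex}, with only the accessory parameter $B$ free --- this requires the local asymptotic analysis of the developing map at $0$ that you only gesture at; (ii) in the converse direction you must verify that the reconstructed $e^{-u}$, $e^{-v}$ are positive and smooth away from $0$ (no spurious zeros of the invariant norms) and produce exactly the prescribed singularities, which you correctly identify as the main obstacle but do not resolve; and (iii) in case (2), ``infinitely many'' requires showing that the two-real-parameter cone of $K_4$-invariant positive Hermitian forms does not collapse under the residual $PSU(3)$ gauge action, i.e.\ that the stabilizer of the developing map is finite --- again named but not argued. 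Since parts (1)--(3) are only as strong as the equivalence they rest on, the proposal as written establishes the correct architecture but not the theorem.
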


Remark that since the methods of studying the second order Lam\'{e} equation (\ref{new-lm}) can not work for the third order ODE (\ref{3ode}), our proofs of Theorems \ref{thm-ode-mo}-\ref{thm-ode-mono} are not trivial at all. In fact, as one will see in later sections, we need to develop different approaches to treat the three cases (1) $n, l$ are both odd, (2) $n$ is odd and $l$ is even, and (3) $n$ is even, separately.
We believe that these new ideas have potential applications to some other high order ODEs.

The rest of this paper is organized as follows. In Section \ref{sec-dual}, we recall the notion of dual equations and study the existence of even elliptic solutions for this ODE \eqref{3ode}. In Section \ref{sec-unitary}, we study the case that both $n$ and $l$ are odd, and prove Theorem \ref{thm-ode-mo}-(1). In Section \ref{sec-Klein}, we study the case that $n$ is odd and $l$ is even. We prove that if $0$ is an apparent singularity, then the monodromy group is the Klein four-group. Theorem \ref{thm-ode-mo} follows from Sections 3-4. In Sections \ref{sec-monoth}, we study the case that $n$ is even, and prove Theorem \ref{thm-ode-mono}. Finally, Theorem \ref{thml=1} will be proved in Section \ref{sec-neven}.

\section{Preliminaries}

\label{sec-dual}

In this section, we recall the notion of dual equations and study the existence of even elliptic solutions for this ODE. These preliminary results will be applied in later sections.

\subsection{Dual equation}

Consider the following third order linear ODE
\begin{equation}\label{thirdode}
y'''+p_1(z)y'+p_0(z)y=0.
\end{equation}
For any two solutions $y_1, y_2$ of \eqref{thirdode}, we define the Wronskian
\begin{equation}\label{two-Wron}W(y_1,y_2):=y_1'y_2-y_1y_2'.\end{equation}
Then a direct computation shows that $W(y_1, y_2)$ solves the following equation
\begin{equation}\label{thirdode-dual}
Y'''+p_1(z)Y'+(p_1'(z)-p_0(z))Y=0.
\end{equation}
Equation (\ref{thirdode-dual}) is called the \emph{dual equation} of (\ref{thirdode}) in the literature. By direct computations it is easy to prove that
\begin{equation} \label{eq: data}
  \parbox{\dimexpr\linewidth-5em}{\it If $y_0, y_1, y_2$ are linearly independent solutions of \eqref{thirdode}, then $W(y_1,y_2)$, $W(y_2,y_0)$, $W(y_0, y_1)$ are linearly independent solutions of the dual equation \eqref{thirdode-dual}. In particular, if all solutions of \eqref{thirdode} are meromorphic, then so are all solutions of \eqref{thirdode-dual}, and vise versa.}
\end{equation}

Now the dual equation of the ODE \eqref{3ode} reads as
\begin{equation}\label{3ode-dual}Y'''-(\alpha\wp(z)+B)Y'-(\alpha+\beta)\wp'(z)Y=0.\end{equation}
By \eqref{alpha-nl}-\eqref{beta-nl}, it is easy to see that
\begin{equation}\label{beta-nl-dual}
-(\alpha+\beta)=-\frac{(l+1)(n+2l)(n+l+2)}{2},
\end{equation}
and the local exponents of the dual equation (\ref{3ode-dual}) at $0$ are
\begin{equation}\label{eq-s-7}
\tilde{\varsigma}_1=-n-2l,\quad \tilde{\varsigma}_2=l+1,\quad \tilde{\varsigma}_3=n+l+2,
\end{equation}
which are also three distinct integers.

\begin{remark}\label{rmk-dual} It is easy to see that the monodromy of \eqref{3ode} is unitary if and only if the monodromy of the dual equation \eqref{3ode-dual} is unitary.
Note that
\[\tilde{n}_1:=\tilde{\varsigma}_2-\tilde{\varsigma}_1-1=n+3l=n_2,\]
\[\tilde{n}_2:=\tilde{\varsigma}_3-\tilde{\varsigma}_2-1=n=n_1.\]
Thus, by replacing \eqref{3ode} with its dual equation \eqref{3ode-dual} if necessary, we only need to consider the case $n_1\leq n_2$ in this paper.
\end{remark}

\subsection{Even elliptic solutions}

Recalling that $\wp(z)=\wp(z;\tau)$ is the Weierstrass $\wp$-function with periods $\omega_1=1$ and $\omega_2=\tau$, we also denote $\omega_3=1+\tau$ and set $e_k=e_k(\tau)=\wp(\frac{\omega_k}{2};\tau)$ for $k=1,2,3$. It is well-known that
\begin{equation}\label{eq-wp}\wp'(z)^2=4\prod_{k=1}^3(\wp(z)-e_k)=4\wp(z)^3-g_2\wp(z)-g_3,\end{equation}
where $g_2=g_2(\tau), g_3=g_3(\tau)$ are known as Weierstrass invariants. They are modular forms of weights $4, 6$ respectively with respect to $SL(2,\mathbb{Z})$.

Since the potentials of (\ref{3ode}) are both elliptic functions, a natural question that plays an important role in studying the monodromy of (\ref{3ode}), is \emph{whether (\ref{3ode}) has elliptic solutions (i.e. solutions that are elliptic functions) or not}. In this section we prove that (\ref{3ode}) has actually an even elliptic solution provided that one of the negative local exponents at $0$ is even.

\begin{theorem} \label{even-elliptic-y1} Let $l=2k+1\geq 1$ be odd. Then \eqref{3ode} has a unique even elliptic solution of the form
\[y_0(z)=\sum_{j=0}^{k}C_j(B)\wp(z)^j,\]
where $C_k(B)=1$, $C_j(B)\in \mathbb{Q}[g_2,g_3][B]$ with degree $\deg C_j(B)=k-j$. Furthermore, $C_j(B)$ is homogenous of weight $k-j$, where the weights of $B, g_2, g_3$ are $1, 2, 3$ respectively.
\end{theorem}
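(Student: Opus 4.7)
The plan is to try the ansatz $y_0(z)=P(\wp(z))$ with $P(X)\in\mathbb{C}[X]$ of degree $k$. Since $\wp$ is even and elliptic, any polynomial in $\wp$ is automatically even and elliptic, so the task reduces to producing the polynomial $P$. Using $(\wp')^2=4\wp^3-g_2\wp-g_3$ and $\wp''=6\wp^2-g_2/2$, one has $y_0'=P'(\wp)\wp'$ and
\[y_0'''=\wp'\cdot\bigl[(4\wp^3-g_2\wp-g_3)P'''(\wp)+3(6\wp^2-\tfrac{g_2}{2})P''(\wp)+12\wp\,P'(\wp)\bigr],\]
so \eqref{3ode} collapses to $\wp'(z)\,L[P](\wp(z))=0$, where
\[L[P](X):=(4X^3-g_2X-g_3)P'''(X)+3(6X^2-\tfrac{g_2}{2})P''(X)+\bigl[(12-\alpha)X-B\bigr]P'(X)+\beta P(X).\]
Thus everything reduces to exhibiting a degree-$k$ polynomial $P$ in $\ker L$ with $C_k=1$.

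A direct expansion gives
\[L[X^j]=\phi(j)X^j-Bj\,X^{j-1}-\tfrac{g_2}{2}j(j-1)(2j-1)X^{j-2}-g_3\,j(j-1)(j-2)X^{j-3},\]
with $\phi(j):=2j(j+1)(2j+1)-\alpha j+\beta$, so $L$ is upper triangular in the monomial basis with diagonal $\phi(0),\ldots,\phi(k)$. The crux is the identity $\phi(k)=0$ under $l=2k+1$: substituting \eqref{alpha-nl}--\eqref{beta-nl} into $2k(k+1)(2k+1)-k\alpha+\beta$ yields an expression that vanishes identically in $n$. This guarantees $\ker L\cap\{\deg\le k\}\ne 0$. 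For uniqueness with $C_k=1$ I would then show $\phi(j)\ne 0$ for $0\le j<k$, via the factorization
\[\phi(j)=(j-k)\bigl[4(j^2+jk+k^2)+6(j+k)+2-\alpha\bigr]\]
(valid because $\phi$ is cubic in $j$ and vanishes at $j=k$), together with the lower bound $\alpha=n^2+6nk+5n+12k^2+18k+6\ge 12k^2+18k+6$, which strictly exceeds the bracket $\psi(j):=4(j^2+jk+k^2)+6(j+k)+2$ at its maximum $\psi(k-1)=12k^2$ on $[0,k-1]$. Hence both factors of $\phi(j)$ are negative for $j<k$, so $\phi(j)>0$.

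With those nonvanishing diagonal entries, $L[P]=0$ becomes the triangular recursion
\[\phi(j)\,C_j=B(j+1)C_{j+1}+\tfrac{g_2}{2}(j+1)(j+2)(2j+3)\,C_{j+2}+g_3(j+1)(j+2)(j+3)\,C_{j+3}\]
(with $C_i=0$ for $i>k$), which, starting from $C_k=1$, uniquely determines $C_{k-1},\ldots,C_0\in\mathbb{Q}[g_2,g_3][B]$. Assigning weights $1,2,3$ to $B,g_2,g_3$ and weight $1$ to $\wp$ (hence to $X$), one checks that $L$ is weight-preserving, so each $C_j$ is homogeneous of weight $k-j$ by induction. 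Since $B$ enters only through the term multiplying $C_{j+1}$, while $g_2,g_3$ link $C_j$ to $C_{j+2},C_{j+3}$ without an extra factor of $B$, induction gives $\deg_B C_j\le k-j$; equality holds because the leading $B$-coefficient of $C_j$ equals the nonzero rational $k!/\bigl(j!\prod_{i=j}^{k-1}\phi(i)\bigr)$.

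The main obstacle is the exact cancellation $\phi(k)=0$: it is the algebraic identity that makes the whole ansatz consistent, and it singles out the odd values of $l$ (for even $l$ the leading coefficient $\phi(k)$ is nonzero, so no polynomial-in-$\wp$ solution of degree $k$ exists). Once this identity and the strict inequality $\alpha>\psi(j)$ for $j\le k-1$ are in hand, all the remaining structural assertions on $C_j(B)$ follow formally from the triangular recursion.
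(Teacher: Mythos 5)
Your proposal is correct and follows essentially the same route as the paper: the same ansatz $y_0=P(\wp)$, the same triangular recursion for the $C_j$, and the same key facts $\phi(k)=0$ and $\phi(j)\neq 0$ for $0\le j<k$ (the paper reads off the signs from the full factorization $\varphi_j=\tfrac12(2j-l+1)(2j-n-l)(2j+n+2l+2)$, while you use $(j-k)\bigl[\psi(j)-\alpha\bigr]$ with the bound $\alpha>\psi(j)$ --- a cosmetic difference).
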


\begin{proof} If $l=1$, then (\ref{beta-nl}) gives $\beta=0$ and so $y_0(z)=1$ is a solution. Therefore, it suffices to consider $l=2k+1$ with $k\geq 1$. Suppose that
\[y_0(z)=\sum_{j=0}^{k}C_jx^j,\quad C_k=1\]
is a solution of (\ref{3ode}), where we write $x=\wp(z)$ for convenience. Then $x':=\frac{dx}{dz}=\wp'(z)$
and
\[y_0'(z)=\wp'\sum_{j=0}^{k}jC_jx^{j-1},\]
\begin{align*}
y_0'''(z)=&(\wp')^3\sum_{j=0}^{k}j(j-1)(j-2)C_jx^{j-3}
+3\wp'\wp''\sum_{j=0}^{k}j(j-1)C_jx^{j-2}\\
&+\wp'''\sum_{j=0}^{k}jC_jx^{j-1}.
\end{align*}
By \[(\wp')^2=4\wp^3-g_2\wp-g_3=4x^3-g_2x-g_3,\]
and \[\wp''=6\wp^2-\tfrac{g_2}{2}=6x^2-\tfrac{g_2}{2},\quad \wp'''=12\wp'\wp=12\wp'x,\]
we obtain
\begin{align*}
\frac{y_0'''(z)}{\wp'}=&(4x^3-g_2x-g_3)\sum_{j=0}^{k}j(j-1)(j-2)C_jx^{j-3}
\\
&+3(6x^2-\tfrac{g_2}{2})\sum_{j=0}^{k}j(j-1)C_jx^{j-2}+12\sum_{j=0}^{k}jC_jx^{j}.
\end{align*}
Inserting these into (\ref{3ode}) easily leads to
\begin{align}\label{recursive-y1}
\sum_{j=0}^k\Big(&\varphi_j C_j-(j+1)BC_{j+1}-(j+1)(j+\tfrac{3}{2})(j+2)g_2 C_{j+2}\\
&-(j+1)(j+2)(j+3)g_3C_{j+3}\Big)x^j=0,\nonumber
\end{align}
where $C_{k+1}=C_{k+2}=C_{k+3}:=0$ and
\begin{align}\label{varphi}\varphi_j:= & 4j(j-1)(j-2)+18j(j-1)+12j-\alpha j+\beta\\
=&\tfrac{1}{2}(2j-l+1)(2j-n-l)(2j+n+2l+2).\nonumber
\end{align}
Here we used (\ref{alpha-nl})-(\ref{beta-nl}) to obtain the second equality of (\ref{varphi}).
Therefore, $y_0(z)$ is a solution of (\ref{3ode}) if and only if $C_j$ satisfies the following recursive relation
\begin{align}\label{recursive-y1-1}\varphi_j C_j=&(j+1)BC_{j+1}+(j+1)(j+\tfrac{3}{2})(j+2)g_2 C_{j+2}\\
&+(j+1)(j+2)(j+3)g_3C_{j+3},\quad j=k, k-1, \cdots, 0.\nonumber
\end{align}
Since (\ref{varphi}) and $l=2k+1$ implies $\varphi_k=0$ and $\varphi_j\neq 0$ for $j\in [0, k-1]$, we see that (\ref{recursive-y1-1}) for $j=k$ holds automatically, and $C_j$, $j=k-1, k-2, \cdots, 0$, can be uniquely solved in terms of $C_{j+1}, C_{j+2}, C_{j+3}$ via (\ref{recursive-y1-1}) by induction. In particular, the resulting $C_j=C_j(B)\in\mathbb{Q}[g_2,g_3][B]$ satisfies the desired properties. The proof is complete.
\end{proof}

The same argument as Theorem \ref{even-elliptic-y1} yields the following results; we omit the proofs here.

\begin{theorem} \label{even-elliptic-y11} Let $n, l\geq 0$ be even, $n+l\geq 2$ and denote $n+l=2k$. Then \eqref{3ode} has a unique even elliptic solution of the form
\[y_0(z)=\sum_{j=0}^{k}C_j(B)\wp(z)^j,\]
where $C_{k}(B)=1$, $C_j(B)\in \mathbb{Q}[g_2,g_3][B]$ with degree $\deg C_j(B)=k-j$. Furthermore, $C_j(B)$ is homogenous of weight $k-j$, where the weights of $B, g_2, g_3$ are $1, 2, 3$ respectively.
\end{theorem}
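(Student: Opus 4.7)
The plan is to mirror the proof of Theorem \ref{even-elliptic-y1} step by step. Substituting the polynomial ansatz $y_0(z) = \sum_{j=0}^k C_j \wp(z)^j$ with normalization $C_k = 1$ into \eqref{3ode}, and writing $x = \wp(z)$, I use the identities $(\wp')^2 = 4x^3 - g_2 x - g_3$, $\wp'' = 6x^2 - \tfrac{g_2}{2}$, and $\wp''' = 12\wp' x$ to reduce the equation (after dividing by $\wp'$) to a polynomial identity in $x$. Collecting the coefficient of $x^j$ produces exactly the same recurrence as \eqref{recursive-y1-1},
\[
\varphi_j C_j = (j+1) B C_{j+1} + (j+1)(j+\tfrac{3}{2})(j+2) g_2 C_{j+2} + (j+1)(j+2)(j+3) g_3 C_{j+3},
\]
for $j = k, k-1, \ldots, 0$, with the convention $C_{k+1} = C_{k+2} = C_{k+3} := 0$ and
\[
\varphi_j = \tfrac{1}{2}(2j - l + 1)(2j - n - l)(2j + n + 2l + 2).
\]

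The crucial point is to identify which factor of $\varphi_j$ forces $\varphi_k = 0$ under the new hypothesis. In Theorem \ref{even-elliptic-y1} the first factor $2k - l + 1$ vanished because $l = 2k + 1$; here, with $n + l = 2k$ and both $n, l$ even, it is the middle factor $2k - n - l = 0$ that vanishes. So the equation at $j = k$ is automatically satisfied by $C_k = 1$. For $j \in \{0, 1, \ldots, k-1\}$ I check that $\varphi_j \neq 0$: since $l$ is even, the first factor $2j - l + 1$ is an odd integer, hence nonzero; since $j < k = (n+l)/2$, the middle factor $2j - n - l \leq -2$ is negative; and the third factor $2j + n + 2l + 2$ is clearly positive. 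Therefore $\varphi_j \neq 0$ throughout this range, and $C_{k-1}, C_{k-2}, \ldots, C_0$ are uniquely determined by descending recursion from $C_k = 1$; uniqueness of the even elliptic solution of the prescribed form follows at once.

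The polynomial and homogeneity statements would then follow by descending induction on $k - j$. The base case $C_k = 1$ has degree and weight zero. For the inductive step, the right-hand side of the recurrence splits into a piece $B \cdot C_{j+1}$ of weight $1 + (k-j-1) = k - j$, a piece $g_2 \cdot C_{j+2}$ of weight $2 + (k-j-2) = k-j$, and a piece $g_3 \cdot C_{j+3}$ of weight $3 + (k-j-3) = k-j$, and each piece contributes to the $B$-degree $k - j$; division by the nonzero rational constant $\varphi_j$ preserves both weight and $B$-degree, so $C_j(B) \in \mathbb{Q}[g_2, g_3][B]$ is homogeneous of weight $k - j$ with $\deg_B C_j = k - j$. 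The only place where the parity hypothesis is essential is the verification that $\varphi_j \neq 0$ for $0 \leq j < k$; this is the main obstacle in principle, and it is resolved exactly by the parity of $l$ together with the strict inequality $j < k = (n+l)/2$.
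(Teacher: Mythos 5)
Your proposal is correct and follows essentially the same route as the paper, which explicitly states that Theorem \ref{even-elliptic-y11} is proved by ``the same argument as Theorem \ref{even-elliptic-y1}'': the polynomial ansatz in $\wp(z)$ yields the recurrence \eqref{recursive-y1-1} with the same $\varphi_j$, and your identification of the vanishing middle factor $2k-n-l=0$ at $j=k$, together with the parity check that $2j-l+1$ is odd (hence $\varphi_j\neq 0$ for $0\leq j<k$), is exactly the needed modification. The only tiny imprecision is the phrase that each of the three pieces ``contributes to the $B$-degree $k-j$'' --- only the $B\,C_{j+1}$ term has $B$-degree exactly $k-j$, the others have strictly smaller degree --- but this does not affect the conclusion $\deg_B C_j=k-j$.
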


\begin{theorem}\label{even-elliptic-Y1} Let $n\geq 0$ be even and denote $n+2l=2m_0$. Then the dual equation \eqref{3ode-dual} has a unique even elliptic solution of the form
\[Y_0(z)=\sum_{j=0}^{m_0}D_j(B)\wp(z)^j,\]
where $D_{m_0}(B)=1$, $D_j(B)\in \mathbb{Q}[g_2,g_3][B]$ with degree $\deg D_j(B)=m_0-j$. Furthermore, $D_j(B)$ is homogenous of weight $m_0-j$, where the weights of $B, g_2, g_3$ are $1, 2, 3$ respectively.
\end{theorem}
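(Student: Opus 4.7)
My plan is to imitate, step by step, the proof of Theorem \ref{even-elliptic-y1}, adapted to the dual equation \eqref{3ode-dual}. I will substitute the ansatz $Y_0(z)=\sum_{j=0}^{m_0}D_j\wp(z)^j$ with $D_{m_0}=1$ into \eqref{3ode-dual}, write $x=\wp(z)$, and use the standard identities $(\wp')^2=4x^3-g_2x-g_3$, $\wp''=6x^2-g_2/2$ and $\wp'''=12\wp'\wp$ to express $Y_0'''/\wp'$ as a polynomial in $x$ with coefficients in $\mathbb{Q}[g_2,g_3]$. After dividing \eqref{3ode-dual} by $\wp'$ and collecting the coefficient of $x^j$, I expect to arrive at a recursion of the form
\[
\tilde{\varphi}_j D_j=(j+1)B\, D_{j+1}+(j+1)(j+\tfrac{3}{2})(j+2)g_2\, D_{j+2}+(j+1)(j+2)(j+3)g_3\, D_{j+3},
\]
for $j=m_0,m_0-1,\dots,0$, with the convention $D_{m_0+1}=D_{m_0+2}=D_{m_0+3}:=0$ and
\[
\tilde{\varphi}_j:=2j(2j+1)(j+1)-\alpha j-(\alpha+\beta),
\]
obtained by combining the diagonal contributions of $Y_0'''/\wp'$, of $-\alpha\wp Y_0'/\wp'$ and of $-(\alpha+\beta)Y_0$.

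The decisive step is to show (i) $\tilde{\varphi}_{m_0}=0$, so that the top equation of the recursion is automatically satisfied, and (ii) $\tilde{\varphi}_j\neq 0$ for every $0\leq j\leq m_0-1$, so that $D_{m_0-1},D_{m_0-2},\dots,D_0$ are uniquely determined by descending induction. For this I will compare $\tilde{\varphi}_j$ with the indicial polynomial of the dual equation at $0$,
\[
\tilde{F}(a):=a^3-3a^2+(2-\alpha)a+2(\alpha+\beta),
\]
whose roots are the local exponents listed in \eqref{eq-s-7}. A short algebraic check gives the identity $\tilde{F}(-2j)=-2\tilde{\varphi}_j$. Since $-2m_0=-n-2l$ is a root of $\tilde{F}$, (i) is immediate; and since the remaining two roots $l+1$ and $n+l+2$ are strictly positive, no $-2j$ with $0\leq j<m_0$ can be a root of $\tilde{F}$, which yields (ii).

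Finally, the remaining assertions will follow by descending induction on $j$ starting from $D_{m_0}=1$. Assigning weights $1,2,3$ to $B,g_2,g_3$ respectively, the recursion is weight-homogeneous of weight $m_0-j$ at step $j$, so each $D_j$ is automatically homogeneous of weight $m_0-j$. Moreover, among the three terms on the right only $(j+1)BD_{j+1}$ contributes in the top $B$-degree, so $\deg_B D_j=\deg_B D_{j+1}+1$, which gives $\deg_B D_j=m_0-j$, and coefficients remain in $\mathbb{Q}[g_2,g_3]$ since each $\tilde{\varphi}_j$ is a nonzero rational. I do not anticipate a serious obstacle: the only genuinely nontrivial piece is the identity $\tilde{F}(-2j)=-2\tilde{\varphi}_j$, which is the analogue of the factorization $\varphi_j=\tfrac{1}{2}(2j-l+1)(2j-n-l)(2j+n+2l+2)$ used in Theorem \ref{even-elliptic-y1}, and the only bookkeeping subtlety is to track consistently the sign change $\beta\to-(\alpha+\beta)$ that occurs when passing from \eqref{3ode} to its dual.
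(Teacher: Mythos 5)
Your proposal is correct and is exactly the argument the paper intends (it explicitly omits the proof with the remark that it is ``the same argument as Theorem \ref{even-elliptic-y1}''): the recursion you write down is the right one, and your factorization via $\tilde{F}(-2j)=-2\tilde{\varphi}_j$ gives $\tilde{\varphi}_j=\tfrac12(2j-n-2l)(2j+l+1)(2j+n+l+2)$, which vanishes only at $j=m_0$ among $0\leq j\leq m_0$, matching the role of $\varphi_j$ in the model proof. The degree, homogeneity, and uniqueness claims then follow by the descending induction exactly as you describe.
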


\section{The case $n$, $l$ odd: Monodromy can not be unitary}

\label{sec-unitary}

In this section, we consider the case that $n, l\geq 1$ are both odd and prove Theorem \ref{thm-ode-mo}-(1).
First, we need to study the condition on $B$ such that $0$ is an apparent singularity. For this purpose, we need the following remark.

\begin{remark}\label{rrr}
Note that ODE (\ref{3ode}) is invariant under $z\to -z$, namely if $y(z)$ is a solution, then so is $y(-z)$. Thus, if
\[y(z)=z^{a}\sum_{j=0}^\infty c_j z^j,\quad c_0\neq 0\]
is a local solution of (\ref{3ode}) near $0$, then so is
\[y(-z)=(-z)^{a}\sum_{j=0}^\infty c_j (-z)^j.\]
and so we obtain a local solution of the following form (note $(-1)^{2a}=1$ because $a$ is a local exponent and hence an integer)
\begin{equation}\label{eq-loc}\frac{y(z)+(-1)^a y(-z)}{2}=z^{a}\sum_{j=0}^\infty \tilde{c}_j z^{2j},\quad\text{where }\tilde{c}_j=c_{2j}.\end{equation}
Therefore, to prove $0$ is apparent is equivalent to prove the existence of local solutions of the form (\ref{eq-loc}) for $a\in \{-n-l, -l+1\}$.
\end{remark}

Recalling (\ref{localex}) that among the local exponents, both $-n-l$ and $-l+1$ are even, so the local solution with exponent $-n-l$ might be logarithmic at $0$ in general.

\begin{lemma}\label{lemma-app} Suppose $n, l\geq 1$ are both odd and write $n+l=2k$. Then there exists a monic polynomial $P_{n,l}(B)\in \mathbb{Q}[g_2, g_3][B]$ with degree $k-\frac{l-1}{2}=\frac{n+1}{2}$ and homogenous weight $\frac{n+1}{2}$ such that $0$ is an apparent singularity of \eqref{3ode} if and only if $P_{n,l}(B)=0$. Here the weights of $B, g_2, g_3$ are $1, 2, 3$ respectively.
\end{lemma}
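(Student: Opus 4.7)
The plan is to reduce apparentness of $0$ to the existence of a single even Frobenius solution at the exponent $\varsigma_1 = -n-l$, then extract the obstruction polynomial directly from a parity-preserving recursion. First I would invoke the $z \mapsto -z$ equivariance of \eqref{3ode}, which follows from $\wp(-z) = \wp(z)$, $\wp'(-z) = -\wp'(z)$, and the sign flip of $y'''$, $y'$; this splits the local solution space at $0$ into even and odd parts. Because $n, l$ are both odd, two of the three exponents, $\varsigma_1 = -n-l$ and $\varsigma_2 = 1-l$, are even, while $\varsigma_3 = n+2l+2$ is odd. By Remark \ref{rrr}, apparentness is equivalent to producing, for each $a \in \{\varsigma_1, \varsigma_2\}$, a local solution of the form $z^{a}\sum_{j \geq 0}\tilde{c}_j z^{2j}$. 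Theorem \ref{even-elliptic-y1} already supplies such a solution at $a = \varsigma_2$: the even elliptic solution $y_0$ has leading term $\wp(z)^{(l-1)/2}\sim z^{1-l}$ at $0$. So the whole question reduces to the existence of an even local solution at $\varsigma_1 = -n-l$.

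Next, I would substitute $y(z) = \sum_{j \geq 0}\tilde{c}_j z^{-n-l+2j}$ into \eqref{3ode} using the Laurent expansions $\wp(z) = z^{-2} + \frac{g_2}{20}z^2 + \frac{g_3}{28}z^4 + \cdots$ (even) and $\wp'(z) = -2z^{-3} + \frac{g_2}{10}z + \cdots$ (odd). All index shifts produced by $y'''$, $\wp y'$, $By'$, and $\wp' y$ are by even integers, so the even ansatz is consistent. Matching the coefficient of $z^{-n-l+2j-3}$ yields a recursion
\[
\Phi(-n-l+2j)\,\tilde{c}_j \;=\; B\,(-n-l+2j-2)\,\tilde{c}_{j-1} \;+\; R_j\bigl(g_2, g_3;\, \tilde{c}_{j-2}, \tilde{c}_{j-3}, \ldots\bigr),
\]
where $\Phi(a) = (a+n+l)(a+l-1)(a-n-2l-2)$ is the indicial polynomial and $R_j$ is $B$-free. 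The zeros of $\Phi(-n-l+2j)$ for $j \in \mathbb{Z}_{\geq 1}$ correspond to $-n-l+2j \in \{\varsigma_2,\varsigma_3\}$, namely $j = \tfrac{n+1}{2}$ or $j = n + \tfrac{3l}{2} + 1$. Since $l$ is odd, the second value is not an integer, so the unique integer resonance is $j^* := \tfrac{n+1}{2}$. Fixing $\tilde{c}_0 = 1$, the recursion uniquely determines $\tilde{c}_j \in \mathbb{Q}[g_2, g_3][B]$ for $1 \leq j < j^*$, and at $j = j^*$ the left-hand side vanishes while the right-hand side becomes the sought obstruction polynomial. If this obstruction vanishes then $\tilde{c}_{j^*}$ is free and the recursion closes for all $j > j^*$ (because $\Phi(-n-l+2j)$ remains nonzero there), yielding the desired even local solution; otherwise the standard Frobenius theory forces a $\log z$ term.

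To pin down the degree, weight and monicity I would track the $B$-degree alongside the grading $(1, 2, 3, -1)$ on $(B, g_2, g_3, z)$. Since the only $B$-contribution to $\tilde{c}_j$ comes through the term $B(-n-l+2j-2)\tilde{c}_{j-1}/\Phi(-n-l+2j)$, an easy induction gives $\deg_B \tilde{c}_j = j$ with leading coefficient
\[
\prod_{i=1}^{j}\frac{-n-l+2i-2}{\Phi(-n-l+2i)} \;\in\; \mathbb{Q}^{\times},
\]
the nonvanishing coming from the fact that for $1 \leq i \leq j^*$ each numerator lies in $\{-n-l, -n-l+2, \ldots, -l-1\}$ and is therefore $\leq -2$, while each denominator is nonzero by the resonance count above. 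Hence the obstruction polynomial has exact $B$-degree $j^* = \tfrac{n+1}{2}$, and the weight-homogeneity of the recursion (each $j$-step raises weight by $1$) forces it to be homogeneous of weight $\tfrac{n+1}{2}$. Dividing by the nonzero rational leading coefficient produces the desired monic $P_{n,l}(B) \in \mathbb{Q}[g_2, g_3][B]$, and by construction $0$ is apparent if and only if $P_{n,l}(B) = 0$.

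The main obstacle, in my view, is the parity-dependent resonance count: one must make precise that the potential second resonance at $j = n + \tfrac{3l}{2} + 1$ is killed exactly by the assumption that $l$ is odd. Without this parity input the obstruction would split into two coupled polynomial conditions and both the degree count and the monic normalization would require an additional compatibility argument; this is exactly what distinguishes the present case from the case $l$ even handled in Section~\ref{sec-Klein}.
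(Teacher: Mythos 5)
Your proposal is correct and follows essentially the same route as the paper: use Theorem \ref{even-elliptic-y1} to dispose of the exponent $1-l$, then run the even Frobenius recursion at the exponent $-n-l$, observe that the only integer resonance beyond $j=0$ occurs at $j=\frac{n+1}{2}$ (the second candidate $j=n+\frac{3l}{2}+1$ being non-integral precisely because $l$ is odd), and define $P_{n,l}$ as the normalized obstruction there. Your explicit product formula verifying that the leading $B$-coefficient is a nonzero rational is a welcome detail the paper leaves implicit, but it does not change the argument.
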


\begin{proof}
Thanks to Theorem \ref{even-elliptic-y1} which says that (\ref{3ode}) has an even elliptic solution with the local exponent $-l+1$ at $0$, the logarithmic singularity can only appear at the local solution with exponent $-n-l$.
Therefore, $0$ is an apparent singularity if and only if (\ref{3ode}) has a local solution with exponent $-n-l=-2k$ of the form
\begin{equation}\label{loc-sol}y(z)=z^{-n-l}\sum_{j=0}^{\infty}c_j z^{2j}=\sum_{j=0}^{\infty}c_j z^{2j-2k},\quad c_0=1.\end{equation}

The following proof is standard by applying Frobenius's method.
Applying the well-known Laurent expansion of $\wp(z)$, we have
\begin{equation}\label{eq-wp-ex}\wp(z)+\tfrac{B}{\alpha}=\sum_{j=0}^{\infty}B_j z^{2j-2}, \quad\text{where}\; B_0=1,\, B_1=\tfrac{B}{\alpha},\, B_2=\tfrac{g_2}{20}\; \text{etc.,}\end{equation}
and $B_j\in\mathbb{Q}[g_2, g_3]$ with homogeneous weight $j$ for any $j\geq 2$. Then
\[\wp'(z)=\sum_{j=0}^{\infty}(2j-2)B_j z^{2j-3}.\]
Inserting these into (\ref{3ode}) and a direct computation leads to
\begin{align}\label{eq-appapp}&\sum_{j=0}^{\infty}\Big((2j-2k)(2j-2k-1)(2j-2k-2)c_j\\
&-\alpha\sum_{i=0}^{j}(2j-2i-2k)B_ic_{j-i}
+\beta\sum_{i=0}^{j}(2i-2)B_ic_{j-i}\Big)z^{2j-2k-3}=0.\nonumber\end{align}
Therefore, $y(z)$ is a solution if and only if
\begin{align}\label{rec-app}
\phi_j c_j=&(2j-2-2k)Bc_{j-1}\\&
+\sum_{i=2}^{j}[\alpha(2j-2i-2k)-\beta(2i-2)]B_ic_{j-i},\quad\forall j\geq 0,\nonumber
\end{align}
where $c_{-2}=c_{-1}:=0$ and
\begin{align*}
\phi_j:=&(2j-2k)(2j-2k-1)(2j-2k-2)-\alpha(2j-2k)-2\beta\\
=&8j(j-k+\tfrac{l-1}{2})(j-n-1-\tfrac{3l}{2}).
\end{align*}
Note that $\phi_j=0$ if and only if $j\in\{0, k-\frac{l-1}{2}\}$, and (\ref{rec-app}) with $j=0$ holds automatically.
By (\ref{rec-app}) and the induction argument, for $1\leq j\leq k-\frac{l-1}{2}-1$, $c_j$ can be uniquely solved as $c_j=c_j(B)\in \mathbb{Q}[g_2,g_3][B]$ with degree $j$ in $B$ and homogenous weight $j$. Consequently, the RHS of (\ref{rec-app}) with $j=k-\frac{l-1}{2}$ is a polynomial in $\mathbb{Q}[g_2,g_3][B]$ with degree $k-\frac{l-1}{2}$ and homogenous weight $k-\frac{l-1}{2}$. Define $P_{n,l}(B)$ to be the corresponding monic polynomial. Then the standard Frobenius theory shows that  (\ref{3ode}) has a local solution $y(z)$ of the form (\ref{loc-sol}) if and only if  $P_{n,l}(B)=0$. This completes the proof.
\end{proof}

After Lemma \ref{lemma-app}, the remaining problem is whether the monodromy is unitary or not for those $B$ satisfying $P_{n,l}(B)=0$, which is challenging in general. To overcome this difficulty, our new idea is to prove that ODE (\ref{3ode}) has two linearly independent even elliptic solutions, one of which has been obtained in Theorem \ref{even-elliptic-y1}. The second one is established in the following lemma.

\begin{lemma}\label{lemma-twoelliptic} Suppose $n, l\geq 1$ are both odd and write $n+l=2k$. Then there exists a polynomial $\widetilde{P}_{n,l}(B)\in \mathbb{Q}[g_2, g_3][B]$ of degree $k-\frac{l-1}{2}=\frac{n+1}{2}$ and homogenous weight $\frac{n+1}{2}$ such that \eqref{3ode} has an even elliptic solution of degree $k$ in $\wp(z)$ if and only if $\tilde{P}_{n,l}(B)=0$.
\end{lemma}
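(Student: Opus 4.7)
The approach mirrors the proof of Theorem \ref{even-elliptic-y1} but now with a higher-degree ansatz. Since both $n$ and $l$ are odd, $k = (n+l)/2$ is a positive integer, and I look for a solution of the form
\[ \tilde y_0(z) = \sum_{j=0}^{k} \tilde C_j\, \wp(z)^j, \qquad \tilde C_k = 1. \]
Substituting into \eqref{3ode} and applying the same reductions $(\wp')^2 = 4\wp^3 - g_2\wp - g_3$, $\wp'' = 6\wp^2 - g_2/2$, $\wp''' = 12\wp'\wp$ that produced \eqref{recursive-y1-1}, one obtains the identical three-term recurrence
\[ \varphi_j\, \tilde C_j = (j+1) B\, \tilde C_{j+1} + (j+1)(j+\tfrac{3}{2})(j+2)\, g_2\, \tilde C_{j+2} + (j+1)(j+2)(j+3)\, g_3\, \tilde C_{j+3} \]
for $j = k, k-1, \ldots, 0$, with the convention $\tilde C_{k+1} = \tilde C_{k+2} = \tilde C_{k+3} := 0$ and $\varphi_j$ as in \eqref{varphi}.

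The structural point that distinguishes this case from Theorem \ref{even-elliptic-y1} is that, because both $n$ and $l$ are odd, $\varphi_j$ has \emph{two} nonnegative integer zeros, namely $j = (l-1)/2$ and $j = k = (n+l)/2$. The recurrence at $j = k$ is automatically $0=0$ by the convention on the highest-index $\tilde C$'s and imposes no condition. For $j = k-1, k-2, \ldots, (l+1)/2$ one has $\varphi_j \neq 0$, so descending induction uniquely determines each $\tilde C_j = \tilde C_j(B) \in \mathbb{Q}[g_2, g_3][B]$ with degree $k-j$ in $B$ and homogeneous weight $k-j$; an explicit bookkeeping shows that the leading coefficient of $\tilde C_{(l+1)/2}(B)$ equals the nonzero product $\prod_{i=(l+1)/2}^{k-1} (i+1)/\varphi_i$.

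The decisive step occurs at $j = (l-1)/2$, where $\varphi_{(l-1)/2} = 0$ forces the compatibility condition
\begin{align*}
\tilde P_{n,l}(B) \;:=\;& \tfrac{l+1}{2}\, B\, \tilde C_{(l+1)/2}(B) + \tfrac{(l+1)(l+2)(l+3)}{8}\, g_2\, \tilde C_{(l+3)/2}(B) \\
&+ \tfrac{(l+1)(l+3)(l+5)}{8}\, g_3\, \tilde C_{(l+5)/2}(B) \;=\; 0.
\end{align*}
Only the first summand reaches the maximal degree $1 + (n-1)/2 = (n+1)/2$ in $B$, with nonzero leading coefficient by the previous paragraph; the other two summands have strictly smaller $B$-degree. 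Hence $\tilde P_{n,l}$ has degree exactly $(n+1)/2$, and tracking weights through the recurrence (with $B,g_2,g_3$ of weights $1,2,3$) forces it to be homogeneous of weight $(n+1)/2$. When $\tilde P_{n,l}(B) = 0$, the coefficient $\tilde C_{(l-1)/2}$ is left as a free parameter, corresponding to the freedom to add a multiple of the $y_0$ from Theorem \ref{even-elliptic-y1}, and the remaining $\tilde C_j$ for $j < (l-1)/2$ are uniquely determined since $\varphi_j \neq 0$ throughout that range, yielding the desired even elliptic solution of $\wp$-degree $k$; conversely if $\tilde P_{n,l}(B) \neq 0$ no such solution exists. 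The only subtle point, and essentially the only obstacle worth flagging, is confirming the nonvanishing of the leading coefficient of $\tilde C_{(l+1)/2}(B)$, which is immediate from the explicit product formula together with the fact that $\varphi_i \neq 0$ throughout $(l+1)/2 \leq i \leq k-1$.
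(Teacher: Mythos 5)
Your proof is correct and follows essentially the same route as the paper: the same polynomial-in-$\wp$ ansatz, the same three-term-plus recurrence \eqref{recursive-y1-1}, the observation that $\varphi_j$ vanishes exactly at $j=k$ and $j=\frac{l-1}{2}$, and the definition of $\widetilde{P}_{n,l}(B)$ as the obstruction at $j=\frac{l-1}{2}$, with the freedom of $\tilde C_{(l-1)/2}$ accounting for the solution from Theorem \ref{even-elliptic-y1}. Your explicit product formula $\prod_{i=(l+1)/2}^{k-1}(i+1)/\varphi_i$ for the leading coefficient is a nice touch the paper leaves implicit (the paper simply normalizes to the monic polynomial), but the argument is the same.
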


\begin{proof}
The proof is similar to that of Theorem \ref{even-elliptic-y1}. Suppose that
\begin{equation}\label{sec-elliptic}y(z)=\sum_{j=0}^{k}C_jx^j,\quad x=\wp(z),\; C_k=1\end{equation}
is a solution of (\ref{3ode}). Then this is equivalent to that $C_j$ satisfies the recursive relations (\ref{varphi})-(\ref{recursive-y1-1}). Note from $n+l=2k$ that $\varphi_j=0$ if and only if $j\in \{k,\frac{l-1}{2}\}$. By (\ref{recursive-y1-1}), for $\frac{l-1}{2}+1\le j\le k-1$, $C_j$ can be uniquely solved as $C_j=C_j(B)\in\mathbb{Q}[g_2,g_3][B]$ with degree $k-j$ and homogenous weight of $k-j$. Then the RHS of (\ref{recursive-y1-1}) with $j=\frac{l-1}{2}$ is a polynomial in $\mathbb{Q}[g_2,g_3][B]$ with degree $k-\frac{l-1}{2}$ and homogenous weight of $k-\frac{l-1}{2}$. Denote this corresponding monic polynomial by $\widetilde{P}_{n,l}(B)$. Then $y(z)$ being a solution implies $\widetilde{P}_{n,l}(B)=0$.

Conversely, if $\widetilde{P}_{n,l}(B)=0$, then (\ref{recursive-y1-1}) with $j=\frac{l-1}{2}$ holds automatically and $C_{\frac{l-1}{2}}$ can be arbitrary. Take any value for $C_{\frac{l-1}{2}}$, then $C_j$, $0\leq j\leq \frac{l-1}{2}-1$ can be uniquely determined via (\ref{recursive-y1-1}), namely (\ref{3ode}) has a solution $y(z)$ of the form (\ref{sec-elliptic}). Remark that the freedom of $C_{\frac{l-1}{2}}$ follows from the other even elliptic solution proved in Theorem \ref{even-elliptic-y1}. The proof is complete.
\end{proof}

The next question is whether the above two polynomials $P_{n,l}(B)$ and $\widetilde{P}_{n,l}(B)$ coincide or not, which plays a crucial role in the proof of the monodromy being not unitary. To settle this question, we need the follow lemma.

\begin{lemma}\label{lemma-real} The polynomial $\widetilde{P}_{n,l}(B)$ in Lemma \ref{lemma-twoelliptic} has $k-\frac{l-1}{2}=\frac{n+1}{2}$ distinct roots expect for $\lceil \frac{n^2-1}{24}\rceil$ numbers of $\tau$'s modulo $SL(2,\mathbb{Z})$.
\end{lemma}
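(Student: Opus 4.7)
The plan is to identify the discriminant $\Delta_{n,l}(\tau) := \operatorname{disc}_B \widetilde{P}_{n,l}(B)$ as a nonzero holomorphic modular form on $SL(2,\mathbb{Z})$ and then bound its zero set by the standard $k/12$-formula. I view $\Delta_{n,l}$ as a function of $\tau$ through $g_2 = g_2(\tau)$, $g_3 = g_3(\tau)$, and write $d := (n+1)/2$.

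First I would check the modular-form structure. By Lemma \ref{lemma-twoelliptic}, $\widetilde{P}_{n,l}(B)$ is monic in $B$ of degree $d$ with coefficients in $\mathbb{Q}[g_2,g_3]$, and is homogeneous of weight $d$ under $\operatorname{wt}(B) = 1$, $\operatorname{wt}(g_2) = 2$, $\operatorname{wt}(g_3) = 3$. Since the discriminant of a monic degree-$d$ polynomial whose roots each carry weight $1$ is homogeneous of weight $d(d-1)$, the function $\Delta_{n,l}$ is a polynomial in $g_2, g_3$ homogeneous of weight $d(d-1) = (n^2-1)/4$. Converting to the classical modular convention (where $g_2, g_3$ have weights $4, 6$), this makes $\Delta_{n,l}(\tau)$ a holomorphic modular form on $SL(2,\mathbb{Z})$ of weight $k := (n^2-1)/2$.

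The main obstacle is showing $\Delta_{n,l}(\tau) \not\equiv 0$. My plan is to evaluate at the elliptic point $\tau_0 = i$, where $g_3(i) = 0$ and $g_2(i) \neq 0$: the $g_3$-term in the four-term recursion (\ref{recursive-y1-1}) drops out, leaving a genuine three-term recursion in the $C_j$'s. The condition ``$\widetilde{P}_{n,l}(B)|_{\tau=i} = 0$'' then becomes the spectral condition for a finite Jacobi-type tridiagonal operator, which after a diagonal rescaling becomes a real symmetric tridiagonal matrix whose off-diagonal entries are explicit nonzero functions of $g_2(i)$. By the standard simple-spectrum theorem for such matrices, its $d$ eigenvalues are distinct, so $\widetilde{P}_{n,l}(B)|_{\tau=i}$ has $d$ distinct roots and $\Delta_{n,l}(i) \neq 0$, forcing $\Delta_{n,l} \not\equiv 0$. (Should the evaluation at $\tau_0 = i$ be inconclusive for some exceptional $(n,l)$, the analogous evaluation at $\tau_0 = \rho$, where $g_2(\rho) = 0$, provides a fallback three-term reduction.)

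Finally, I apply the valence ($k/12$) formula for the nonzero modular form $\Delta_{n,l}$ of weight $k$ on $SL(2,\mathbb{Z})$:
\[
v_\infty(\Delta_{n,l}) + \tfrac{1}{2} v_i(\Delta_{n,l}) + \tfrac{1}{3} v_\rho(\Delta_{n,l}) + \sum_{[\tau] \neq [i],[\rho],[\infty]} v_\tau(\Delta_{n,l}) = \tfrac{k}{12}.
\]
Since $n$ is odd, a quick check shows $k = (n^2-1)/2$ satisfies $k \equiv 0$ or $4 \pmod{12}$; a short case-by-case enumeration of the nonnegative-integer decompositions $k/12 = c + a/2 + b/3 + m$ then bounds the number of distinct orbits $[\tau] \in \mathbb{H}/SL(2,\mathbb{Z})$ on which $\Delta_{n,l}$ vanishes by $\lceil k/12 \rceil = \lceil (n^2-1)/24 \rceil$. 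Off this finite exceptional set, $\widetilde{P}_{n,l}(B)$ has its full complement of $d = (n+1)/2$ distinct roots, which is the statement of the lemma.
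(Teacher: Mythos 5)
Your proposal follows essentially the same route as the paper: specialize to $\tau=i$ where $g_3=0$ collapses the recursion \eqref{recursive-y1-1} to a three--term one, prove that $\widetilde P_{n,l}(B)\big|_{\tau=i}$ has $\tfrac{n+1}{2}$ distinct real roots, then note that the discriminant is a weight-$\tfrac{n^2-1}{2}$ modular form, nonvanishing at $i$, and apply the valence formula. The only difference is in the middle step, where the paper runs an explicit Sturm-sequence interlacing induction on the $C_j(B)$ while you invoke the simple-spectrum theorem for symmetrized Jacobi matrices; these are equivalent (the latter is standardly proved by the former) and both ultimately rest on the same sign facts. Two caveats: (i) for the diagonal rescaling to produce a \emph{real symmetric} tridiagonal matrix you need the products of corresponding off-diagonal entries to be \emph{positive}, not merely nonzero --- this does hold here because $\varphi_j<0$ for $\tfrac{l-1}{2}+1\le j\le k-1$ and $g_2(i)>0$, but it must be checked, since a sign-indefinite Jacobi matrix need not have simple real spectrum; (ii) your fallback at $\tau=\rho$ does not work: there $g_2=0$ kills the $C_{j+2}$ term but leaves the $g_3C_{j+3}$ term, so the recursion links $C_j,C_{j+1},C_{j+3}$ and is not tridiagonal. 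Fortunately the fallback is never needed, as the evaluation at $\tau=i$ succeeds for all odd $n$ and odd $l$.
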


\begin{proof}
The key idea of this proof is to consider the special case $\tau=i=\sqrt{-1}$ and prove that $\widetilde{P}_{n,l}(B)$ has real distinct roots. For $\tau=i$, it is well known that $g_3=0$ and $g_2>0$, so the recursive formula (\ref{recursive-y1-1}) becomes
\begin{align}\label{recursive-y1-10}\varphi_j C_j=(j+1)BC_{j+1}+(j+1)(j+\tfrac{3}{2})(j+2)g_2 C_{j+2},\quad 0\leq j\leq k,
\end{align}
with $k=\frac{n+l}{2}$ and
\begin{equation}\label{varphi-0}\varphi_j=\tfrac{1}{2}(2j-l+1)(2j-n-l)(2j+n+2l+2).
\end{equation}
Comparing to the original recursive formula (\ref{recursive-y1-1}) for general $\tau$, the advantage of (\ref{recursive-y1-10}) is that it contains only three terms $C_j, C_{j+1}, C_{j+2}$, from which we can apply the method of Sturm sequence to prove that $\widetilde{P}_{n,l}(B)$ has real distinct roots.

Recall that $C_k=1$, $C_{k+1}=C_{k+2}=0$, and $C_j=C_j(B)$ are polynomials in $\mathbb{Q}[g_2][B]\subset \mathbb{R}[B]$ with degree $k-j$ for $\frac{l-1}{2}+1\leq j\leq k-1$. Write $k_0=\frac{l-1}{2}$ for convenience.

{\bf Step 1.} We prove that for $k_0+1\leq j\leq k-1$, $C_j(B)$ has real distinct roots, denoted by $r_1^j<\cdots<r_{k-j}^j$, such that
\begin{equation}\label{root1}
r_1^j<r_1^{j+1}<r_2^j<\cdots<r_{k-j-1}^{j}<r_{k-j-1}^{j+1}<r_{k-j}^j.
\end{equation}
The proof is by the method of Sturm sequence; we give the details here for completeness.

Recalling (\ref{varphi-0}) that $\varphi_j<0$ for $k_0+1\leq j\leq k-1$, we easily obtain the following properties for all $k_0+1\leq j\leq k-1$ from (\ref{recursive-y1-10}):

{\bf (P1)} Up to a positive constant, the leading term in $C_{j}(B)$ is $(-1)^{k-j}B^{k-j}$.

{\bf (P2)} If $C_{j+1}(B)=0$ and $C_{j+2} (B)\neq 0$ for $B\in \mathbb{R}$, then
$C_{j} (B)C_{j+2} (B)<0$.

Now we prove (\ref{root1}) by induction. The case $j=k-1$ is trivial because $\deg C_{k-1}(B)=1$. Let $k_0<m<k-1$ and
assume that the statement is true for any $j>m$. We prove it for $j=m$. From the assumption of the
induction,
\begin{equation}\label{s-k}
r_{1}^{m+1}<r_{1}^{m+2}<r_{2}^{m+1}<\cdots
<r_{k-m-2}^{m+1}<r_{k-m-2}^{m+2}<r_{k-m-1}^{m+1}.
\end{equation}
Recall {\bf (P1)} that
\begin{equation}\label{ck-1}
\lim_{B\rightarrow-\infty} C_{m+2}(B) =+\infty, \; \lim_{B \rightarrow
+\infty} C_{m+2}(B) = (-1)^{k-m-2}\infty.
\end{equation}
Since $r_{j}^{m+2}$, $1\le j\le k-m-2$, are all the roots of  $C_{m+2}(B)$, it follows from (\ref{s-k}) and (\ref{ck-1}) that
\begin{equation}\label{c-k-1-k}
C_{m+2}(r_j^{m+1})\sim (-1)^{j-1},\quad \forall j\in [1,k-m-1].
\end{equation}
Here $c\sim (-1)^j$ means $c=(-1)^j\tilde{c}$ for some $\tilde{c}>0$. Then we see from {\bf (P2)} that
\[C_{m}(r_j^{m+1})\sim (-1)^{j},\quad \forall j\in [1,k-m-1].\]
On the other hand, {\bf (P1)} implies
\[\lim_{B \rightarrow-\infty} C_{m}(B) =+\infty, \; \lim_{B \rightarrow
+\infty} C_{m}(B) = (-1)^{k-m} \infty.\]
From here,
it follows from the intermediate value theorem that the polynomial
$C_{m}(B)$ has $k-m$ real distinct roots $r_{j}^{m} $ $(1\leq j\leq
k-m)$ such that \begin{equation}\label{sk1}r_{1}^{m}<r_{1}^{m+1}<r_{2}^{m}
<\dots<r_{k-m-1}^{m}<r_{k-m-1}^{m+1}<r_{k-m}^{m}.\end{equation}
This proves (\ref{root1}) for all $k_0+1\leq j\leq k-1$.

{\bf Step 2.} We prove that $\widetilde{P}_{n,l}(B)$ has $k-k_0$ real distinct roots.

Define
\begin{equation}\label{Toda-eq-3-1}L(B):=(k_0+1)BC_{k_0+1}+(k_0+1)(k_0+\tfrac{3}{2})(k_0+2)g_2 C_{k_0+2}.\end{equation}
Then the definition of $\widetilde{P}_{n,l}(B)$ in Lemma \ref{lemma-twoelliptic} implies $L(B)=c\widetilde{P}_{n,l}(B)$, where $c\neq 0$ is the coefficient of the leading term of $L(B)$. So it suffices to prove that $L(B)$ real distinct roots.

Recall {\bf (P1)} that
\begin{equation}\label{ck-1}
\lim_{B\rightarrow-\infty} C_{k_0+2}(B) =+\infty, \; \lim_{B \rightarrow
+\infty} C_{k_0+2}(B) = (-1)^{k-k_0-2}\infty.
\end{equation}
By (\ref{root1}) with $j=k_0+1$, we have (similar to (\ref{c-k-1-k}))
\[
C_{k_0+2}(r_i^{k_0+1})\sim (-1)^{i-1},\quad \forall i\in [1,k-k_0-1],
\]
and so it follows from (\ref{Toda-eq-3-1}) and $C_{k_0+1}(r_i^{k_0+1})=0$ that
\[L(r_i^{k_0+1})\sim C_{k_0+2}(r_i^{k_0+1})\sim (-1)^{i-1}, \quad \forall i\in [1,k-k_0-1].\]
On the other hand, up to a positive constant, the leading term of $L(B)$ is $(-1)^{k-k_0-1}B^{k-k_0}$, the same as that of $BC_{k_0+1}$, which implies
\[\lim_{B \rightarrow-\infty} L(B) =-\infty, \; \lim_{B \rightarrow
+\infty} L(B) = (-1)^{k-k_0-1} \infty.\]
Therefore,
$L(B)$ has $k-k_0$ real distinct roots $r_{i}^{k_0} $ $(1\leq i\leq
k-k_0)$ such that \[r_{1}^{k_0}<r_{1}^{k_0+1}<r_{2}^{k_0}
<\dots<r_{k-k_0-1}^{k_0}<r_{k-k_0-1}^{k_0+1}<r_{k-k_0}^{k_0}.\]

{\bf Step 3.} We complete the proof. Recall that $g_2(\tau), g_3(\tau)$ are modular forms of weights $4, 6$ respectively, with respect to $SL(2,\mathbb{Z})$. Since $\widetilde{P}_{n,l}(B)\in \mathbb{Q}[g_2(\tau), g_3(\tau)][B]$ is of homogenous weight $\frac{n+1}{2}$ with the weights of $B, g_2, g_3$ being $1, 2, 3$ respectively, then
the discriminant $D(\tau)$ of $\widetilde{P}_{n,l}(B)$ is a modular form of weight $\frac{n^2-1}{2}$ with respect to $SL(2,\mathbb{Z})$. By Step 2 we have $D(i)\neq 0$ and so $D(\tau)\neq 0$ for any $\tau\equiv i$ modulo $SL(2,\mathbb{Z})$, i.e. $D(\tau)\not\equiv 0$. Since \[\frac{n^2-1}{2}\equiv 0 \text{ or } 4 \quad\mod 12\quad\text{for any odd $n$},\]
it follows from the well-known theorem of counting zeros of modular forms (see e.g. \cite{Serre}) that $D(\tau)$ has at most
$\lceil \frac{n^2-1}{24}\rceil$ zeros
modulo $SL(2,\mathbb{Z})$, so $D(\tau)\neq0$ except for $\lceil \frac{n^2-1}{24}\rceil$ numbers of $\tau$'s modulo $SL(2,\mathbb{Z})$. This proves that $\widetilde{P}_{n,l}(B)$ has $\frac{n+1}{2}$ distinct roots expect for $\lceil \frac{n^2-1}{24}\rceil$ numbers of $\tau$'s modulo $SL(2,\mathbb{Z})$.
\end{proof}

\begin{lemma}\label{lemma-poly} Recall the monic polynomials $P_{n,l}(B)$ and $\widetilde{P}_{n,l}(B)$ in Lemmas \ref{lemma-app} and \ref{lemma-twoelliptic} respectively. Then $P_{n,l}(B)=\widetilde{P}_{n,l}(B)$. Consequently, $P_{n,l}(B)$ has $\frac{n+1}{2}$ distinct roots expect for $\lceil \frac{n^2-1}{24}\rceil$ numbers of $\tau$'s modulo $SL(2,\mathbb{Z})$.
\end{lemma}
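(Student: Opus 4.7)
The plan is to prove $P_{n,l}=\widetilde{P}_{n,l}$ as elements of $\mathbb{Q}[g_2,g_3][B]$, after which the stated root count follows at once from Lemma \ref{lemma-real}. The key conceptual observation is that the existence of a \emph{second} even elliptic solution (what $\widetilde{P}_{n,l}$ detects) automatically kills the logarithmic obstruction in the Frobenius expansion at the exponent $-n-l$ (what $P_{n,l}$ detects); it is this coincidence that forces the two polynomials to be equal.

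Concretely, I would first check that every root of $\widetilde{P}_{n,l}(B)$ is a root of $P_{n,l}(B)$. If $\widetilde{P}_{n,l}(B_0)=0$, then Lemma \ref{lemma-twoelliptic} supplies an even elliptic solution $y_1(z)=\sum_{j=0}^{k}C_j\wp(z)^j$ of \eqref{3ode} at $B=B_0$ with $C_k=1$ and $k=(n+l)/2$; this is meromorphic with a pole of exact order $n+l$ at $0$, hence a log-free local solution at the exponent $-n-l$. Together with the even elliptic solution of Theorem \ref{even-elliptic-y1} (local exponent $1-l$) and the always-existing local solution at the exponent $n+2l+2$, all three local exponents are realized by log-free local solutions; these three are linearly independent since the exponents are distinct, so $0$ is apparent and Lemma \ref{lemma-app} gives $P_{n,l}(B_0)=0$.

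Next I would invoke Lemma \ref{lemma-real}: away from at most $\lceil(n^2-1)/24\rceil$ values of $\tau$ modulo $SL(2,\mathbb{Z})$, the polynomial $\widetilde{P}_{n,l}(B)$ has $(n+1)/2$ distinct roots, and by the previous step each is a root of the monic degree-$(n+1)/2$ polynomial $P_{n,l}(B)$, forcing $P_{n,l}(B)=\widetilde{P}_{n,l}(B)$ as polynomials in $B$ for such $\tau$. To promote this pointwise identity to one in $\mathbb{Q}[g_2,g_3][B]$, write $P_{n,l}-\widetilde{P}_{n,l}=\sum_{j} e_j(g_2,g_3)B^j$ with each $e_j$ weighted homogeneous of weight $(n+1)/2-j$ in $g_2,g_3$ (of weights $2,3$). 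The evaluation $e_j(g_2(\tau),g_3(\tau))$ is a modular form for $SL(2,\mathbb{Z})$ of modular weight $n+1-2j$ that vanishes on a cofinite subset of the fundamental domain; by the valence formula any nonzero such modular form has only finitely many zeros modulo $SL(2,\mathbb{Z})$, so it must be identically zero, and by the algebraic independence of $g_2(\tau)$ and $g_3(\tau)$ we conclude $e_j\equiv 0$ in $\mathbb{Q}[g_2,g_3]$. This yields $P_{n,l}=\widetilde{P}_{n,l}$, and the final root-count assertion is then a direct restatement of Lemma \ref{lemma-real}.

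The main obstacle is the first step: realizing that the meromorphic even elliptic solution produced by $\widetilde{P}_{n,l}$ is exactly what is needed to eliminate the logarithmic term at the hardest exponent $-n-l$, so that the two \emph{a priori} unrelated polynomials (one detecting apparentness via Frobenius, the other detecting a polynomial-in-$\wp$ ansatz) end up cut out by the same equation. Once this is in place, the counting argument and the modular-form extension are routine.
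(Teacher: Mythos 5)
Your proposal is correct and follows essentially the same route as the paper: a root of $\widetilde{P}_{n,l}$ yields a second even elliptic solution at the exponent $-n-l$, which together with Theorem \ref{even-elliptic-y1} makes $0$ apparent and hence forces $P_{n,l}$ to vanish there, and then Lemma \ref{lemma-real} plus the matching degrees and monicity give equality for all but finitely many $\tau$ modulo $SL(2,\mathbb{Z})$. The only (harmless) divergence is in the last step, where the paper passes from ``almost all $\tau$'' to ``all $\tau$'' by continuity, while you use the valence formula for the modular-form coefficients of the difference; both work.
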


\begin{proof} Fix any $\tau$ such that $\widetilde{P}_{n,l}(B)$ has distinct roots. Assume $\widetilde{P}_{n,l}(B)=0$. Then Lemma \ref{lemma-twoelliptic} says that (\ref{3ode}) has an even elliptic solution with local exponent $-n-l$ at $0$. Together with Theorem \ref{even-elliptic-y1} which says that (\ref{3ode}) has an even elliptic solution with local exponent $-l+1$ at $0$, we see that all solutions of (\ref{3ode}) are meromorphic at $0$, i.e. $0$ is an apparent singularity. Then Lemma \ref{lemma-app} implies $P_{n,l}(B)=0$, i.e. $\widetilde{P}_{n,l}(B)|P_{n,l}(B)$. Since $\deg \widetilde{P}_{n,l}(B)=\deg P_{n,l}(B)=\frac{n+1}{2}$ and $\widetilde{P}_{n,l}(B)$ has distinct roots, we conclude that $P_{n,l}(B)=\widetilde{P}_{n,l}(B)$. It follows from Lemma \ref{lemma-real} that $P_{n,l}(B)=\widetilde{P}_{n,l}(B)$ for almost all $\tau$ and so for all $\tau$ by continuity with respect to $\tau$.
\end{proof}

Now we are in  the position to prove the following result.

\begin{theorem}[=Theorem \ref{thm-ode-mo}-(1)]\label{thm-nonexistence}
Suppose (\ref{n2n13l1}) with $n$ odd and $l$ odd. Then the monodromy of (\ref{3ode}) can not be unitary for any $B\in\mathbb{C}$.
\end{theorem}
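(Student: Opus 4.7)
The plan is to argue by contradiction, combining the two linearly independent even elliptic solutions supplied by the preceding lemmas with a parity argument on the third local exponent. Fix $B\in\mathbb{C}$ and suppose the monodromy of \eqref{3ode} is unitary. Since logarithmic singularities preclude unitary local monodromy (as noted in the introduction), $0$ must be an apparent singularity; by Lemma \ref{lemma-app}, $P_{n,l}(B)=0$, and by Lemma \ref{lemma-poly}, also $\widetilde{P}_{n,l}(B)=0$. Theorem \ref{even-elliptic-y1} together with Lemma \ref{lemma-twoelliptic} therefore produce two even elliptic solutions of \eqref{3ode}: $y_0$ with local exponent $1-l$ at $0$ (a polynomial in $\wp(z)$ of degree $\tfrac{l-1}{2}$) and $y_1$ with local exponent $-n-l$ at $0$ (a polynomial in $\wp(z)$ of degree $\tfrac{n+l}{2}$); the distinct leading orders at $0$ make $y_0,y_1$ linearly independent.

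Next, extract a third, odd, solution. Since $n$ is odd and $2l+2$ is even, the remaining local exponent $\varsigma_3=n+2l+2$ is odd. By Remark \ref{rrr}, \eqref{3ode} has a local solution of the form $y_2(z)=z^{n+2l+2}(1+O(z^2))$, which is thus an odd function of $z$. As $0$ is apparent, $y_2$ extends to a meromorphic function on $\mathbb{C}$, and its leading order at $0$ differs from those of $y_0,y_1$, so the triple $(y_0,y_1,y_2)$ is a basis of the solution space.

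In this basis, the basic loops $z\mapsto z+\omega_j$ fix $y_0,y_1$ (being elliptic) and send $y_2$ to $y_2+a_j y_0+b_j y_1$ for some $a_j,b_j\in\mathbb{C}$; using $\det N_j=1$, one obtains
\[N_j=\begin{pmatrix}1 & 0 & a_j\\ 0 & 1 & b_j\\ 0 & 0 & 1\end{pmatrix},\qquad j=1,2.\]
Each $N_j$ has $1$ as a triple eigenvalue, and since any unitary matrix is diagonalizable, the unitarity hypothesis forces $N_j=I$, i.e.\ $a_j=b_j=0$ for $j=1,2$. Thus $y_2$ is doubly periodic, hence elliptic. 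But its local exponent $n+2l+2\geq 5$ makes it holomorphic at $0$ (with a zero there), and the only singularity of \eqref{3ode} on $E_\tau$ is $0$; hence $y_2$ is holomorphic on the whole torus. Liouville's theorem forces $y_2$ to be constant, and the vanishing at $0$ forces $y_2\equiv 0$, contradicting the linear independence of $y_2$ from $y_0,y_1$.

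The main obstacle, essentially resolved by the preceding machinery, is producing two independent even elliptic solutions simultaneously, which is precisely the coincidence $P_{n,l}(B)=\widetilde{P}_{n,l}(B)$ from Lemma \ref{lemma-poly}; the parity observation $\varsigma_3$ odd, which is exactly where both $n$ and $l$ odd enters, is what makes the third solution visibly odd and the resulting Jordan block structure of $N_1,N_2$ incompatible with unitarity.
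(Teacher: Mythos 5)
Your proposal is correct and follows essentially the same route as the paper's proof: both deduce from Lemmas \ref{lemma-app}--\ref{lemma-poly} and Theorem \ref{even-elliptic-y1} the existence of two independent even elliptic solutions, use $\det N_j=1$ together with diagonalizability of unitary matrices to force $N_1=N_2=I_3$, and then derive a contradiction from the solution with positive local exponent $n+2l+2$ being an elliptic function without poles that vanishes at $0$. The extra details you supply (the explicit unipotent upper-triangular form of $N_j$ and the oddness of $y_2$) are harmless refinements of the same argument.
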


\begin{proof}
Suppose that for some $B$, the monodromy of (\ref{3ode}) is unitary, i.e. the monodromy group is conjugate to a subgroup of the unitary group $SU(3)$. Then $0$ is an apparent singularity, so the previous Lemmas \ref{lemma-app}-\ref{lemma-poly} show that (\ref{3ode}) has an even elliptic solution $y_1(z)$ with local exponent $-n-l$. Note from Theorem \ref{even-elliptic-y1} that (\ref{3ode}) has another even elliptic solution $y_0(z)$ with local exponent $-l+1$. Recalling the monodromy matrices $N_1, N_2\in SL(3,\mathbb{C})$ in (\ref{abelianm}), i.e. $\det N_j=1$, it follows that $\lambda=1$ is the only eigenvalue of both $N_1$ and $N_2$ with multiplicity $3$. Since $N_j$ is conjugate to a unitary matrix, so it can be diagonalized and in conclusion,
\[N_1=N_2=I_3:=\left(\begin{smallmatrix}1 & &\\
&1 & \\
& & 1\end{smallmatrix}\right).\]
Therefore, all solutions of (\ref{3ode}) are elliptic functions. In particular, let $y_2(z)$ be a local solution of (\ref{3ode}) with local exponent $n+2l+2>0$, then $y_2(z)$ is an elliptic function. However, $y_2(z)$ has no poles but has zeros at $0$, which implies $y_2(z)\equiv 0$, a contradiction. Thus, the monodromy of (\ref{3ode}) can not be unitary for any $B\in\mathbb{C}$. The proof is complete.
\end{proof}

\section{The case $n$ odd, $l$ even: Apparent imply Klein four-group}
\label{sec-Klein}

In this section, we consider the case that $n$ is odd and $l$ is even, and prove Theorem \ref{thm-ode-mo}-(2).
Note that the local exponents $-n-l$, $-l+1$ and $n+2l+2$ are all \emph{odd},
so the local solutions with exponents $-n-l$ or $-l+1$ might be logarithmic at $0$ in general.

First we study the condition on $B$ to guarantee that $0$ is an apparent singularity of (\ref{3ode}).
The first main result of this section is

\begin{theorem}
\label{thm-noddleven-apparent}
Suppose $n\geq 1$ is odd and  $l\geq 0$ is even. Then there is a monic polynomial $P_{n,l}(B)\in\mathbb{Q}[g_2,g_3][B]$ with degree $\frac{n+1}{2}$ in $B$ and also homogenous weight $\frac{n+1}{2}$ such that $0$ is an apparent singularity of \eqref{3ode} if and only if $P_{n,l}(B)=0$.
Here the weights of $B, g_2, g_3$ are $1, 2, 3$ respectively, and
\begin{equation}\label{remark-n1l2}
P_{1,l}(B)=B,\quad P_{3,l}(B)=B^2-3(l+2)^2g_2.
\end{equation}
\end{theorem}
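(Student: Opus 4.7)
The plan is to apply the Frobenius method at the lowest local exponent $\varsigma_1 = -n-l$, following the template of Lemma \ref{lemma-app}. By Remark \ref{rrr} and the $z\to -z$ invariance of \eqref{3ode}, apparentness of $0$ is equivalent to the existence, for each of the two lower exponents $a\in\{-n-l,\,1-l\}$, of a local solution of the form $y(z) = z^{a}\sum_{j\geq 0} c_j z^{2j}$ with $c_0=1$; the $\varsigma_3$-solution is always log-free. Inserting this ansatz into \eqref{3ode} and using the Laurent expansion $\wp(z)+B/\alpha=\sum_{i\geq 0}B_i z^{2i-2}$, where $B_0=1$, $B_1=B/\alpha$ and $B_i\in\mathbb{Q}[g_2,g_3]$ is homogeneous of weight $i$ for $i\geq 2$, gives the recursion
\[
\phi_j(a)\,c_j \;=\; \sum_{i=1}^{j}\bigl[\alpha\bigl(a+2(j-i)\bigr)-\beta(2i-2)\bigr]B_i\,c_{j-i},
\]
with $\phi_j(a)=(a+2j)(a+2j-1)(a+2j-2)-\alpha(a+2j)-2\beta$ vanishing precisely when $a+2j$ is a local exponent.

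First I would run the recursion at $a=-n-l$. The zero set of $\phi_j(-n-l)$ is $j\in\{0,\,(n+1)/2,\,n+3l/2+1\}$, all integers since $n$ is odd and $l$ is even. For $1\leq j<(n+1)/2$ the denominator $\phi_j$ is nonzero, so $c_j\in\mathbb{Q}[g_2,g_3][B]$ is uniquely determined with $\deg_B c_j=j$ and homogeneous weight $j$ (assigning weight $1$ to $B$, $i$ to $B_i$, $0$ to $\phi_j$). At $j=(n+1)/2$ the left-hand side vanishes, and the right-hand side is a polynomial of degree and weight $(n+1)/2$; tracking the iterated $B_1$-contribution through the recursion shows its top $B$-coefficient is nonzero. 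Normalizing, I define $P_{n,l}(B)$ to be this monic obstruction polynomial. The necessity direction ($P_{n,l}(B)\neq 0 \Rightarrow 0$ not apparent) is then immediate, as the $\varsigma_1$-Frobenius must carry a logarithmic term.

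The main obstacle is the converse, $P_{n,l}(B)=0 \Rightarrow 0$ apparent. In Section \ref{sec-unitary}, Theorem \ref{even-elliptic-y1} automatically furnished a log-free even elliptic $\varsigma_2$-solution, reducing the problem to a single obstruction; that shortcut is unavailable here. Instead one must show that (i) the second $\varsigma_1$-obstruction at $j=n+3l/2+1$ and (ii) the $\varsigma_2$-Frobenius obstruction at $j=(n+3l+1)/2$ (itself a polynomial in $B$ of degree $(n+3l+1)/2$) both vanish whenever $P_{n,l}(B)=0$. Concretely I would aim to prove divisibility of each of these higher-degree obstruction polynomials by $P_{n,l}(B)$; two plausible tools are (a) a Wronskian-type identity linking the $\varsigma_1$- and $\varsigma_2$-recursions, or (b) the duality in \eqref{eq: data}, since apparentness of \eqref{3ode} is equivalent to apparentness of the dual \eqref{3ode-dual}, whose lowest exponent $\tilde\varsigma_1=-n-2l$ yields its own first-obstruction polynomial that must share its zero set with $P_{n,l}(B)$. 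A hand computation at $(n,l)=(1,2)$ (where $(n+3l+1)/2=4$) confirms that the $\varsigma_2$-obstruction vanishes at $B=0$, consistent with $P_{1,2}(B)=B$. The explicit forms $P_{1,l}(B)=B$ and $P_{3,l}(B)=B^2-3(l+2)^2 g_2$ then drop out from one or two steps of the $\varsigma_1$-recursion by hand.
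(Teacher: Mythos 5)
Your necessity argument is sound and matches the paper's: running the Frobenius recursion at the exponent $-n-l$, the resonances occur at $j\in\{0,\tfrac{n+1}{2},\,n+\tfrac{3l}{2}+1\}$, and the obstruction at $j=\tfrac{n+1}{2}$ defines the monic polynomial $P_{n,l}(B)$ of degree and weight $\tfrac{n+1}{2}$, with \eqref{remark-n1l2} obtained by direct computation. The genuine gap is the sufficiency direction, which you correctly identify as the main obstacle but then only sketch. You propose to show that the second $\varsigma_1$-obstruction (at $j=n+\tfrac{3l}{2}+1$) and the $\varsigma_2$-obstruction (at $j=\tfrac{n+3l+1}{2}$) are each divisible by $P_{n,l}(B)$, via either ``a Wronskian-type identity'' or the duality \eqref{eq: data}. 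Neither tool is carried out, and neither is likely to close the gap as stated: the paper explicitly remarks (Remark \ref{rrrmmm}) that establishing these divisibilities purely within the Frobenius framework ``seems too difficult,'' and this is precisely why a new mechanism is needed. The duality route in particular does not simplify matters, since for $n$ odd and $l$ even the dual exponents $-n-2l$, $l+1$, $n+l+2$ are again all odd, so the dual equation faces the identical three-obstruction problem. A single numerical check at $(n,l)=(1,2)$ is consistency evidence, not a proof.

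What is missing is the paper's actual sufficiency mechanism: descend \eqref{3ode} to $\mathbb{P}^1$ via $x=\wp(z)$ and, for each half-period $e_i$, seek a solution of the form $(\wp(z)-e_i)^{1/2}\sum_{j=0}^{k}C_j(\wp(z)-e_i)^{j}$ with $k=\tfrac{n+l-1}{2}$. Because the local exponents of the descended equation at $e_i$ are $0,\tfrac12,1$, the resulting recursion in the $C_j$ has exactly \emph{one} nontrivial obstruction $\hat P_{i;n,l}(B)$ (at $j=k-\tfrac{n+1}{2}$), rather than three. The key step is then the identification $\hat P_{i;n,l}(B)=P_{n,l}(B)$ for each $i$, proved by re-expanding the candidate $y_{i-1}(z)$ in powers of $z$ and matching the leading coefficient of $Ly_{i-1}$ against the normalized form \eqref{odd-appapp1} of the $z$-recursion. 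Once that identity is in hand, $P_{n,l}(B)=0$ lets one complete all three constructions and obtain a basis $(y_0,y_1,y_2)$ of single-valued solutions, whence $0$ is apparent. Without this (or some genuinely workable substitute for the divisibility claims), the ``if'' half of the theorem remains unproved.
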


\begin{remark} The conclusion of Theorem \ref{thm-noddleven-apparent} is the same as that of Lemma \ref{lemma-app}, which together imply the first assertion of Theorem \ref{thm-ode-mo}, but the proof is quite different. One obvious difference from the case of $n,l$ being both odd is that (\ref{3ode}) has no elliptic solutions for the case of $n$ odd and $l$ even (see Theorem \ref{thm-nodd-k4} for the proof).  We want to emphasize that in contrast with Lemma \ref{lemma-app}, Frobenius's method is not enough for us to prove Theorem \ref{thm-noddleven-apparent}, and the following proof of Theorem \ref{thm-noddleven-apparent} requires new ideas and is highly nontrivial. More precisely, the necessary part of Theorem \ref{thm-noddleven-apparent} can be proved by applying Frobenius's method, but the sufficient part can not, and we need to develop new ideas to settle this problem. The reason is that since the three local exponents are all odd, Frobenius's method will imply the existence of three polynomials of $B$ such that $0$ is apparent if and only if $B$ is a common root of these three polynomials, but the problem is that it seems too difficult to see whether these three polynomials have common roots or not. See Remark \ref{rrrmmm} below.
\end{remark}

First we need the following simple lemma.

\begin{lemma}\label{lemma-4n-1}
Suppose $n\geq 1$ is odd,  $l\geq 0$ is even and $0$ is an apparent singularity of \eqref{3ode} for some $B$. Then all solutions of \eqref{3ode} with this $B$ are meromorphic and odd.
\end{lemma}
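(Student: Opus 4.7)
The plan is to exploit two structural symmetries of \eqref{3ode}: invariance under $z\mapsto -z$ (already used in Remark \ref{rrr}) and invariance under $z\mapsto z+\omega$ for any $\omega\in\mathbb{Z}+\tau\mathbb{Z}$ (since $\wp,\wp'$ are $\tau$-elliptic). Together with the parity observation that, under the standing hypothesis $n$ odd and $l$ even, all three local exponents
\[
\varsigma_1=-n-l,\quad \varsigma_2=1-l,\quad \varsigma_3=n+2l+2
\]
are \emph{odd} integers, these symmetries should force any solution to be odd and meromorphic on the whole plane.

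First I would unpack apparentness. Since $0$ is apparent and the three local exponents are pairwise distinct integers, Frobenius theory guarantees, for each $a\in\{\varsigma_1,\varsigma_2,\varsigma_3\}$, a local solution $y_a(z)=z^{a}\sum_{j\ge 0}c_j^{(a)}z^{j}$ with $c_0^{(a)}\neq 0$ and no logarithmic term. Applying Remark \ref{rrr} to each $y_a$ replaces $y_a$ by the projection
\[
\tfrac{1}{2}\bigl(y_a(z)+(-1)^a y_a(-z)\bigr)=z^{a}\sum_{j\ge 0}\tilde c_j^{(a)}z^{2j},
\]
which is still a nonzero local solution with leading exponent $a$ (the projection preserves the $c_0^{(a)}$ term). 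Because $a$ is odd, the function $z^{a}\sum_j \tilde c_j^{(a)}z^{2j}$ satisfies $y_a(-z)=-y_a(z)$, i.e.\ it is an odd function of $z$. The three solutions obtained this way have distinct leading orders $\varsigma_1<\varsigma_2<\varsigma_3$, hence are linearly independent and form a basis of the solution space. Every solution of \eqref{3ode} is therefore a linear combination of odd functions, hence odd.

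Next I would address meromorphy in $\mathbb{C}$. The coefficients of \eqref{3ode} lie in $\mathbb{C}(\wp,\wp')$, so the ODE is invariant under the translation $z\mapsto z+\omega$ for every $\omega\in\mathbb{Z}+\tau\mathbb{Z}$. Consequently, if $y$ is a local solution of \eqref{3ode}, then $y(\,\cdot\,+\omega)$ is again a local solution near the translated point, and the local monodromy of \eqref{3ode} at each lattice point is conjugate to the local monodromy at $0$. Apparentness at $0$ therefore propagates to every lattice point: no solution picks up logarithmic terms at any singularity. By the general Frobenius/meromorphy criterion, the unique analytic continuation of each of the three basis solutions along any path in $\mathbb{C}$ is single-valued and meromorphic, so every solution of \eqml{3ode} is meromorphic on $\mathbb{C}$.

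The argument is essentially two observations stitched together, so there is no genuine obstacle here; the only care needed is to verify that projecting onto the even-index Laurent part in the first paragraph really produces a nonzero solution with the prescribed leading exponent, which is immediate because $(-1)^{2a}=1$ and thus the leading term $c_0^{(a)}z^{a}$ survives the projection. This lemma will then be used downstream to restrict attention to odd meromorphic solutions when building $P_{n,l}(B)$ in Theorem \ref{thm-noddleven-apparent}.
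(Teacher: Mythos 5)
Your argument is correct and rests on exactly the two ingredients the paper uses — the invariance of \eqref{3ode} under $z\mapsto -z$ together with the fact that all three local exponents $-n-l$, $1-l$, $n+2l+2$ are odd — so it is essentially the same approach: the paper runs the parity step by contradiction (a non-odd solution would yield a nonzero even solution, whose local exponent at $0$ would have to be even), while you construct an odd basis by parity projection, and the paper takes meromorphy in $\mathbb{C}$ as the standard consequence of apparentness rather than re-deriving it via lattice translations as you do. One trivial repair: the command \texttt{\textbackslash eqml\{3ode\}} in your last paragraph should read \texttt{\textbackslash eqref\{3ode\}}.
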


\begin{proof}
Since $0$ is an apparent singularity, all solutions of (\ref{3ode}) are meromorphic in $\mathbb{C}$. If $y(z)\neq -y(-z)$ for some solution $y(z)$ of (\ref{3ode}), then $y(z)+y(-z)\neq 0$ is an even solution of (\ref{3ode}) and so its local exponent at $0$ is even, a contradiction with the fact that all local exponents of (\ref{3ode}) at $0$ are odd.
\end{proof}

Write the ODE (\ref{3ode}) as
\begin{equation}\label{3ode-L}
Ly=0,\quad \text{where}\quad L:=\frac{d^3}{dz^3}-(\alpha\wp(z)+B)\frac{d}{dz}+\beta\wp'(z).
\end{equation}
Now we apply Frobenius's method to prove the necessary part.

\begin{proof}[Proof of the necessary part of Theorem \ref{thm-noddleven-apparent}]
Suppose that $0$ is an apparent singularity.
Then as pointed out in Remark \ref{rrr}, (\ref{3ode}) has local solutions of the form
\begin{equation}\label{nloc-sol}y(z)=\sum_{j=0}^{\infty}c_j z^{2j-t}\quad\text{for }\;t\in \{n+l, l-1\},\quad \text{where }\;c_0\neq 0.\end{equation}
Here we only use the local solution with $t=n+l$ just as in Lemma \ref{lemma-app}.
Inserting (\ref{nloc-sol}) with $t=n+l$ and the Laurent expansion (\ref{eq-wp-ex}) of $\wp(z)$ into $Ly$, it follows from (\ref{eq-appapp})-(\ref{rec-app}) that
\begin{align}
\label{noddee}Ly=&\sum_{j=0}^{\infty}\Big(\phi_j c_j-(2j-2-n-l)Bc_{j-1}\\
&-\sum_{i=2}^{j}[\alpha(2j-2i-n-l)-\beta(2i-2)]B_ic_{j-i}\Big)z^{2j-n-l-3},\nonumber\end{align}
and so $Ly=0$ if and only if
\begin{align}\label{nrec-app}
\phi_j c_j=&(2j-2-n-l)Bc_{j-1}\\&
+\sum_{i=2}^{j}[\alpha(2j-2i-n-l)-\beta(2i-2)]B_ic_{j-i},\quad\forall j\geq 0,\nonumber
\end{align}
where $c_{-2}=c_{-1}:=0$ and
\begin{align*}
\phi_j:=&(2j-n-l)(2j-n-l-1)(2j-n-l-2)-\alpha(2j-n-l)-2\beta\\
=&8j(j-\tfrac{n+1}{2})(j-\tfrac{2n+3l+2}{2}).
\end{align*}
Note $2j-2-n-l\neq 0$ for any $j$ because $n+l$ is odd, and $\phi_j=0$ if and only if $j\in\{0, \frac{n+1}{2}, \frac{2n+3l+2}{2}\}$. This is the difference comparing to Lemma \ref{lemma-app}.

Again (\ref{nrec-app}) with $j=0$ holds automatically.
By (\ref{nrec-app}) and the induction argument, for $1\leq j\leq \frac{n+1}{2}-1$, $c_j$ can be uniquely solved as
\[\frac{c_j}{c_0}=c_j(B)\in \mathbb{Q}[g_2,g_3][B]\]
with degree $j$ in $B$ and homogenous weight $j$.
Consequently, the RHS of (\ref{nrec-app}) with $j=\tfrac{n+1}{2}$ equals $c_0$ multiplying a polynomial in $\mathbb{Q}[g_2,g_3][B]$ with degree $\frac{n+1}{2}$ and homogenous weight $\frac{n+1}{2}$. Define $P_{n,l}(B)$ to be the corresponding monic polynomial, then there is $r_0\in\mathbb{Q}\setminus\{0\}$ such that
\[\text{\it the RHS of (\ref{nrec-app}) with $j=\tfrac{n+1}{2}$ equals } c_0r_0P_{n,l}(B).\]
Thus $c_0\neq 0$ and (\ref{nrec-app}) with $j=\frac{n+1}{2}$ imply $P_{n,l}(B)=0$. Clearly (\ref{remark-n1l2}) can be computed directly in the above proof.

Since $\phi_j=0$ also at $j=\frac{2n+3l+2}{2}$, at the moment we can only conclude that $P_{n,l}(B)=0$ is a necessary but might not sufficient condition for (\ref{3ode}) having a solution of the form (\ref{nloc-sol}) with $t=n+l$.

Clearly the above argument shows that if (\ref{nrec-app}) holds for any $j\leq\frac{n+1}{2}-1$, then
\begin{align}\label{odd-appapp1}Ly=&-c_0r_0P_{n,l}(B)z^{-l-2}+\sum_{j=\frac{n+1}{2}+1}^{\infty}\Big(\phi_j c_j-(2j-2-n-l)Bc_{j-1}\\
&-\sum_{i=2}^{j}[\alpha(2j-2i-n-l)-\beta(2i-2)]B_ic_{j-i}\Big)z^{2j-n-l-3}\nonumber
\end{align}
holds for arbitrary choices of $c_0\neq 0$ and $c_{\frac{n+1}{2}}$. This fact will be used in the proof of the sufficient part.
\end{proof}

\begin{remark}\label{rrrmmm}
In the above proof, $\phi_j=0$ at $j=\frac{2n+3l+2}{2}$ will leads to another polynomial $\hat{P}_{n,l}(B)$ such that (\ref{3ode}) has a solution of the form (\ref{nloc-sol}) with $t=n+l$ if and only if $P_{n,l}(B)=\hat{P}_{n,l}(B)=0$. Furthermore, if we consider the local solution $y(t)$ in (\ref{nloc-sol}) with $t=l-1$, we will obtain a third polynomial $\tilde{P}_{n,l}(B)$ such that (\ref{3ode}) has a solution of the form (\ref{nloc-sol}) with $t=l-1$ if and only if $\tilde{P}_{n,l}(B)=0$. Therefore, Frobenius's method implies that $0$ is an apparent singularity if and only if \[P_{n,l}(B)=\hat{P}_{n,l}(B)=\tilde{P}_{n,l}(B)=0.\] This indicates that to promise the validity of the sufficient part of Theorem \ref{thm-noddleven-apparent}, we need to show
\[P_{n,l}(B)=0\quad\Longrightarrow\quad \hat{P}_{n,l}(B)=\tilde{P}_{n,l}(B)=0,\]
which seems too difficult to be proved via Frobenius's method.
\end{remark}

\begin{proof}[Proof of the sufficient part of Theorem \ref{thm-noddleven-apparent}] Suppose $P_{n,l}(B)=0$. Here we develop a new idea to prove that $0$ is an apparent singularity. We descend (\ref{3ode}) to $\mathbb{P}^1$ under the double cover $\wp: E_{\tau}\to \mathbb{P}^1$. Let $x=\wp(z)$, $\tilde{y}(x)=y(z)$ and denote
\[p(x):=4x^3-g_2x-g_3=4\prod_{i=1}^3(x-e_i)=\wp'(z)^2,\]
where $e_i:=\wp(\frac{\omega_i}{2})$ with $\omega_3:=\omega_1+\omega_2=1+\tau$.
Then
\[\frac{dy}{dz}=\wp'(z)\frac{d\tilde{y}}{dx},\qquad
\frac{d^2y}{dz^2}=p(x)\frac{d^2\tilde{y}}{dx^2}+\frac12p'(x)\frac{d\tilde{y}}{dx},\]
\begin{align*}
\frac{d^3y}{dz^3}=\wp'(z)\left[p(x)\frac{d^3\tilde{y}}{dx^3}
+\frac32p'(x)\frac{d^2\tilde{y}}{dx^2}+12x\frac{d\tilde{y}}{dx}\right],
\end{align*}
so
\begin{equation}\label{ndd-ee1}Ly=\wp'(z)\tilde{L}\tilde{y},\end{equation}
where
\[\tilde{L}:=p(x)D^3+3(6x^2-\tfrac{g_2}{2})D^2+[(12-\alpha)x-B]D+\beta,\;\, D:=d/dx.\]
Namely $y(z)$ solves (\ref{3ode}) if and only if
$\tilde{L}\tilde{y}(x)=0$.

Denote $k:=(l+n-1)/2$ in the following proof. For each $i\in \{1,2,3\}$, we claim that (\ref{3ode}) has a solution $y_{i-1}(z)$ of the form
\begin{equation}\label{eq-eqeq}
y_{i-1}(z)=(\wp(z)-e_i)^{\frac12}\sum_{j=0}^{k}C_j (\wp(z)-e_i)^{j}\;\text{with}\;(C_k, C_{\frac{l}{2}-1})\neq (0,0),
\end{equation}
where $C_{-1}:=0$ if $l=0$.
Once this claim is proved, then $(y_0,y_1,y_2)$ forms a basis of solutions of (\ref{3ode}) (this fact that $y_0,y_1,y_2$ are linearly independent can be easily seen from the first proof of Theorem \ref{thm-nodd-k4} below), so $0$ is an apparent singularity.

To prove this claim, we denote
\[u:=\wp(z)-e_i=x-e_i,\quad \theta_i:=3e_i^2-g_2/4.\]
Then we can rewrite
\[p(x)=4u(u^2+3e_iu+\theta_i),\quad 6x^2-\tfrac{g_2}{2}=6u^2+12e_iu+2\theta_i,\]
and so rewrite $\tilde{L}$ as
\begin{align}\label{nnn-3eq-eq7}
\tilde{L}=&4u(u^2+3e_iu+\theta_i)D^3+(18u^2+36e_iu+6\theta_i)D^2\\
&+[(12-\alpha)u+(12-\alpha)e_i-B]D+\beta.\nonumber
\end{align}
Clearly $D=d/dx=d/du$, and $y_{i-1}(z)$ is a solution of (\ref{3ode}) if and only if
\begin{equation}\label{eqeqe}\tilde{y}:=(x-e_i)^{\frac12}\sum_{j=0}^{k}C_j(x-e_i)^j
=\sum_{j=0}^{k}C_j u^{j+\frac12}\end{equation}
satisfies $\tilde{L}\tilde{y}=0$.
Inserting (\ref{eqeqe}) into $\tilde{L}\tilde{y}$ leads to
\begin{equation}\label{nndd-1}\tilde{L}\tilde{y}=\sum_{j=-2}^{k}\Big(a_j C_j-[(j+\tfrac32)B +b_j e_i] C_{j+1}+4(j+2)(j+\tfrac32)(j+\tfrac{5}{2}) \theta_i C_{j+2}\Big)u^{j+\frac12},
\end{equation}
where
\begin{equation}\label{eqeqe2}C_{k+1}=C_{k+2}=C_{-1}=C_{-2}:=0,\end{equation}
\begin{align*}a_j:=&4(j+\tfrac12)(j-\tfrac12)(j-\tfrac32)+18(j+\tfrac12)(j-\tfrac12)
+(12-\alpha)(j+\tfrac12)+\beta\\
=&4(j+l+\tfrac{n+3}{2})(j-\tfrac{l+n-1}{2})(j-\tfrac{l}{2}+1)\\
=&4(j+l+\tfrac{n+3}{2})(j-k)(j-k+\tfrac{n+1}{2}),\end{align*}
and
\begin{align*}b_j:=&-[12(j+\tfrac32)(j+\tfrac12)(j-\tfrac12)+36(j+\tfrac32)(j+\tfrac12)
+(12-\alpha)(j+\tfrac32)]\\
=&-(j+\tfrac{3}{2})(12j^2+36j+27-3l^2-3l-2n-3ln-n^2).\end{align*}
Here we used the expressions (\ref{alpha-nl})-(\ref{beta-nl}) of $\alpha, \beta$.
Therefore, $\tilde{L}\tilde{y}=0$  if and only if
\begin{align}\label{eqeqe1}
&a_j C_j\\
=&[(j+\tfrac32)B +b_j e_i] C_{j+1}-4(j+2)(j+\tfrac32)(j+\tfrac{5}{2}) \theta_i C_{j+2},\;\, -2\leq j\leq k.\nonumber
\end{align}
Note that (\ref{eqeqe1}) holds automatically for $j\in \{k, -2\}$ because $a_{k}=0$ and (\ref{eqeqe2}).
Set the weights of $B$, $e_i$, $g_2$ to be $1, 1, 2$ respectively and recall $\theta_i=3e_i^2-g_2/4$.

{\bf Step 1.} We consider the case $n=1$.

Then $k=\frac{l}{2}$ and (\ref{remark-n1l2}) gives $P_{1, l}(B)=B$, so $B=0$. Furthermore, a direct computation gives
\begin{align*}b_j
&=-12(j+\tfrac{3}{2})(j+\tfrac{l}{2}+2)(j-\tfrac{l}{2}+1)\\
&=-12(j+\tfrac{3}{2})(j+k+2)(j-k+1)\quad\text{for }\;n=1,\end{align*}
so $a_{k-1}=b_{k-1}=0$, which implies that (\ref{eqeqe1}) with $B=0$ holds automatically for $j=k-1=\frac{l}{2}-1$.
Note $a_j\neq 0$ for $-1\leq j\leq k-2$.
Letting $j=k-2$ in (\ref{eqeqe1}) with $B=0$ leads to
\[C_{k-2}=\frac{b_{k-2}}{a_{k-2}} e_i C_{k-1}-\frac{4k(k-\tfrac12)(k+\tfrac{1}{2})}{a_{k-2}}(3e_i^2-\tfrac{g_2}{4}) C_{k}.\]
Then by induction for $j=k-2,k-3,\cdots, 0$ in (\ref{eqeqe1}) with $B=0$, we conclude the existence of
\[\tilde{P}_{j}(e_i,g_2),\quad \hat{P}_j(e_i, g_2)\in \mathbb{Q}[e_i, g_2]\]
that are of homogenous weights $k-1-j$, $k-j$ respectively, such that
\begin{equation}\label{eqeqe3}
C_j=\tilde{P}_{j}(e_i,g_2)C_{k-1}+\hat{P}_j(e_i, g_2)C_k,\quad 0\leq j\leq k-2.\end{equation}
Finally, by (\ref{eqeqe1}) with $B=0$, $j=-1$ and $C_{-1}=0$, we obtain
\begin{align}\label{eqeqe4}
0=&b_{-1}e_iC_0-3\theta_iC_1\\
=&\tilde{P}_{-1}(e_i, g_2)C_{k-1}
+[b_{-1}e_i\hat{P}_0(e_i, g_2)-3\theta_i\hat{P}_1(e_i, g_2)]C_{k}.\nonumber
\end{align}
where
\[\tilde{P}_{-1}(e_i, g_2):=b_{-1}e_i\tilde{P}_{0}(e_i,g_2)-3\theta_i\tilde{P}_{1}(e_i,g_2)\in
\mathbb{Q}[e_i, g_2]\]
is of homogenous weight $k$. Then (\ref{eqeqe1}) with $B=0$ holds if and only if (\ref{eqeqe3})-(\ref{eqeqe4}) hold.
There are two cases.

{\bf Case 1-1.} $\tilde{P}_{-1}(e_i, g_2)\neq 0$. This should hold except for at most discrete $\tau$'s.
Then by letting
\[C_k=1,\quad C_{k-1}=-\frac{b_{-1}e_i\hat{P}_0(e_i, g_2)-3\theta_i\hat{P}_1(e_i, g_2)}{\tilde{P}_{-1}(e_i, g_2)},\]
(such that (\ref{eqeqe4}) holds)
and $C_j$, $0\leq j\leq k-2$ be given by (\ref{eqeqe3}), we conclude that $y_{i-1}(z)$ given by (\ref{eq-eqeq}) is a solution of (\ref{3ode}) with $B=0$. Note that the local exponent of this $y_{i-1}(z)$ is $-1-2k=-l-1=-l-n$.

{\bf Case 1-2.} $\tilde{P}_{-1}(e_i, g_2)=0$. This should hold for at most discrete $\tau$'s.
Then by letting
\[C_k=0,\quad C_{k-1}=1,\]
(such that (\ref{eqeqe4}) holds) and $C_j$, $0\leq j\leq k-2$ be given by (\ref{eqeqe3}), we conclude that $y_{i-1}(z)$ given by (\ref{eq-eqeq}) is a solution of (\ref{3ode}) with $B=0$. Note that the local exponent of this $y_{i-1}(z)$ is $-1-2(k-1)=-l+1$ in this case.

In conclusion, (\ref{3ode}) with $B=0$ has a solution $y_{i-1}(z)$ of the form (\ref{eq-eqeq}) for $i=1,2,3$. This proves that $0$ is an apparent singularity for the case $n=1$.

{\bf Step 2.} We consider the general case $n\geq 3$.

Then $a_j\neq 0$ for $k-\frac{n+1}{2}+1\leq j\leq k-1$,
so by induction for $j=k-1,\cdots, k-\frac{n+1}{2}+1$ in (\ref{eqeqe1}), we easily conclude the existence of
\[\hat{P}_j(B)=\hat{P}_j(B;e_i,g_2)\in \mathbb{Q}[e_i, g_2][B]\]
with degree $k-j$ in $B$ and homogenous weight $k-j$, such that
\begin{equation}\label{qeqe3}
C_j=\hat{P}_j(B;e_i,g_2)C_k,\quad\text{for }\; 0\leq k-\tfrac{n+1}{2}+1\leq j\leq k-1.\end{equation}

By (\ref{qeqe3}), we see that the RHS of (\ref{eqeqe1}) with $j=k-\frac{n+1}{2}$ is $C_k$ multiplying a polynomial of degree $\frac{n+1}{2}$ in $B$ and homogenous weight $\frac{n+1}{2}$ in $\mathbb{Q}[e_i, g_2][B]$. Denote the corresponding monic polynomial by $\hat{P}_{i;n,l}(B)$, then there is $r_i\in\mathbb{Q}\setminus\{0\}$ such that
\begin{equation}\label{111111}\text{\it the RHS of (\ref{eqeqe1}) with $j=k-\tfrac{n+1}{2}$ equals  }C_kr_i\hat{P}_{i;n,l}(B).\end{equation}
At the moment it seems that $\hat{P}_{i;n,l}(B)\in \mathbb{Q}[e_i, g_2][B]$ might be different for $i=1,2,3$.
Since the LHS of (\ref{eqeqe1}) with $j=k-\frac{n+1}{2}$ is $0$ because of $a_{k-\frac{n+1}{2}}=0$, the key step is

{\bf Step 2-1.} We prove that
\begin{equation}
\label{keypoint} P_{n,l}(B)=\hat{P}_{i;n,l}(B),\quad \forall\;i\in \{1,2,3\}.
\end{equation}
Consequently, since $P_{n,l}(B)=0$, we see that (\ref{eqeqe1}) with $j=k-\frac{n+1}{2}=\frac{l}{2}-1$ holds automatically for arbitrary choices of $C_k$ and $C_{\frac{l}{2}-1}$.

Recall (\ref{eq-wp-ex}) that
\[u=x-e_i=\wp(z)-e_i=z^{-2}\Big(1-e_iz^2+\sum_{j=2}^{+\infty}B_{j}z^{2j}\Big),\]
Inserting this into
\[y_{i-1}(z)=\tilde{y}(x)=\sum_{j=0}^{k}C_j u^{j+\frac12}=\sum_{j=0}^{\frac{n+l-1}{2}}C_j u^{j+\frac12},\]
we can rewrite $y_{i-1}(z)$ as follows (see (\ref{nloc-sol}))
\[y_{i-1}(z)=\sum_{j=0}^{\infty}c_j z^{2j-n-l},\quad\text{where }\; c_0=C_k,\]
and so $Ly_{i-1}(z)$ is of the form (\ref{noddee}).

On the other hand, since (\ref{eqeqe1}) holds for $k-\frac{n+1}{2}+1\leq j\leq k$, we see from (\ref{nndd-1}) that (note $k-\frac{n+1}{2}=\frac{l}{2}-1$)
\begin{align*}
\tilde{L}\tilde{y}=&-C_kr_i\hat{P}_{i;n,l}(B) u^{\frac{l}{2}-1+\frac12}
+\sum_{j=-1}^{\frac{l}{2}-2}\Big(a_j C_j\\
&-[(j+\tfrac32)B +b_j e_i] C_{j+1}+4(j+2)(j+\tfrac32)(j+\tfrac{5}{2}) \theta_i C_{j+2}\Big)u^{j+\frac12}\\
=& -C_kr_i\hat{P}_{i;n,l}(B)z^{1-l}+\sum_{j=1}^{+\infty} d_j z^{2j+1-l}.
\end{align*}
Consequently, we apply (\ref{ndd-ee1}) to obtain
\begin{align*}
Ly_{i-1}(z)&=\wp'(z)\tilde{L}\tilde{y}\\
&=\left(-2z^{-3}+2B_2z+\cdots\right)\Big(-C_kr_i\hat{P}_{i;n,l}(B)z^{1-l}+\sum_{j=1}^{+\infty} d_j z^{2j+1-l}\Big)\\
&=2C_kr_i\hat{P}_{i;n,l}(B)z^{-l-2}+\sum_{j=1}^{+\infty} \tilde{d}_j z^{2j-l-2}.
\end{align*}
Comparing this with the expression (\ref{noddee}) of $Ly_{i-1}(z)$, we see that (\ref{nrec-app}) holds for $j\leq \frac{n+1}{2}-1$. Then (\ref{odd-appapp1}) holds for $Ly_{i-1}(z)$, so
\[-c_0r_0P_{n,l}(B)=2C_kr_i\hat{P}_{i;n,l}(B).\]
Since $C_k=c_0$ can be arbitrary at the moment, $r_0, r_i$ are nonzero rational numbers, and $P_{n,l}(B)$, $\hat{P}_{i;n,l}(B)$ are both monic polynomials of degree $\frac{n+1}{2}$ in $B$, we conclude $r_0=-2r_i$ and $P_{n,l}(B)=\hat{P}_{i;n,l}(B)$, i.e. (\ref{keypoint}) holds.

{\bf Step 2-2.} Consider the case $l=0$. Then Step 2-1 shows that (\ref{eqeqe1}) with $j=-1$ holds automatically for arbitrary choice of $C_k$. Thus by letting $C_k=1$, we immediately see that (\ref{3ode}) has a solution $y_{i-1}(z)$ of the form (\ref{eq-eqeq}) for $i=1,2,3$, so $0$ is an apparent singularity.

{\bf Step 2-3.} Consider the case $l= 2$. Then Step 2-1 shows that (\ref{eqeqe1}) with $j=0$ holds automatically for arbitrary choices of $C_k$ and $C_0$.
Consequently, by (\ref{eqeqe1}) with $j=-1$ and $C_{-1}=0$, we obtain
\begin{align*}
0=&(\tfrac{B}{2}+b_{-1}e_i)C_0-3\theta_iC_1\\
=&(\tfrac{B}{2}+b_{-1}e_i)C_{0}
-3\theta_i\hat{P}_1(B;e_i, g_2)C_k,
\end{align*}
where we used (\ref{qeqe3}). Then by letting
\[C_k=1,\quad C_{0}=\frac{3\theta_i\hat{P}_1(B;e_i, g_2)}{\tfrac{B}{2}+b_{-1}e_i}\quad \text{if }\;\tfrac{B}{2}+b_{-1}e_i\neq 0,\]
and
\[C_k=0,\quad C_{0}=1\quad \text{if }\;\tfrac{B}{2}+b_{-1}e_i=0,\]
we conclude again that (\ref{3ode}) has a solution $y_{i-1}(z)$ of the form (\ref{eq-eqeq}) for $i=1,2,3$, so $0$ is an apparent singularity.

{\bf Step 2-4.} Consider the general case $l\geq 4$. Then Step 2-1 shows that (\ref{eqeqe1}) with $j=k-\frac{n+1}{2}=\frac{l}{2}-1$ holds automatically for arbitrary choices of $C_k$ and $C_{\frac{l}{2}-1}$.
Since $a_j\neq 0$ for $0\leq j\leq k-\frac{n+1}{2}-1=\frac{l}{2}-2$,
then by induction for $j=\frac{l}{2}-2,\cdots, 0$ in (\ref{eqeqe1}), we easily conclude the existence of
\[\tilde{P}_{j}(B;e_i,g_2),\quad \hat{P}_j(B;e_i, g_2)\in \mathbb{Q}[e_i, g_2][B]\]
that are of homogenous weights $k-\frac{n+1}{2}-j$, $k-j$ respectively, such that
\begin{equation}\label{e111qeqe3}
C_j=\tilde{P}_{j}(B;e_i,g_2)C_{\frac{l}{2}-1}+\hat{P}_j(B;e_i, g_2)C_k,\quad 0\leq j\leq \tfrac{l}{2}-2.\end{equation}
Again by (\ref{eqeqe1}) with $j=-1$ and $C_{-1}=0$, we obtain
\begin{align*}
0&=(\tfrac{B}{2}+b_{-1}e_i)C_0-3\theta_iC_1\\
&=:\tilde{P}_{-1}(B;e_i, g_2)C_{\frac{l}{2}-1}
+\hat{P}_{-1}(B;e_i, g_2)C_{k}.
\end{align*}
Then by letting
\[C_k=1,\quad C_{\frac{l}{2}-1}=-\frac{\hat{P}_{-1}(B;e_i, g_2)}{\tilde{P}_{-1}(B; e_i, g_2)}\quad\text{if }\;\tilde{P}_{-1}(B; e_i, g_2)\neq 0,\]
and
\[C_k=0,\quad C_{\frac{l}{2}-1}=1\quad \text{if }\;\tilde{P}_{-1}(B; e_i, g_2)=0,\]
we conclude again that (\ref{3ode}) has a solution $y_{i-1}(z)$ of the form (\ref{eq-eqeq}) for $i=1,2,3$. This proves that $0$ is an apparent singularity.
The proof is complete.
\end{proof}

\begin{lemma}\label{lemma-distinctzero}
Let $n\geq 1$ be odd and $l\geq 0$ be even. Then
for any $\tau\in i\mathbb{R}_{>0}$, $P_{n,l}(B)$ has $\frac{n+1}{2}$ real distinct zeros.

Consequently, $P_{n,l}(B)$ has $\frac{n+1}{2}$ distinct zeros expect for $\lceil \frac{n^2-1}{24}\rceil$ numbers of $\tau$'s modulo $SL(2,\mathbb{Z})$. In particular, this result holds for any
 \begin{equation}\label{eq-rect}\tau\in \Pi:=\Big\{\tfrac{a\tilde{\tau}+b}{c\tilde{\tau}+d} \,|\, \bigl(\begin{smallmatrix}a & b\\
c & d\end{smallmatrix}\bigr)\in SL(2,\mathbb{Z}), \tilde{\tau}\in i\mathbb{R}_{>0}\Big\}.\end{equation}
\end{lemma}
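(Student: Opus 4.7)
The plan is to adapt the Sturm-sequence strategy of Lemma \ref{lemma-real}, with two key modifications. First, instead of the ``many-term'' recursion (\ref{nrec-app}), I would work with the intrinsically three-term recursion (\ref{eqeqe1}) around $u=\wp(z)-e_i$; this is three-term for every $\tau$, not only for those where $g_3=0$. Second, I would exploit the index-independent identity $P_{n,l}(B)=\hat{P}_{i;n,l}(B)$ for each $i\in\{1,2,3\}$, proved in (\ref{keypoint}), which provides the crucial freedom to let the index $i$ depend on $\tau$.

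For each $\tau\in i\mathbb{R}_{>0}$ the roots $e_1,e_2,e_3$ of $4x^3-g_2x-g_3=0$ are real, distinct and sum to zero, so I would first show that the $e_i$ of smallest absolute value (the ``middle'' value) satisfies $\theta_i=3e_i^2-g_2/4<0$. Indeed, relabeling temporarily so that $e_{i_1}>e_{i_2}>e_{i_3}$ and writing $e_{i_3}=-a e_{i_1}$, the ordering forces $a\in(1/2,2)$, and using $g_2/4=\tfrac12(e_1^2+e_2^2+e_3^2)$ a direct computation yields
\[\theta_{i_2}=(2a-1)(a-2)\,e_{i_1}^2<0.\]
Fixing this $i$, I would then check that in the range $k-\tfrac{n+1}{2}+1\le j\le k-1$ (with $k=(n+l-1)/2$) one has $a_j<0$; combined with $\theta_i<0$ this gives the two properties from Lemma \ref{lemma-real}: (P1) the leading term of $C_j(B)=\hat{P}_j(B;e_i,g_2)\in\mathbb{R}[B]$ is a positive multiple of $(-1)^{k-j}B^{k-j}$, and (P2) whenever $C_{j+1}(B_0)=0$ for $B_0\in\mathbb{R}$ one has $C_j(B_0)C_{j+2}(B_0)<0$. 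The standard interlacing induction, decreasing from $j=k-1$, then shows that each $C_j(B)$ has $k-j$ real distinct roots interlacing with the roots of $C_{j+1}(B)$. Applying the same comparison at $j=k-\tfrac{n+1}{2}$ (where the RHS of (\ref{eqeqe1}) equals a nonzero rational multiple of $P_{n,l}(B)$) yields $\tfrac{n+1}{2}$ real distinct zeros of $P_{n,l}$, proving the first assertion.

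For the second and third assertions I would mimic Step 3 of Lemma \ref{lemma-real}. The discriminant $D(\tau)$ of $P_{n,l}(B)$ (as a polynomial in $B$) lies in $\mathbb{Q}[g_2,g_3]$ and is a modular form of weight $\tfrac{n^2-1}{2}$ with respect to $SL(2,\mathbb{Z})$. The first assertion gives $D(\tau)\neq 0$ on $i\mathbb{R}_{>0}$, hence $D\not\equiv 0$; since $\tfrac{n^2-1}{2}\equiv 0$ or $4\pmod{12}$ for odd $n$, the standard counting-zeros theorem for modular forms (see e.g. \cite{Serre}) bounds the number of zeros of $D$ modulo $SL(2,\mathbb{Z})$ by $\lceil\tfrac{n^2-1}{24}\rceil$. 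The final assertion follows because the zero locus of $D$ is $SL(2,\mathbb{Z})$-invariant and, by construction, disjoint from $i\mathbb{R}_{>0}$, hence disjoint from the whole orbit $\Pi$.

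The only step that is genuinely new compared to Lemma \ref{lemma-real} is the uniform existence, for every $\tau\in i\mathbb{R}_{>0}$, of some index $i$ with $\theta_i<0$. This is precisely where the $i$-independence (\ref{keypoint}) is essential, since the ``correct'' index $i^\ast$ may vary with $\tau$ and no single fixed $i$ works throughout $i\mathbb{R}_{>0}$; once the right $i$ has been chosen $\tau$-by-$\tau$, everything else is a faithful repetition of the Sturm/interlacing induction of Lemma \ref{lemma-real}, followed by the standard modular-form count.
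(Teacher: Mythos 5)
Your proposal is correct and follows essentially the same route as the paper: pass to the three-term recursion (\ref{eqeqe1}) via the identity (\ref{keypoint}), verify $\theta_i<0$ and $a_j<0$ to obtain the sign properties (P1)--(P2), run the Sturm/interlacing induction, and finish with the modular-form zero count of Step 3 of Lemma \ref{lemma-real}. The only cosmetic differences are that the paper fixes $i=3$ throughout, using the standard fact that $e_1>e_3>e_2$ for all $\tau\in i\mathbb{R}_{>0}$ (so, contrary to your closing remark, a single index does work uniformly on $i\mathbb{R}_{>0}$), and it disposes of $n=1$ separately via $P_{1,l}(B)=B$ since (\ref{keypoint}) is only set up for $n\geq 3$; neither point affects the validity of your argument.
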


\begin{proof}
Since $P_{1,l}(B)=B$, we only need to consider the case $n\geq 3$ and then (\ref{keypoint}) implies $P_{n,l}(B)=\hat{P}_{i;n,l}(B)$ for $i=1,2,3$. Here we use $i=3$ and prove this lemma via $\hat{P}_{3;n,l}(B)$.

The following proof is similar to that of Lemma \ref{lemma-real}.
First we consider the case $\tau\in i\mathbb{R}_{>0}$ and prove that $\hat{P}_{3;n,l}(B)$ has $\frac{n+1}{2}$ real distinct roots. This assertion will be proved by using the recursive formula (\ref{eqeqe1}) of $\hat{P}_{3;n,l}(B)$. This is the advantage of the recursive formula of $\hat{P}_{3;n,l}(B)$ comparing to that of $P_{n,l}(B)$ (i.e. it seems impossible to use the recursive formula (\ref{nrec-app}) of $P_{n,l}(B)$ to prove that $P_{n,l}(B)$ has $\frac{n+1}{2}$ real distinct roots).

Since $\tau\in i\mathbb{R}_{>0}$, it is well known that
\[e_1>e_3>e_2,\quad g_2>0,\]
and so
\[\theta_3=3e_3^2-g_2/4=(e_3-e_1)(e_3-e_2)<0.\]
Recall the recursive formula (\ref{eqeqe1}) with $i=3$:
\begin{align}\label{eqeqe1=i3}
&a_j C_j\\
=&[(j+\tfrac32)B +b_j e_3] C_{j+1}-4(j+2)(j+\tfrac32)(j+\tfrac{5}{2}) \theta_3 C_{j+2},\;\, -1\leq j\leq k,\nonumber
\end{align}
with $C_{-1}=C_{k+1}=C_{k+2}=0$ and
\[a_j=4(j+l+\tfrac{n+3}{2})(j-k)(j-k+\tfrac{n+1}{2}).\]
Recalling (\ref{qeqe3}), we let $C_k=1$ and then
\begin{equation}\label{qeqe3=i3}
C_j=C_j(B)=\hat{P}_j(B;e_3,g_2)\in\mathbb{R}[B]\end{equation}
is a polynomial of degree $k-j$ in $B$ for $k-\tfrac{n+1}{2}+1\leq j\leq k-1$.

Write $k_0:=k-\tfrac{n+1}{2}=\frac{l}{2}-1$ for convenience. Since $a_j<0$ for $k_0+1\leq j\leq k-1$ and $\theta_3<0$, we easily obtain the following properties for all $k_0+1\leq j\leq k-1$ from (\ref{eqeqe1=i3}):

{\bf (P1)} Up to a positive constant, the leading term in $C_{j}(B)$ is $(-1)^{k-j}B^{k-j}$.

{\bf (P2)} If $C_{j+1}(B)=0$ and $C_{j+2} (B)\neq 0$ for $B\in \mathbb{R}$, then
$C_{j} (B)C_{j+2} (B)<0$.
Then the same proof as Step 1 of Lemma \ref{lemma-real} implies that
for $k_0+1\leq j\leq k-1$, $C_j(B)$ has real distinct roots, denoted by $r_1^j<\cdots<r_{k-j}^j$, such that
\begin{equation}\label{root1=i3}
r_1^j<r_1^{j+1}<r_2^j<\cdots<r_{k-j-1}^{j}<r_{k-j-1}^{j+1}<r_{k-j}^j.
\end{equation}

Recall (\ref{111111}) that
\begin{equation}\label{Toda-eq-3-1==3}r_3\hat{P}_{3;n,l}(B)=[(k_0+\tfrac32)B +b_{k_0} e_3] C_{k_0+1}-4(k_0+2)(k_0+\tfrac32)(k_0+\tfrac{5}{2}) \theta_3 C_{k_0+2},\end{equation}
where $r_3\in\mathbb{Q}\setminus \{0\}$.
Recall {\bf (P1)} that
\[
\lim_{B\rightarrow-\infty} C_{k_0+2}(B) =+\infty, \; \lim_{B \rightarrow
+\infty} C_{k_0+2}(B) = (-1)^{k-k_0-2}\infty.
\]
By (\ref{root1=i3}) with $j=k_0+1$, we have
\[
C_{k_0+2}(r_i^{k_0+1})\sim (-1)^{i-1},\quad \forall i\in [1,k-k_0-1],
\]
and so it follows from (\ref{Toda-eq-3-1==3}) and $C_{k_0+1}(r_i^{k_0+1})=0$ that
\[r_3\hat{P}_{3;n,l}(r_i^{k_0+1})\sim C_{k_0+2}(r_i^{k_0+1})\sim (-1)^{i-1}, \quad \forall i\in [1,k-k_0-1].\]
On the other hand, up to a positive constant, the leading term of $r_3\hat{P}_{3;n,l}(B)$ is $(-1)^{k-k_0-1}B^{k-k_0}$, the same as that of $BC_{k_0+1}$, which implies
\[\lim_{B \rightarrow-\infty}r_3\hat{P}_{3;n,l}(B) =-\infty, \; \lim_{B \rightarrow
+\infty} r_3\hat{P}_{3;n,l}(B) = (-1)^{k-k_0-1} \infty.\]
Therefore,
$r_3\hat{P}_{3;n,l}(B)$ has $k-k_0=\frac{n+1}{2}$ real distinct roots.

This proves that for $\tau\in i\mathbb{R}_{>0}$, $P_{n,l}(B)$ has $\frac{n+1}{2}$ real distinct roots. Since $P_{n,l}(B)\in\mathbb{Q}[g_2(\tau), g_3(\tau)][B]$ is of homogenous weight $\frac{n+1}{2}$, the same proof as Step 3 of Lemma \ref{lemma-real} implies that $P_{n,l}(B)$ has $\frac{n+1}{2}$ distinct roots expect for $\lceil \frac{n^2-1}{24}\rceil$ numbers of $\tau$'s modulo $SL(2,\mathbb{Z})$, and in particular, this result holds for any $\tau\in\Pi$.
\end{proof}

As a consequence of the proof of Theorem \ref{thm-noddleven-apparent}, we have

\begin{theorem}[=Theorem \ref{thm-ode-mo}-(2)]\label{thm-nodd-k4}
Suppose $n\geq 1$ is odd, $l\geq 0$ is even and $P_{n,l}(B)=0$, i.e. $0$ is an apparent singularity of \eqref{3ode}. Then \eqref{3ode} has a basis of solutions $(y_0,y_1,y_2)$ of the form \eqref{eq-eqeq}, i.e.
\begin{equation}\label{eq-eqeq1}
y_{i-1}(z)=(\wp(z)-e_i)^{\frac12}\sum_{j=0}^{k_i}C_{i,j}(\wp(z)-e_i)^{j},\quad i=1,2,3,
\end{equation}
with $-1-2k_i\in \{-n-l, -l+1\}$ and $C_{i,k_i}=1$.

Furthermore, with respect to this basis, the monodromy matrices are given by
\begin{equation}\label{n-3eq-eq1}
N_1=\begin{pmatrix}1&&\\
&-1&\\
&&-1\end{pmatrix},\quad N_2=\begin{pmatrix}-1&&\\
&1&\\
&&-1\end{pmatrix}.
\end{equation}
Consequently, the monodromy group $M:=\langle N_1, N_2\rangle$ is the Klein four-group, and \eqref{3ode} has no nontrivial elliptic solutions.
\end{theorem}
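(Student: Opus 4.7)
The plan is to combine the basis construction already carried out in the proof of Theorem \ref{thm-noddleven-apparent} with the classical transformation law for $(\wp(z)-e_i)^{1/2}$ under period shifts. For each $i\in\{1,2,3\}$, Steps 2-2--2-4 (and Step 1 when $n=1$) of the proof of Theorem \ref{thm-noddleven-apparent}, carried out at the half-period $\omega_i/2$, produce a solution $y_{i-1}(z)$ of the form (\ref{eq-eqeq1}) with leading coefficient $C_{i,k_i}=1$, where $-1-2k_i\in\{-n-l,-l+1\}$ according to whether one normalizes $C_k=1$ or, when that choice is forced to vanish, the next admissible coefficient is taken to be $1$.

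Next I would analyze the monodromy. Each $y_{i-1}(z)$ factors as $(\wp(z)-e_i)^{1/2}\,p_i(\wp(z))$ with $p_i$ a polynomial, so $p_i(\wp(z))$ is doubly periodic. Consequently,
\[
y_{i-1}(z+\omega_j)^2=(\wp(z+\omega_j)-e_i)\,p_i(\wp(z+\omega_j))^2=y_{i-1}(z)^2,
\]
so $y_{i-1}(z+\omega_j)=\epsilon_{ij}\,y_{i-1}(z)$ for some sign $\epsilon_{ij}\in\{\pm 1\}$. Using the classical Weierstrass identity $\wp(z)-e_i=(\sigma_i(z)/\sigma(z))^2$ together with the quasi-periodicity of $\sigma$ and the co-sigma $\sigma_i$ (derived via Legendre's relation $\eta_1\omega_2-\eta_2\omega_1=\pi i/2$), one verifies the sign pattern
\[
\epsilon_{ij}=+1\ \text{if}\ i=j,\qquad \epsilon_{ij}=-1\ \text{if}\ i\neq j,
\]
where $i\in\{1,2,3\}$, $j\in\{1,2\}$, with the $i=3$ case handled via $\omega_3=\omega_1+\omega_2$. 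Reading off the three pairs $(\epsilon_{i,1},\epsilon_{i,2})=(1,-1),(-1,1),(-1,-1)$ yields the diagonal monodromy matrices displayed in (\ref{n-3eq-eq1}).

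Since the three joint eigenvalue pairs are pairwise distinct, $y_0,y_1,y_2$ are simultaneous eigenvectors of $N_1,N_2$ with pairwise different joint eigenvalues and hence are linearly independent, forming a basis. The group $\langle N_1,N_2\rangle$ consists of the four commuting involutions $\{I_3,N_1,N_2,N_1N_2=\operatorname{diag}(-1,-1,1)\}$, which is precisely the Klein four-group $K_4$. Finally, any elliptic solution $y=a_0y_0+a_1y_1+a_2y_2$ is fixed by both $N_1$ and $N_2$; the diagonal form forces $a_0=a_1=a_2=0$, so no nontrivial elliptic solution exists.

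The main delicate point is pinning down the sign pattern $\epsilon_{ij}$: the squared identity $y_{i-1}(z+\omega_j)^2=y_{i-1}(z)^2$ is immediate from the double periodicity of $\wp$, but identifying which sign occurs requires carefully unwinding the sigma/co-sigma quasi-periodicities and invoking Legendre's relation. Once that classical computation is in place, all remaining assertions follow by direct readoff from the resulting diagonal matrices.
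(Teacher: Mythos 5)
Your proposal is correct and follows essentially the same route as the paper's first proof of this theorem: take the basis $y_{i-1}=(\wp(z)-e_i)^{1/2}\cdot(\text{polynomial in }\wp)$ already produced in the proof of Theorem \ref{thm-noddleven-apparent}, read off the monodromy from the classical sign pattern of $(\wp(z)-e_i)^{1/2}$ under the period shifts $z\mapsto z+\omega_j$ (the paper simply cites this transformation law as \eqref{eq-k4}, while you rederive it via the squaring trick and the sigma/co-sigma quasi-periodicities), and deduce linear independence, the Klein four-group structure, and the nonexistence of elliptic solutions exactly as the paper does. The only cosmetic point is that in the paper's normalization ($\eta_k=2\zeta(\omega_k/2)$ with full periods $1,\tau$) Legendre's relation reads $\eta_1\omega_2-\eta_2\omega_1=2\pi i$ rather than $\pi i/2$; this convention issue does not affect your sign pattern, which is correct.
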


\begin{remark}\label{rrrr}
Let $\tau\in i\mathbb{R}_{>0}$ in Theorem \ref{thm-nodd-k4}. Then Lemma \ref{lemma-distinctzero} proves $B\in\mathbb{R}$. Together with $e_1, e_2, e_3, g_2\in\mathbb{R}$ for $\tau\in i\mathbb{R}_{>0}$, it is easy to see from the proof of (\ref{eq-eqeq1}) or (\ref{eq-eqeq}) in Theorem \ref{thm-noddleven-apparent} that $C_{i,j}\in\mathbb{R}$ for all $i,j$.

On the other hand, it follows from the expression of $\wp(z)=\wp(z;\tau)$
\[\wp(z;\tau)=\frac{1}{z^{2}}+\sum_{(m,n)\in\mathbb{Z}^2\setminus
\{(0,0)\}}\left(  \frac{1}{(z-m-n\tau)^{2}}-\frac{1}{(m+n\tau)^{2}}\right) \]
and $\overline{\tau}=-\tau$ that $\overline{\wp(z)-e_i}=\wp(\overline{z})-e_i$ and so
\[\overline{(\wp(z)-e_i)^{\frac12}}=\pm(\wp(\overline{z})-e_i)^{\frac12}.\]
From here and (\ref{eq-eqeq1}), we conclude that $\overline{y_{i-1}(z)}=\pm y_{i-1}(\overline{z})$ and so
\begin{equation}\label{eff}
|y_{i-1}(z)|=|y_{i-1}(\overline{z})|,\quad  i=1,2,3.
\end{equation}
This property will play the crucial role in our proof of the symmetry of solutions for the $SU(3)$ Toda system \eqref{Toda} in a forthcoming work.
\end{remark}

\begin{proof}[Proof of Theorem \ref{thm-nodd-k4}] It suffices to prove (\ref{n-3eq-eq1}). Here we give two proofs.

{\bf The first proof:} This proof uses the expression of $y_j$'s.
Denote
\[\wp_i(z):=(\wp(z)-e_i)^{1/2}\quad\text{for  }i=1,2,3.\]
It is well known that $\wp_i(z)$ is an odd meromorphic function and satisfies
\begin{equation}\label{eq-k4}(\wp_1,\wp_2,\wp_3)(z+\omega_j)
=(\wp_1,\wp_2,\wp_3)(z)N_j,\quad j=1,2,\end{equation}
where $(N_1, N_2)$ is given by (\ref{n-3eq-eq1}).
From here and (\ref{eq-eqeq1}), we see that $y_{i-1}(z)$ satisfies the same transformation law (\ref{eq-k4}) as $\wp_i(z)$, i.e.
\[(y_0,y_1,y_2)(z+\omega_j)
=(y_0,y_1,y_2)(z)N_j,\quad j=1,2.\]
This implies that $(y_0, y_1, y_2)$ are \emph{linearly independent} and the monodromy matrices of (\ref{3ode}) under this basis $(y_0, y_1, y_2)$ are given by (\ref{n-3eq-eq1}), and so the monodromy group $\langle N_1, N_2\rangle$ is the Klein four-group.

{\bf The second proof:} This proof does not use the expression of $y_j$'s and so might be helpful for other problems.
Recalling (\ref{ndd-ee1}),
we consider the corresponding equation \begin{equation}\label{n-3eq-eq7}
D^3\tilde{y}+\frac{3(6x^2-\frac{g_2}{2})}{p(x)}D^2\tilde{y}
+\frac{(12-\alpha)x-B}{p(x)}D\tilde{y}+\frac{\beta}{p(x)}\tilde{y}=0\;\;\text{on }\mathbb{P}^1
\end{equation}
of (\ref{3ode}) under the double cover $\wp: E_{\tau}\to \mathbb{P}^1$, i.e. $x=\wp(z)$ and $\tilde{y}(x)=y(z)$.

Clearly under our assumption, any solution of (\ref{n-3eq-eq7}) has no logarithmic singularities either. Take a base point $x_{0}
\not \in \{e_1,e_2,e_3,\infty \}$ close to $\infty$ and consider the monodromy representation $\rho: \pi_1(\mathbb{C}\setminus\{e_1,e_2,e_3\},x_0)\to GL(3,\mathbb{C})$ of (\ref{n-3eq-eq7}). Let
$\gamma_{j}\in \pi_{1}(\mathbb{C}\backslash \{e_1,e_2,e_3\},x_{0})$ be a simple loop
encircling the singular point $e_j$ in the counterclockwise direction, and $\gamma_{\infty}$ be a simple
loop around $\infty$ clockwise such that%
\[
\gamma_{1}\gamma_{2}\gamma_{3}\gamma_{\infty}=Id\quad\text{in}\;
\pi_{1}(\mathbb{C}\backslash \{e_1,e_2,e_3\},x_{0}).
\]
Of course we require that all these loops do not intersect except at the base
point $x_{0}$. Let $\sigma_{j}=\rho(\gamma_j)$ and $\sigma_{\infty}=\rho(\gamma_{\infty})$ be the monodromy matrices
with respect to any basis of solutions of (\ref{n-3eq-eq7}). Then
$\sigma_1\sigma_2\sigma_3\sigma_{\infty}=I_3$, where $I_3=\operatorname{diag}(1,1,1)$ denotes the identity matrix.

Since the local exponents of (\ref{n-3eq-eq7}) at $\infty$ are $\frac{-n-l}{2}, \frac{1-l}{2}, \frac{n+2l+2}{2}$ (i.e. all in $\mathbb{Z}+\frac12$), and any solution of (\ref{n-3eq-eq7}) has no logarithmic singularities, it follows that $\sigma_{\infty}=-I_3$. Similarly, since the local exponents of (\ref{n-3eq-eq7}) at $e_j$ are $0,\frac12, 1$, it follows that
\begin{equation}\label{neqeq}\sigma_{j}=P_j \begin{pmatrix}1&&\\
&1&\\
&&-1\end{pmatrix} P_j^{-1},\quad j=1,2,3,\end{equation}
where $P_j\in GL(3,\mathbb{C})$ are connection matrices. Thus $\sigma_j^2=I_3$ for all $j$.

Now under the transformation $x=\wp(z)$, it is
easy to see that the fundamental cycle $z\to z+\omega_1$ (resp. $z\to z+\omega_2$) in $\pi_1(E_{\tau})$ is mapped to a simple loop in
$\pi_{1}(\mathbb{C}\backslash \{e_1,e_2,e_3\},x_{0})$ which separates $\{e_2,e_3\}$ from
$\{e_1,\infty \}$ (resp. separates $\{e_1,e_3\}$ from $\{e_2,\infty \}$).
Thus $N_1=\sigma_1\sigma_{\infty}=-\sigma_1$ and $N_2=\sigma_2\sigma_{\infty}=-\sigma_2$. In particular, $N_1^2=N_2^2=I_3$.

Clearly by $N_1=-\sigma_1$ and (\ref{neqeq}), we can take a basis of solutions of (\ref{3ode}) such that
\begin{equation}\label{n-3eq-eq2}N_1=\begin{pmatrix}1&&\\
&-1&\\
&&-1\end{pmatrix}.\end{equation}
Then $N_1N_2=N_2N_1$ (see (\ref{abelianm})) and $N_2^2=I_3$ lead to
\[N_2=\begin{pmatrix}d&&\\
&e&f\\
&g&h\end{pmatrix},\]
where $d=\pm 1$ and $\bigl(\begin{smallmatrix}e & f\\
g & h\end{smallmatrix}\bigr)^2=I_2$.
We claim that $d=-1$.

Indeed, if $d=1$, then by $N_2=-\sigma_2$ and (\ref{neqeq}), we see that the eigenvalues of $\bigl(\begin{smallmatrix}e & f\\
g & h\end{smallmatrix}\bigr)$ are $-1, -1$ and $\bigl(\begin{smallmatrix}e & f\\
g & h\end{smallmatrix}\bigr)$ can be diagonalized, so $\bigl(\begin{smallmatrix}e & f\\
g & h\end{smallmatrix}\bigr)=-I_2$, i.e.
\[
N_2=N_1=\begin{pmatrix}1&&\\
&-1&\\
&&-1\end{pmatrix}.\]
Consequently, it follows from $\sigma_j=-N_j$ for $j=1,2$ that
\[\sigma_1=\sigma_2=\begin{pmatrix}-1&&\\
&1&\\
&&1\end{pmatrix}.\]
This together with $\sigma_1\sigma_2\sigma_3\sigma_{\infty}=I_3$ and $\sigma_{\infty}=-I_3$ implies $\sigma_{3}=-I_3$, clearly a contradiction with (\ref{neqeq}) for $j=3$.

Therefore, $d=-1$ and then the eigenvalues of $\bigl(\begin{smallmatrix}e & f\\
g & h\end{smallmatrix}\bigr)$ are $1,-1$. So there is $P\in GL(2,\mathbb{C})$ such that
\[P \begin{pmatrix}
e&f\\
g&h\end{pmatrix}P^{-1}=\begin{pmatrix}
1&\\
&-1\end{pmatrix},\]
namely
\[
\begin{pmatrix}
1&\\
&P\end{pmatrix}N_1\begin{pmatrix}
1&\\
&P^{-1}\end{pmatrix}=\begin{pmatrix}1&&\\
&-1&\\
&&-1\end{pmatrix},\]
\[
\begin{pmatrix}
1&\\
&P\end{pmatrix}N_2\begin{pmatrix}
1&\\
&P^{-1}\end{pmatrix}=\begin{pmatrix}-1&&\\
&1&\\
&&-1\end{pmatrix}.\]
The proof is complete.
\end{proof}

\begin{remark}
It seems difficult to prove whether the $y_j(z)$'s in Theorem \ref{thm-nodd-k4} have only simple zeros or not via the ODE (\ref{3ode}). We can prove in another paper \cite{CL-Toda} that by using the deep connection between the ODE (\ref{3ode}) and the $SU(3)$ Toda system \eqref{Toda}, the following result can be easily proved by studying the bubbling phenomena of solutions of the Toda system:
\end{remark}

\begin{Theorem} \cite{CL-Toda}
Each $y_j(z)$ in \eqref{eq-eqeq1} has only simple zeros for any $j\in\{0,1,2\}$.
\end{Theorem}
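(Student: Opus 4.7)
The plan is to use the correspondence, established in the authors' forthcoming work, between a unitary basis of solutions $(y_0,y_1,y_2)$ of \eqref{3ode} (as in \eqref{eq-eqeq1}) and an even solution $(u,v)$ of the Toda system \eqref{Toda}. Concretely, the developing map $f=[y_0:y_1:y_2]:E_\tau\to\mathbb{CP}^2$ produces explicit formulas of the shape
\[
e^{u}=C_1\,\frac{|W(y_0,y_1,y_2)|^{4/3}}{\bigl(|y_0|^2+|y_1|^2+|y_2|^2\bigr)^{2}},\qquad
e^{v}=C_2\,\frac{|W(y_0,y_1,y_2)|^{2/3}\bigl(|y_0|^2+|y_1|^2+|y_2|^2\bigr)}{\bigl(|W_{12}|^{2}+|W_{20}|^{2}+|W_{01}|^{2}\bigr)^{2}},
\]
where $W_{jk}:=y_jy_k'-y_j'y_k$, and these satisfy \eqref{Toda} on $E_\tau\setminus\{0\}$ precisely because the monodromy is the Klein four-group (and hence unitary) by Theorem \ref{thm-nodd-k4}.

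Assume for contradiction that some $y_{j_0}$ has a zero of order $m\geq 2$ at a point $z_0\in E_\tau\setminus\{0\}$. I would first use $\det W(y_0,y_1,y_2)\equiv C\neq 0$ together with the linear independence in \eqref{eq-eqeq1} to rule out a common zero of $y_0,y_1,y_2$ at $z_0$; in particular, the denominator $|y_0|^2+|y_1|^2+|y_2|^2$ is smooth and strictly positive near $z_0$. Then a rescaling of $u$ around $z_0$ (in the spirit of Jost--Lin--Wang and Lin--Wei--Yang) yields a bubble: an entire solution of the SU(3) Toda system on $\mathbb{C}$ with finite total curvature, carrying at the origin a Dirac singularity of strength proportional to $m-1$, inherited directly from the order of vanishing of $y_{j_0}$ through the factor $\bigl(|y_0|^2+|y_1|^2+|y_2|^2\bigr)^{-2}$.

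The main obstacle is to make precise the quantization of masses that forces the contradiction. By the classification of entire Toda solutions with finite total curvature, the local mass at the bubble's origin is quantized and strictly positive when $m\geq 2$; but the global Dirac data in \eqref{Toda} is supported only at $0$, so no extra point mass may appear at $z_0\neq 0$. This contradiction forces $m=1$, establishing simplicity of the zeros of $y_{j_0}$. The same argument applied to the dual equation \eqref{3ode-dual} via \eqref{eq: data} and the denominator $|W_{12}|^{2}+|W_{20}|^{2}+|W_{01}|^{2}$ of $e^{v}$ rules out multiple zeros of the auxiliary Wronskians, confirming that the same conclusion propagates to every member of the basis and completing the proof.
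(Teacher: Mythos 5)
There is a genuine gap here, and it is worth noting first that the paper itself does not prove this theorem: it is explicitly deferred to a forthcoming work, with only the indication that it follows ``by studying the bubbling phenomena of solutions of the Toda system.'' Your strategy therefore matches the announced one in spirit, but the central mechanism you propose does not work. In the $SU(3)$ correspondence the singular/concentration behavior of $(u,v)$ at a point $z_0\neq 0$ is governed by the \emph{common} zeros of $(y_0,y_1,y_2)$ (through $\Lambda_1=\sum_j|y_j|^2$) and the common zeros of the Wronskians $(W_{12},W_{20},W_{01})$ (through $\Lambda_2=\sum|W_{jk}|^2$), not by the order of vanishing of an individual $y_{j_0}$. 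You yourself observe, correctly, that $\det W(y_0,y_1,y_2)\equiv C\neq0$ forces $\Lambda_1>0$ near $z_0$; but then the factor $\Lambda_1^{-2}$ is smooth and bounded there and cannot ``inherit'' a Dirac singularity of strength $m-1$ from $y_{j_0}$. Your argument is internally inconsistent on exactly this point. The same objection applies to the dual side: if $y_{j_0}$ has a double zero at $z_0$ while $y_k(z_0)\neq0$ for $k\neq j_0$, then only the two Wronskians $W_{j_0k}$ vanish (simply) at $z_0$, while $W_{kk'}$ with $k,k'\neq j_0$ does not, so $\Lambda_2(z_0)\neq0$ as well. Consequently $(u,v)$ remains smooth at $z_0$ no matter how high the order of vanishing of a single basis element is, no bubble forms, and the quantization-of-mass contradiction you invoke never materializes. (Secondarily, your displayed formula for $e^{u}$ is also off: for the $SU(3)$ system the numerator of $e^{u}$ should carry $\Lambda_2=\sum|W_{jk}|^2$, not a power of the constant Wronskian alone.)

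To detect a double zero of a \emph{specific} eigenfunction $y_{j_0}$ of the Klein-four monodromy you would need a finer invariant than the smoothness of $(u,v)$ --- for instance an argument exploiting the explicit form \eqref{eq-eqeq1} (oddness, the distribution of zeros among half-periods, the reality property \eqref{eff}), or a degeneration/blow-up analysis of a \emph{family} of Toda solutions in which the putative double zero forces some forbidden local mass in the limit. As written, the proposal does not supply such a mechanism, so the claimed contradiction is not established.
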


\section{Monodromy theory for the case $n$ even}

\label{sec-monoth}

In this section we always assume that $n\geq 0$ is even, $l\in\mathbb{N}$ and $n+l\geq 1$.
We want to study the monodromy of (\ref{3ode}) and its dual equation (\ref{3ode-dual}) and prove Theorem \ref{thm-ode-mono}. The idea of the following proof seems new: We use the even elliptic solution of (\ref{3ode}) to transform the third order dual equation (\ref{3ode-dual}) to a second order ODE, and compute the monodromy of (\ref{3ode}) and the dual equation (\ref{3ode-dual}) by studying this new second order ODE.

First we have the following simple observation, which indicates the essential difference from the case $n$ odd that are studied in Sections \ref{sec-unitary}-\ref{sec-Klein}.

\begin{lemma}\label{Lem-app}
$0$ is always an apparent singularity of both \eqref{3ode} and its dual equation \eqref{3ode-dual}.
\end{lemma}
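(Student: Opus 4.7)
The strategy is to perform a parity analysis of the three local exponents at $0$, using the $z\mapsto -z$ symmetry of both \eqref{3ode} and its dual \eqref{3ode-dual}, and to invoke the even elliptic solutions provided by Theorems \ref{even-elliptic-y1}, \ref{even-elliptic-y11}, and \ref{even-elliptic-Y1}. I would first note that since $\wp(z)$ has only even and $\wp'(z)$ has only odd powers in its Laurent expansion at $0$, the Frobenius recursion for an ansatz $y=z^a\sum_{j\ge 0}c_j z^j$ in \eqref{3ode} couples $c_k$ only to $c_{k-2i}$ with $i\ge 0$, and similarly for \eqref{3ode-dual}. Hence, as in Remark \ref{rrr}, one may restrict to the parity-consistent ansatz $y=z^a\sum_{k\ge 0}\tilde c_k z^{2k}$; potential logarithmic obstructions then arise only from indices $j$ of the same parity as $a$, i.e.\ only when some larger local exponent has the same parity as $a$.

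For \eqref{3ode} with $n$ even, the exponents $(\varsigma_1,\varsigma_2,\varsigma_3)=(-n-l,\,1-l,\,n+2l+2)$ have parities (even, odd, even) when $l$ is even and (odd, even, even) when $l$ is odd. In both cases exactly one of the non-maximal exponents is isolated in parity (it does not match any strictly larger exponent), so Frobenius yields a log-free local solution at that exponent with no obstruction. The largest exponent $\varsigma_3$ always admits a log-free local solution. The remaining exponent---which is $\varsigma_1$ when $l$ is even and $\varsigma_2$ when $l$ is odd---shares its parity with $\varsigma_3$ and a priori faces a logarithmic obstruction at $j=\varsigma_3-\varsigma_i$; however, it is precisely the exponent at which Theorem \ref{even-elliptic-y11} (for $l$ even) or Theorem \ref{even-elliptic-y1} (for $l$ odd) provides an even elliptic---hence entirely meromorphic---solution of \eqref{3ode}. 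Three linearly independent log-free local solutions at $0$ then yield that $0$ is apparent for \eqref{3ode}.

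For the dual equation \eqref{3ode-dual} the cleanest route is to invoke the duality principle recorded in \eqref{eq: data}: a direct computation shows that the dual of the dual is the original (since $p_1'-(p_1'-p_0)=p_0$), so every solution of \eqref{3ode-dual} is a linear combination of Wronskians $W(y_i,y_j)$ of solutions of \eqref{3ode}. Meromorphicity of all solutions of \eqref{3ode} at $0$ therefore forces meromorphicity of their Wronskians, hence of all solutions of \eqref{3ode-dual}. Alternatively, one may repeat the parity count for the dual exponents $(-n-2l,\,l+1,\,n+l+2)$, using the even elliptic solution $Y_0$ from Theorem \ref{even-elliptic-Y1} at $\tilde\varsigma_1=-n-2l$ to kill the one same-parity obstruction and observing that the remaining non-maximal exponent is isolated in parity.

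The main obstacle I foresee is the first step: verifying cleanly that the Frobenius recursion for \eqref{3ode} (and its dual) decouples into even- and odd-indexed subsystems, so that setting $\tilde c_k=0$ for all odd $k$ is consistent and the odd-index relations are automatically satisfied. This is a routine consequence of the parities of the Laurent series of $\wp$ and $\wp'$, but it must be stated precisely before the parity count can legitimately rule out the logarithmic obstructions at the odd-parity resonance indices.
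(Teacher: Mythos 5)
Your proposal is correct and follows essentially the same route as the paper: the parity decoupling of the Frobenius recursion is exactly the content of Remark \ref{rrr}, the parity-isolated exponent ($\varsigma_1$ for $l$ odd, $\varsigma_2$ for $l$ even) yields a log-free local solution directly, the remaining obstruction is killed by the even elliptic solution of Theorem \ref{even-elliptic-y1} or \ref{even-elliptic-y11}, and the dual equation is handled via \eqref{eq: data}. The one step you flag as needing care is already supplied by Remark \ref{rrr} together with the parities of the Laurent expansions of $\wp$ and $\wp'$, so there is no gap.
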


\begin{proof}
Recall that the local exponents of (\ref{3ode}) at $0$ are
\[-n-l, \quad -l+1,\quad n+2l+2.\]
If $l$ is odd, then $-n-l$ is odd and $-l+1$, $n+2l+2$ are both even, and the Forbenius's method implies the existence of a local solution of the form
\[y_{odd}(z)=z^{-n-l}\sum_{j=0}^{\infty}c_j z^{2j},\quad c_0=1,\]
with local exponent $-n-l$ (Note that there is no $z^{-l+1}$ and $z^{n+2l+2}$ terms in this $y_{odd}(z)$). Since Theorem \ref{even-elliptic-y1} says that (\ref{3ode}) has an even elliptic solution $y_{0}$ with local exponent $-l+1$, we conclude that all solutions of (\ref{3ode}) are meromorphic.

If $l$ is even, then $-l+1$ is odd and $-n-l$, $n+2l+2$ are both even. Again, (\ref{3ode}) has a local solution of the form
\[y_{odd}(z)=z^{-l+1}\sum_{j=0}^{\infty}c_j z^{2j},\quad c_0=1,\]
with local exponent $-l+1$ (Again there is no $z^{n+2l+2}$ term in this $y_{odd}(z)$). Since Theorem \ref{even-elliptic-y11} says that (\ref{3ode}) has an even elliptic solution $y_{0}$ with local exponent $-n-l$, we conclude that all solutions of (\ref{3ode}) are meromorphic.

Finally, it follows from \eqref{eq: data} that all solutions of the dual equation (\ref{3ode-dual}) are meromorphic.
\end{proof}

\begin{remark}\label{remark-evenelliptic}
 Theorems \ref{even-elliptic-y1}-\ref{even-elliptic-y11} and the proof of Lemma \ref{Lem-app} indicates that (\ref{3ode}) has a unique even elliptic solution $y_0(z)$ of the form
\[y_0(z)=\sum_{j=0}^{k}C_j(B)\wp(z)^j,\quad\text{where}\;\,
k=\begin{cases}\frac{l-1}{2}\quad\text{if $l$ odd},\\
\frac{n+l}{2}\quad\text{if $l$ even}.
\end{cases}\]
Here $C_{k}(B)=1$, $C_j(B)\in \mathbb{Q}[g_2,g_3][B]$ with degree $\deg C_j(B)=k-j$. Furthermore, $C_j(B)$ is homogenous of weight $k-j$, where the weights of $B, g_2, g_3$ are $1, 2, 3$ respectively.
\end{remark}

The rest of this section is to prove
that except at most $2(n+2l-k)+1$ choices of $B$'s, the generators $N_1, N_2$ of the monodromy group of (\ref{3ode}) can be diagonalized simultaneously; See Theorems \ref{Dual-Monodromy}-\ref{Dual-Monodromy1}. To this goal, we
denote
\begin{equation}\label{eq-s-4}p_1(z):=-(\alpha\wp(z)+B),\quad p_0(z):=\beta\wp'(z)\end{equation}
for convenience,
namely (\ref{3ode}) is written as
\begin{equation}\label{3ode-0}y'''+p_1y'+p_0y=0.\end{equation}

\begin{lemma}\label{lem-ode-2} Let $y_0(z)$ be the even elliptic solution of \eqref{3ode} in Remark \ref{remark-evenelliptic}.
Consider the following second order linear ODE
\begin{equation}\label{eq-sec-ode}
f''+\frac{3y_0'}{y_0}f'+\left(\frac{3y_0''}{y_0}+p_1\right)f=0.
\end{equation}
Then any solution of \eqref{eq-sec-ode} is of the form
\begin{equation}\label{eq-s-1}
f(z):=\frac{W(y,y_0)}{y_0^2}=\frac{y'y_0-yy_0'}{y_0^2},
\end{equation}
where $y(z)$ is a solution of \eqref{3ode}.
In particular, all solutions of \eqref{eq-sec-ode} are meromorphic.
\end{lemma}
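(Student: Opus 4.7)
The plan is to use the classical reduction-of-order trick. Since $y_0$ is a nonzero solution of \eqref{3ode}, substituting $y=y_0 g$ into \eqref{3ode} should eliminate the $g$-term (because $y_0$ solves the ODE), leaving a third order ODE in $g$ whose lowest order of differentiation is $g'$. Setting $f=g'$ then produces a second order ODE in $f$, which I expect will match \eqref{eq-sec-ode} exactly.

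Concretely, I would first expand $(y_0 g)'''+p_1(y_0 g)'+p_0 y_0 g$, group the terms, and use $y_0'''+p_1 y_0'+p_0 y_0=0$ to cancel the coefficient of $g$. What remains is a linear relation among $g',g'',g'''$ with coefficients involving $y_0,y_0',y_0''$ and $p_1$; after dividing by $y_0$ and substituting $f=g'$, the result is precisely \eqref{eq-sec-ode}. Under this substitution, $f=g'=(y/y_0)'=W(y,y_0)/y_0^2$, giving the claimed formula.

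Next I would invoke a dimension count to show every solution of \eqref{eq-sec-ode} arises this way. The map $\Phi:y\mapsto W(y,y_0)/y_0^2$ from the 3-dimensional solution space of \eqref{3ode} to the 2-dimensional solution space of \eqref{eq-sec-ode} is linear. Its kernel consists of $y$ with $W(y,y_0)\equiv 0$, i.e. $y=c\,y_0$ for a constant $c$, so $\ker\Phi$ is 1-dimensional. Hence $\dim\operatorname{Im}\Phi=2$, and $\Phi$ is surjective onto the solution space of \eqref{eq-sec-ode}. Finally, meromorphicity of every solution of \eqref{eq-sec-ode} follows: by Lemma \ref{Lem-app}, every solution $y$ of \eqref{3ode} is meromorphic on $\mathbb{C}$, and $y_0$ is a nontrivial elliptic (hence meromorphic) function, so $W(y,y_0)/y_0^2$ is meromorphic.

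No serious obstacle is anticipated. The only minor subtlety is that $y_0$ has zeros and poles, so the expression $f=W(y,y_0)/y_0^2$ should be read as an equality of meromorphic functions on $\mathbb{C}$; equivalently, the reduction-of-order computation is carried out as an identity of meromorphic functions, and the apparent singularities at zeros of $y_0$ are cancelled by zeros of $W(y,y_0)$ of sufficient order (guaranteed because both sides are global meromorphic functions satisfying the same ODE). The bookkeeping in the expansion of $(y_0 g)'''$ is the most tedious but entirely routine portion.
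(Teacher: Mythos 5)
Your proposal is correct and follows essentially the same route as the paper: the paper verifies directly that $f=W(y,y_0)/y_0^2$ satisfies \eqref{eq-sec-ode} by differentiating the relation $y'y_0-yy_0'=y_0^2f$ and using the ODE, which is computationally equivalent to your reduction-of-order substitution $y=y_0g$, $f=g'$. The surjectivity step is the same dimension count (the paper states it more tersely; your explicit identification of the kernel as $\mathbb{C}y_0$ is the right justification).
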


\begin{proof}
Let $y(z)$ be a solution of (\ref{3ode}) and $f(z)$ be given by  (\ref{eq-s-1}).
Then
\begin{equation}\label{fc-y1}
y'y_0-yy_0'=y_0^2f,
\end{equation}
and so
\begin{equation}\label{fc-y2}
y''y_0-yy_0''=y_0^2f'+2y_0y_0'f,
\end{equation}
\begin{equation}\label{fc-y3}
y'''y_0-yy_0'''+y''y_0'-y'y_0''=y_0^2f''+4y_0y_0'f'+2y_0y_0''f+2y_0'^2f.
\end{equation}
Since $y'''=-p_1y'-p_0y$ and $y_0'''=-p_1y_0'-p_0y_0$, we have
\begin{equation}\label{fc-y4}y'''y_0-yy_0'''=-p_1(y'y_0-y_0'y)=-p_1y_0^2f.\end{equation}
Furthermore, by (\ref{fc-y1})-(\ref{fc-y2}) we easily obtain
\begin{equation}\label{fc-y5}y''y_0'-y'y_0''=y_0y_0'f'+2y_0'^2f-y_0y_0''f.\end{equation}
Inserting (\ref{fc-y4})-(\ref{fc-y5}) into (\ref{fc-y3}) leads to
 \[y_0^2f''+3y_0y_0'f'+(3y_0y_0''+p_1y_0^2)f=0,\]
 so $f(z)$
is a solution of (\ref{eq-sec-ode}). Since the set of such $f(z)$'s in (\ref{eq-s-1}) given by all solutions $y(z)$'s of (\ref{3ode}) is a linear space of dimension $2$, we conclude that such $f(z)$'s give all solutions of (\ref{eq-sec-ode}). The proof is complete.
\end{proof}

\begin{lemma}\label{lem-ode-1} Equation \eqref{eq-sec-ode} has a solution $f_1(z)$ such that
\begin{equation}\label{eq-ss-2}
f_1(z+\omega_j)=\lambda_jf_1(z),\quad \lambda_j\neq 0,\;\, j=1,2.
\end{equation}
Consequently, $f_2(z):=f_1(-z)$ is also a solution of \eqref{eq-sec-ode} and satisfies
\[f_2(z+\omega_j)=\lambda_j^{-1}f_2(z),\quad j=1,2,\]
and so
$f_1f_2$ is an even elliptic function.
\end{lemma}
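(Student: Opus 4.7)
\medskip

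\noindent\textbf{Proof proposal.} The plan is to produce $f_1$ as a common eigenfunction for the two translation operators acting on the solution space of \eqref{eq-sec-ode}, then use the parity of the coefficients to obtain $f_2$ and conclude.

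First I would observe that the coefficients of \eqref{eq-sec-ode} are elliptic functions. Indeed, $y_0(z)$ is an even elliptic function by Remark \ref{remark-evenelliptic}, so $y_0'/y_0$ and $y_0''/y_0$ are elliptic with periods $\omega_1,\omega_2$, and $p_1(z)=-(\alpha\wp(z)+B)$ is elliptic. By Lemma \ref{lem-ode-2}, every solution of \eqref{eq-sec-ode} is meromorphic on $\mathbb{C}$, so translations $z\mapsto z+\omega_j$ act as linear automorphisms $M_j\in GL(2,\mathbb{C})$ on the two-dimensional solution space. Since $\pi_1(E_\tau)$ is abelian, $M_1M_2=M_2M_1$, so $M_1$ and $M_2$ admit a common eigenvector: pick any eigenvector $v$ of $M_1$ with eigenvalue $\lambda_1$; the corresponding eigenspace is $M_2$-invariant because $M_1$ and $M_2$ commute, and so contains an eigenvector of $M_2$ with some eigenvalue $\lambda_2$. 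The solution $f_1$ corresponding to this common eigenvector satisfies $f_1(z+\omega_j)=\lambda_j f_1(z)$; the eigenvalues $\lambda_j$ are nonzero because $M_j$ is invertible.

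Next, I would verify the parity property. Since $y_0$ is even, $y_0'$ is odd and $y_0''$ is even, so $3y_0'/y_0$ is odd while $3y_0''/y_0+p_1$ is even. A direct substitution then shows that if $f(z)$ solves \eqref{eq-sec-ode}, so does $f(-z)$: setting $g(z)=f(-z)$ gives $g'(z)=-f'(-z)$, $g''(z)=f''(-z)$, and using oddness of the $f'$-coefficient and evenness of the $f$-coefficient turns the ODE at $z$ into the ODE at $-z$. Therefore $f_2(z):=f_1(-z)$ is a solution of \eqref{eq-sec-ode}.

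Finally, the transformation law for $f_2$ follows from
\[
f_2(z+\omega_j)=f_1\bigl(-z-\omega_j\bigr)=\lambda_j^{-1}f_1(-z)=\lambda_j^{-1}f_2(z),
\]
where I used $f_1(w-\omega_j)=\lambda_j^{-1}f_1(w)$ with $w=-z$. Consequently,
\[
(f_1f_2)(z+\omega_j)=\lambda_j f_1(z)\cdot\lambda_j^{-1}f_2(z)=(f_1f_2)(z),
\]
so $f_1f_2$ is doubly periodic. It is also even since $(f_1f_2)(-z)=f_1(-z)f_2(-z)=f_2(z)f_1(z)=(f_1f_2)(z)$, and meromorphic by Lemma \ref{lem-ode-2}, hence an even elliptic function.

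The only conceptual point that needs care is the existence of a \emph{common} eigenvector of $M_1$ and $M_2$, which is automatic from commutativity; no obstruction appears here because on a 2-dimensional space commuting operators always share an eigenvector. The rest of the proof is a parity check and an easy bookkeeping of how translations act on $f_2$.
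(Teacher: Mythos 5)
Your proposal is correct and follows essentially the same route as the paper: both establish that the coefficients of \eqref{eq-sec-ode} are elliptic with the stated parities, take a common eigenvector of the two commuting translation operators on the two-dimensional solution space to get $f_1$, and then deduce the transformation law for $f_2(z)=f_1(-z)$ and the ellipticity of $f_1f_2$. The extra details you supply (invertibility of the monodromy matrices giving $\lambda_j\neq 0$, and the explicit parity substitution) are exactly the points the paper leaves implicit.
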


\begin{proof}
Note that since $y_0(z)$ is even elliptic, then $\frac{3y_0''}{y_0}+p_1$ is even elliptic and $\frac{3y_0'}{y_0}$ is odd elliptic. In particular, each of $f(-z)$, $f(z+\omega_j)$ is a solution of (\ref{eq-sec-ode}) if $f(z)$ is.

Let $\tilde{N}_j\in GL(2,\mathbb{C})$ (not $SL(2,\mathbb{C})$ here) denote the monodromy matrix of (\ref{eq-sec-ode}) with respect to any basis $(\tilde{f}_1, \tilde{f_2})$ of solutions under the transformation $z\to z+\omega_j$, i.e.
\[(\tilde{f}_1, \tilde{f_2})(z+\omega_j)=(\tilde{f}_1, \tilde{f_2})(z)\tilde{N}_j,\quad j=1,2.\]
Then $\tilde{N}_1\tilde{N}_2=\tilde{N}_2\tilde{N_1}$, so $\tilde{N}_1$ and $\tilde{N}_2$ have a common eigenvector $\bigl(\begin{smallmatrix}a \\
b\end{smallmatrix}\bigr)\neq \bigl(\begin{smallmatrix}0 \\
0\end{smallmatrix}\bigr)$ such that $\tilde{N}_j\bigl(\begin{smallmatrix}a \\
b\end{smallmatrix}\bigr)=\lambda_j \bigl(\begin{smallmatrix}a \\
b\end{smallmatrix}\bigr)$ for $j=1,2$. Let $f_1(z)=a\tilde{f}_1(z)+b\tilde{f}_2(z)$,
then (\ref{eq-ss-2}) holds, i.e. $f_1(z)$ is a common eigenfunction of $\tilde{N}_1$ and $\tilde{N}_2$.
\end{proof}

By Lemmas \ref{lem-ode-2}-\ref{lem-ode-1}, there is a solution $\tilde{y}_1(z)$ of (\ref{3ode}) such that $f_1=\frac{W(\tilde{y}_1, y_0)}{y_0^2}$, i.e.
\begin{equation}\label{eq-s-10}
Y_1(z):=W(\tilde{y}_1, y_0)=y_0(z)^2f_1(z),
\end{equation}
and so
\begin{equation}\label{eq-s-11}
Y_2(z):=Y_1(-z)=W(\tilde{y}_2,y_0)=y_0(z)^2f_1(-z)=y_0(z)^2f_2(z),
\end{equation}
where $\tilde{y}_2(z):=-\tilde{y}_1(-z)$. Noting from Section \ref{sec-dual}.1 that $Y_1, Y_2$ are solutions of the dual equation (\ref{3ode-dual}), we immediately obtain

\begin{corollary}\label{coro-dual}
$Y_1(z), Y_2(z)$ given in \eqref{eq-s-10}-\eqref{eq-s-11} are common eigenfunctions of the monodromy matrices of the dual equation \eqref{3ode-dual}, namely
 \begin{equation}\label{eq-ss-02}
Y_1(z+\omega_j)=\lambda_jY_1(z),\quad \;\, j=1,2,
\end{equation}
\begin{equation}\label{eq-ss-03}
Y_2(z+\omega_j)=\lambda_j^{-1}Y_2(z),\quad \;\, j=1,2.
\end{equation}
In particular, the local exponents of $Y_1(z), Y_2(z)$ at $z=0$ must both be $-n-2l$.
\end{corollary}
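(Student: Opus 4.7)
My plan is first to derive the transformation laws \eqref{eq-ss-02}--\eqref{eq-ss-03} directly from the definitions of $Y_1, Y_2$, and then to pin down the local exponent at $z=0$ via a zero/pole counting argument on the fundamental parallelogram.

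For the quasi-periodicity, since $y_0(z)$ is elliptic with periods $\omega_1,\omega_2$, we have $y_0(z+\omega_j)^2=y_0(z)^2$; combining this with $f_1(z+\omega_j)=\lambda_j f_1(z)$ and $f_2(z+\omega_j)=\lambda_j^{-1}f_2(z)$ from Lemma \ref{lem-ode-1}, the definitions \eqref{eq-s-10}--\eqref{eq-s-11} immediately yield \eqref{eq-ss-02}--\eqref{eq-ss-03}. The fact that $Y_1, Y_2$ are solutions of \eqref{3ode-dual} is guaranteed by \eqref{eq: data}, so these are genuine common eigenfunctions of the monodromy matrices of the dual equation.

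For the local exponent at $z=0$, I would observe that the only singular point of \eqref{3ode-dual} on $E_\tau$ is $0$, so $Y_1$ is holomorphic on $E_\tau\setminus\{0\}$; in particular it has no poles away from $0$, though it may well have zeros there (e.g.\ at zeros of $y_0$). Because $Y_1$ is quasi-periodic with nonzero multipliers $\lambda_1,\lambda_2$, the logarithmic derivative $Y_1'/Y_1$ is genuinely doubly periodic. Hence the argument principle applied to a fundamental parallelogram whose boundary avoids all zeros and poles of $Y_1$ gives
\[
\sum_{p}\ord_p(Y_1)=0,
\]
where $p$ ranges over one fundamental domain. Denoting the local exponent of $Y_1$ at $0$ by $\tilde{a}$, we have $\ord_0(Y_1)=\tilde{a}$, while every remaining contribution is nonnegative. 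Hence $\tilde{a}\le 0$. By \eqref{eq-s-7}, $\tilde{a}\in\{-n-2l,\,l+1,\,n+l+2\}$, and since $n+l\ge 1$ the only nonpositive element of this set is $-n-2l$. Therefore $\tilde{a}=-n-2l$. The same argument applies to $Y_2$ (or one uses $Y_2(z)=Y_1(-z)$). Non-triviality $Y_1,Y_2\not\equiv 0$ follows from the non-triviality of $f_1$ already established in Lemma \ref{lem-ode-1}.

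The only non-routine step is the counting identity, whose validity rests on the crucial observation that, unlike $Y_1$ itself, its logarithmic derivative is genuinely elliptic, so the usual cancellation of opposite sides of the contour in the argument-principle integral goes through. Everything else is bookkeeping with the definitions and the quasi-periodicity supplied by Lemma \ref{lem-ode-1}.
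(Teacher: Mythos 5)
Your proof is correct and follows essentially the same route as the paper: the quasi-periodicity is read off from the ellipticity of $y_0$ together with Lemma \ref{lem-ode-1} exactly as in the text, and your argument-principle computation for $Y_1'/Y_1$ is precisely the standard proof of the classical fact the paper invokes (that a function elliptic of the second kind has equally many zeros and poles per fundamental cell, hence cannot be pole-free here). If anything your version is slightly cleaner, since deducing $\ord_0(Y_1)\le 0$ and comparing with the exponent list $\{-n-2l,\,l+1,\,n+l+2\}$ from \eqref{eq-s-7} sidesteps having to exclude the zero-free exponential case separately.
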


\begin{proof}
Since $y_0(z)$ is even elliptic, it follows from Lemma \ref{lem-ode-1} that (\ref{eq-ss-02})-(\ref{eq-ss-03}) hold, which means that $Y_j(z)$ is \emph{elliptic of the second kind}\footnote{A meromorphic function $f(z)$ is called {\it elliptic of the second kind} if $f(z+\omega_j)=\lambda_jf(z)$ for some $\lambda_j\in\mathbb{C}\setminus\{0\}$, $j=1,2$. It is classical that non-constant functions of elliptic of the second kind must
have poles.}. Thus $Y_j(z)$ must have a pole at $z=0$ as a solution of the dual equation (\ref{3ode-dual}), and then it follows from (\ref{eq-s-7}) that the local exponent of $Y_j$ at $z=0$ is $-n-2l$.
\end{proof}

In view of Lemma \ref{lem-ode-1} and Corollary \ref{coro-dual}, a basic question is whether $f_1$ and $f_2$ (or equivalently $Y_1$ and $Y_2$) are linearly independent or not. To settle this question, we consider a transformation
\[\mathfrak{g}(z):=y_0^{\frac{3}{2}}f(z).\]
Then $f(z)$ solves (\ref{eq-sec-ode}) if and only if $g(z)$ solves
\begin{equation}\label{eq-s-2}
\mathfrak{g}''(z)=I(z)\mathfrak{g}(z),\quad \text{where}\;\, I(z):=-\frac{3y_0''}{2y_0}+\frac{3}{4}\left(\frac{y_0'}{y_0}\right)^2-p_1.
\end{equation}
Note that $I(z)$ is \emph{even elliptic} but any solution $\mathfrak{g}(z)$ of (\ref{eq-s-2}) has \emph{branch points} at simple zeros of $y_0(z)$.

Recalling $f_1, f_2$ in Lemma \ref{lem-ode-1}, we set
\begin{equation}\label{eq-s-20}\mathfrak{g}_1(z):=y_0^{\frac{3}{2}}f_1(z)=y_0^{-\frac{1}{2}}Y_1(z),\quad \mathfrak{g}_2(z):=y_0^{\frac{3}{2}}f_2(z)=y_0^{-\frac{1}{2}}Y_2(z),\end{equation}
then
\begin{equation}\label{eq-ss-101}
\Phi_{e}(z):=\mathfrak{g}_1(z)\mathfrak{g}_2(z)=y_0^3f_1(z)f_2(z)=y_0^{-1}Y_1Y_2
\end{equation}
is an even elliptic function. Furthermore, since $\mathfrak{g}_1(z)$ and $\mathfrak{g}_2(z)$ are solutions of (\ref{eq-s-2}), a direct computation shows that $\Phi_{e}(z)$ is a solution of the second symmetric product equation of (\ref{eq-s-2}):
\begin{equation}\label{eq-s-3}
\Phi'''-4I\Phi'-2I'\Phi=0.
\end{equation}
The following result shows that up to multiplying a constant, $\Phi_{e}(z)$ is the unique even elliptic solution of (\ref{eq-s-3}).

\begin{lemma}\label{lem-ode-3}
The dimension of even elliptic solutions of \eqref{eq-s-3} is one.
\end{lemma}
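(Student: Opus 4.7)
The plan is to exploit that \eqref{eq-s-3} is the second symmetric product of the second order equation \eqref{eq-s-2}. By the standard theory of symmetric products, its three-dimensional solution space is spanned by the pairwise products $\mathfrak{g}_i\mathfrak{g}_j$ drawn from any basis $\{\mathfrak{g}_1,\mathfrak{g}_2\}$ of \eqref{eq-s-2}. I would take the Floquet-type basis already constructed in Lemma \ref{lem-ode-1} (equivalently, \eqref{eq-s-20}), so that the three candidate solutions are
\[
\Phi_1:=\frac{Y_1^2}{y_0},\qquad \Phi_e:=\frac{Y_1Y_2}{y_0},\qquad \Phi_2:=\frac{Y_2^2}{y_0},
\]
all of which are meromorphic on $\mathbb{C}$ because $Y_1,Y_2$ and $y_0$ are.

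In the generic case where $Y_1$ and $Y_2$ are linearly independent, $(\Phi_1,\Phi_e,\Phi_2)$ forms a basis of the solution space of \eqref{eq-s-3}, and I would record the actions of the two lattice translations and of $z\mapsto-z$ in this basis. From Corollary \ref{coro-dual}, each translation $z\mapsto z+\omega_j$ sends $\Phi_1\mapsto\lambda_j^{2}\Phi_1$, fixes $\Phi_e$, and sends $\Phi_2\mapsto\lambda_j^{-2}\Phi_2$; and because $y_0$ is even with $Y_2(z)=Y_1(-z)$, the reflection $z\mapsto -z$ swaps $\Phi_1\leftrightarrow\Phi_2$ while fixing $\Phi_e$. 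Imposing that a combination $a\Phi_1+b\Phi_e+c\Phi_2$ be both even and elliptic then forces $a=c$ together with $a(\lambda_j^2-1)=0$ for $j=1,2$. As soon as at least one of $\lambda_1^2,\lambda_2^2$ differs from $1$, these conditions force $a=c=0$, so the even elliptic solutions form the single line $\mathbb{C}\cdot\Phi_e$.

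The main obstacle I foresee is the degenerate situation $\lambda_1^2=\lambda_2^2=1$, in which the basis above may collapse. In that case both $Y_1$ and $Y_2$ are elliptic of the second kind with the same (trivial) squared multipliers and the same leading behaviour $z^{-n-2l}$ at $0$ (note $n+2l$ is even since $n$ is even, so the leading coefficient survives the sign change $z\mapsto -z$). Since the local solution space of the dual equation with exponent $-n-2l$ at $0$ is one-dimensional, $Y_2$ must then be a scalar multiple of $Y_1$, and the product basis degenerates to a single line. I would handle this case by replacing the missing second solution of \eqref{eq-s-2} by the one obtained via reduction of order, $\mathfrak{g}^{*}(z)=\mathfrak{g}_1(z)\int d\zeta/\mathfrak{g}_1(\zeta)^{2}$, and showing that the new basis elements $\mathfrak{g}_1\mathfrak{g}^{*}$ and $(\mathfrak{g}^{*})^{2}$ pick up polynomial-in-$\mathfrak{g}_1$ corrections under each translation, governed by the period integrals $\oint_j d\zeta/\mathfrak{g}_1^{2}$; nonvanishing of at least one of these periods rules out any additional even elliptic solution. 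The really delicate point, which I expect to be the hard part of the argument, is to exclude the subcase in which both of these period integrals vanish; I would handle it by comparing with the uniqueness of the even elliptic solution $y_0$ of \eqref{3ode} provided by Remark \ref{remark-evenelliptic}, which forbids a second independent even elliptic solution of the symmetric product from existing.
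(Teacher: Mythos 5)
Your generic-case argument is fine and is a genuinely different (and in that regime cleaner) route than the paper's: when some $\lambda_j^2\neq 1$, the Floquet basis $\Phi_1,\Phi_e,\Phi_2$ together with the translation and reflection actions immediately identifies the even elliptic solutions of \eqref{eq-s-3} as $\mathbb{C}\cdot\Phi_e$. The difficulty, however, is entirely concentrated in the degenerate situation $\lambda_1^2=\lambda_2^2=1$, and there your proposal has two gaps. The smaller one: your reason for $Y_2$ being proportional to $Y_1$ is not valid as stated. Solutions of the third-order dual equation \eqref{3ode-dual} with leading exponent $-n-2l$ do not form a one-dimensional space; two of them may differ by a solution with exponent $l+1$ or $n+l+2$. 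The conclusion is nevertheless true, but what closes it is a Liouville argument: the normalized difference is elliptic of the second kind with multipliers $\pm1$ and has \emph{positive} local exponent at the unique singularity $0$ of \eqref{3ode-dual}, hence is bounded, holomorphic, vanishes at $0$, and so is identically zero. This is exactly the mechanism you are missing elsewhere.

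The serious gap is your last subcase, where both period integrals $\oint_j d\zeta/\mathfrak{g}_1^{2}$ vanish. There all three of $\mathfrak{g}_1^2,\ \mathfrak{g}_1\mathfrak{g}^{*},\ (\mathfrak{g}^{*})^2$ become elliptic, and appealing to the uniqueness of the even elliptic solution $y_0$ of \eqref{3ode} (Remark \ref{remark-evenelliptic}) does not rule this out: an even elliptic solution of \eqref{eq-s-3} is of the form $y_0^{-1}$ times a \emph{product} of two solutions of the dual equation, so it is neither a solution of \eqref{3ode} nor of \eqref{3ode-dual}, and no uniqueness statement for those equations applies to it. (Note also that solutions of \eqref{eq-s-3} may have poles at the zeros of $y_0$, so one cannot simply say an elliptic solution with positive exponent at $0$ has no poles.) The missing ingredient is the one the paper's proof is built on: compute the local exponents of \eqref{eq-s-2} at $0$ to be $\rho_1<0<\rho_2$ with $\rho_1+\rho_2=1$, observe that $Y^{*}:=y_0^{1/2}\mathfrak{g}^{*}$ is a solution of the dual equation with positive exponent $\rho_2-k$ at $0$ and, in your subcase, is elliptic of the second kind with multipliers $\pm1$; Liouville then forces $Y^{*}\equiv 0$, a contradiction. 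The paper runs this same argument uniformly in $B$ (using the two local Frobenius solutions of \eqref{eq-s-2} rather than the Floquet pair, and extracting the holomorphic dual solution $W(\hat y_1,y_0)$), which is why it needs no case split on $\lambda_j$; your route can be completed, but only by importing this positive-exponent Liouville step for the degenerate $B$'s.
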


\begin{proof} Recall Remark \ref{remark-evenelliptic} that $y_0(z)=z^{-2k}(1+O(|z|^2))$ and $p_1(z)=-\alpha\wp(z)-B=-\alpha z^{-2}(1+O(|z|^2))$ at $z=0$. Inserting these  and (\ref{alpha-nl}) into (\ref{eq-s-2}), a direct computation shows that the local exponent $\rho$ of (\ref{eq-s-2}) at $z=0$ satisfies
\begin{align*}\rho(\rho-1)&-[(n+l)^2+(l+2)(n+l)+l(l+1)]\\
&+3k(2k+1)-3k^2=0,\end{align*}
so the local exponents of (\ref{eq-s-2}) at $z=0$ are
\[\rho_1=-n-l-\tfrac{l+1}{2},\;\; \rho_2=n+l+\tfrac{l+1}{2}+1,\;\;\text{if $l$ is odd, i.e. $k=\tfrac{l-1}{2}$},\]
\[\rho_1=-\tfrac{n+l}{2}-l,\;\; \rho_2=\tfrac{n+l}{2}+l+1,\;\;\text{if $l$ is even, i.e. $k=\tfrac{n+l}{2}$}.\]
Consequently, the local exponents of (\ref{eq-s-3}) at $z=0$ are $2\rho_1, 1, 2\rho_2$, so
 the dimension of the space of even solutions to %
(\ref{eq-s-3}) is $2$.

Suppose by contradiction that the dimension of even
elliptic solutions to (\ref{eq-s-3}) is $2$, then any even solution
of (\ref{eq-s-3}) must be elliptic.
On the other hand, (\ref{eq-s-2}) has local
solutions of the following form at $0$:
\begin{equation}\label{eq-s-5}
\hat{\mathfrak{g}}_{1}(z)=z^{\rho_1}\bigg(1+\sum_{j=1}^{\infty}a_{j}z^{2j}\bigg)  ,%
\text{ \ }\hat{\mathfrak{g}}_{2}(z) =z^{\rho_2}\bigg(1+\sum_{j=1}^{\infty}b_{j}z^{2j}%
\bigg)  .
\end{equation}
Then $\hat{\mathfrak{g}}_j(z)^2$, $j=1,2$, are even solutions of (\ref{eq-s-3}) and
hence even elliptic functions by our assumption. Define
\begin{equation}  \label{2-15}
\hat{f}_j(z):=\hat{\mathfrak{g}}_j(z)y_0(z)^{-\frac{3}{2}},\quad j=1,2,
\end{equation}
then $\hat{f}_j(z)$ are linearly independent solutions of (\ref{eq-sec-ode}) and so meromorphic, and $\hat{f}_j(z)^2=\hat{\mathfrak{g}}_j^2y_0^{-3}$ are even elliptic functions.
On the other hand, there are $\SM{a_i}{b_i}{c_i}{d_i}\in GL(2,\mathbb{C})$ such that
\begin{equation*}
\begin{pmatrix}
\hat{f}_{1}( z+\omega_{i}) \\
\hat{f}_{2}( z+\omega_{i})%
\end{pmatrix}
=
\begin{pmatrix}
a_i & b_i \\
c_i & d_i%
\end{pmatrix}
\begin{pmatrix}
\hat{f}_{1}( z) \\
\hat{f}_{2}( z)%
\end{pmatrix}%
,\quad i=1,2.
\end{equation*}
Then%
\begin{align*}
\hat{f}_{1}(z) ^{2} =\hat{f}_{1}( z+\omega _{i})^{2}=a_i^{2}\hat{f}
_{1}( z) ^{2}+2a_ib_i\hat{f}_{1}(z) \hat{f}_{2}(z) +b_i^{2}\hat{f}
_{2}(z)^{2}
\end{align*}
yields $b_i=0$ and $a_i=\pm 1$. Similarly, we could use $
\hat{f}_2$ to obtain $c_i=0$ and $d_i=\pm 1$, i.e.
\begin{equation}\label{eq-3ord-1}\hat{f}_{1}( z+\omega _{i})=a_i \hat{f}_{1}(z),\quad \hat{f}_{2}( z+\omega _{i})=d_i\hat{f}_{2}( z),\quad i=1,2.\end{equation}
Note that the Wronskian
\[W(\hat{f}_1,\hat{f}_2)=\hat{f}_1'\hat{f}_2-\hat{f}_2'\hat{f}_1
=\frac{W(\hat{\mathfrak{g}}_1,\hat{\mathfrak{g}}_2)}{y_0^3}\]
is an elliptic function because $W(\hat{\mathfrak{g}}_1,\hat{\mathfrak{g}}_2)=\hat{\mathfrak{g}}_1'\hat{\mathfrak{g}}_2-\hat{\mathfrak{g}}_2'\hat{\mathfrak{g}}_1$ is a nonzero constant by applying that $\hat{\mathfrak{g}}_1, \hat{\mathfrak{g}}_2$ are linearly independent solutions of (\ref{eq-s-2}). This, together with $W(\hat{f}_1,\hat{f}_2)(z+\omega_i)=a_id_iW(\hat{f}_1,\hat{f}_2)(z)$, implies $a_id_i=1$ and so $(\hat{f}_{1}\hat{f}_{2})(z+\omega_i)=(\hat{f}_{1}\hat{f}_{2})(z)$, i.e.
\[\hat{f}_{1}(z)\hat{f}_{2}( z)=\frac{\hat{\mathfrak{g}}_1(z)\hat{\mathfrak{g}}_2(z)}{y_0(z)^3}\quad\text{\it is odd and elliptic},\]
where the fact $\hat{\mathfrak{g}}_1(z)\hat{\mathfrak{g}}_2(z)$ being odd is used.
Since Lemma \ref{lem-ode-2} says that there are solutions $\hat{y}_j(z)$ of (\ref{3ode}) such that
\[\hat{f}_j(z)=\frac{W(\hat{y}_j,y_0)}{y_0^{2}},\] it follows that
\[
W(\hat{y}_1,y_0)W(\hat{y}_2,y_0)=y_0\hat{\mathfrak{g}}_1(z)\hat{\mathfrak{g}}_2(z)\quad\text{\it is odd and elliptic}.
\]
Note from (\ref{eq-s-5}) that the local exponent of $W(\hat{y}_1,y_0)W(\hat{y}_2,y_0)$ at $z=0$ is $1-2k$. Recalling that $W(\hat{y}_1,y_0)$, $W(\hat{y}_2,y_0)$ are both solutions of the dual equation (\ref{3ode-dual}), we conclude from (\ref{eq-s-7}) that one of $W(\hat{y}_1,y_0)$ and $W(\hat{y}_2,y_0)$, say $W(\hat{y}_1,y_0)$ for example, has the local exponent $n+l+2$ at $z=0$ if $2k=l-1$ (resp. $l+1$ if $2k=n+l$), and $W(\hat{y}_2,y_0)$ has the local exponent $-n-2l$ at $z=0$. In conclusion, $W(\hat{y}_1, y_0)=y_0^{2}\hat{f}_1(z)$ is holomorphic and has a zero of finite order (i.e. $n+l+2$ if $2k=l-1$, or $l+1$ if $2k=n+l$) at $z=0$, and satisfies (recall (\ref{eq-3ord-1}))
\[W(\hat{y}_1, y_0)(z+\omega_i)=a_iW(\hat{y}_1, y_0),\quad a_i=\pm 1, \;\,i=1,2,\]
so the Liouville theorem implies $W(\hat{y}_1, y_0)\equiv 0$, i.e. $\hat{f}_1(z)\equiv 0$, clearly a contradiction. This proves that the dimension of even elliptic solutions of (\ref{eq-s-3}) is one. The proof is complete.
\end{proof}

By (\ref{eq-ss-101}) and Corollary \ref{coro-dual},
\begin{equation}\label{eq-s-12}
F(z):=Y_1(z)Y_2(z)=y_0(z)\Phi_{e}(z)
\end{equation}
is even elliptic and has only a pole at $z=0$ with order $2(n+2l)$, so
up to multiplying a constant,
\begin{equation}\label{eq-s-13}
F(z)=\sum_{j=0}^{m}\beta_j \wp(z)^j,\quad \text{where }\;m:=n+2l,\;\,\beta_m=1.
\end{equation}

\begin{lemma}
For any $0\leq j\leq m$, $\beta_j=\beta_j(B)\in \mathbb{Q}[g_2, g_3][B]$ is homogenous of weight $m-j$, where the weights of $B, g_2, g_3$ are $1, 2, 3$ respectively, i.e. $\deg_B \beta_j\leq m-j$. Furthermore, $\deg_B\beta_0=m$, so by multiplying a rational number $r\neq 0$ to both $Y_1(z)$ and $Y_2(z)$, we may always assume that $\beta_0=B^m+\cdots$ is monic and then $\beta_m=r^2\in\mathbb{Q}\setminus\{0\}$.
\end{lemma}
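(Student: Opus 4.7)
The plan is to extract a triangular recursion for $\beta_j$ from a single polynomial identity satisfied by $F$. The starting point is the first integral of the symmetric-square ODE \eqref{eq-s-3} for $\Phi_e$: differentiating $\mathcal{Q}(\Phi_e) := 2\Phi_e\Phi_e'' - (\Phi_e')^2 - 4I\Phi_e^2$ and invoking \eqref{eq-s-3} gives $\mathcal{Q}(\Phi_e)' = 2\Phi_e[\Phi_e''' - 4I\Phi_e' - 2I'\Phi_e] = 0$, so $\mathcal{Q}(\Phi_e)$ is a constant. Using $\mathfrak{g}_j = y_0^{-1/2}Y_j$ from \eqref{eq-s-20} together with $\mathfrak{g}_j'' = I\mathfrak{g}_j$, a short computation identifies this constant as $-W_0^2$, where $W_0 := \mathfrak{g}_1'\mathfrak{g}_2 - \mathfrak{g}_1\mathfrak{g}_2'$ is the constant Wronskian; moreover $W(Y_1,Y_2) = W_0 y_0$, in accordance with \eqref{eq: data}, the transformation laws \eqref{eq-ss-02}--\eqref{eq-ss-03}, and the one-dimensionality of elliptic solutions to \eqref{3ode}.

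Substituting $\Phi_e = F/y_0$ into $\mathcal{Q}(\Phi_e) = -W_0^2$, multiplying by $y_0^4$, and simplifying via $4y_0^2 I = -6y_0 y_0'' + 3(y_0')^2 - 4p_1 y_0^2$ yields the key polynomial identity
\[
y_0^2\bigl[2 F F'' - (F')^2\bigr] - 2 F F' y_0 y_0' + 4F^2 y_0[y_0'' + p_1 y_0] + W_0^2 y_0^4 = 0.
\]
After reducing via $(\wp')^2 = 4\wp^3 - g_2\wp - g_3$, this becomes a polynomial identity in $\wp$ of degree $2(k+m)+1$ with coefficients in $\mathbb{Q}[g_2,g_3][B]$. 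Substituting $F = \sum_{j=0}^m \beta_j \wp^j$ (with $\beta_m = 1$), the plan is to compare coefficients of $\wp^{2(k+m)+1-N}$ for $N = 0, 1, 2, \ldots$. At $N = 0$, the leading-term calculation (using $(\wp^j)'' = 2j(2j+1)\wp^{j+1} + \cdots$ and Remark \ref{remark-evenelliptic}) reduces to the scalar identity $m(m+1) - 2mk + 2k(2k+1) - \alpha = 0$, which can be verified directly from \eqref{alpha-nl} in both cases $2k = l-1$ and $2k = n+l$; this is the required consistency check. For $N \geq 1$, the equation takes the form $c_N \beta_{m-N} = R_N(\beta_m, \ldots, \beta_{m-N+1};\, B, g_2, g_3)$, with $c_N \in \mathbb{Q}\setminus\{0\}$ a universal pivot and $R_N$ a polynomial of weight $N$. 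By induction on $N$, each $\beta_{m-N} \in \mathbb{Q}[g_2,g_3][B]$ is homogeneous of weight $N$, establishing the first claim.

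For the degree assertion I track the leading $B$-coefficient through the recursion. The $-B$ summand of $p_1$ produces a new highest-$B$-degree contribution to $R_N$ at every step, so the leading $B$-coefficient of $\beta_{m-N}$ has exact degree $N$ with nonzero rational value. Hence $\deg_B\beta_0 = m$, and $\beta_0 = cB^m + (\text{lower-}B\text{-degree terms})$ with $c \in \mathbb{Q}\setminus\{0\}$. Replacing $(Y_1, Y_2)$ by $(rY_1, rY_2)$ for $r$ with $r^2 = 1/c$ multiplies $F$, and hence every $\beta_j$, by $r^2 = 1/c$; this renormalizes $\beta_0$ to be monic and yields $\beta_m = r^2 = 1/c \in \mathbb{Q}\setminus\{0\}$, as asserted.

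The main obstacle I anticipate is verifying that the pivot $c_N$ is nonzero for every $N \geq 1$. If $c_N = 0$ for some $N$, the corresponding equation becomes a compatibility relation (automatically valid because $F$ is known to exist by Lemmas \ref{lem-ode-2}--\ref{lem-ode-3}), and $\beta_{m-N}$ has to be determined from a subsequent equation, complicating the inductive bookkeeping without altering the structural conclusion. An explicit computation of the $\beta_m\beta_{m-N}$ cross terms in the identity above, or alternatively a specialization argument (for instance to $g_2 = g_3 = 0$ followed by analyticity in $\tau$), should suffice to rule out any genuine degeneration and complete the proof.
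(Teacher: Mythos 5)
Your reduction to a triangular recursion is correct but runs through a genuinely different identity than the paper's. You expand the quadratic first integral $2\Phi_e\Phi_e''-(\Phi_e')^2-4I\Phi_e^2=-W_0^2$ after clearing $y_0^{4}$, obtaining a polynomial identity \emph{quadratic} in $F$, whereas the paper derives the \emph{linear} third-order ODE \eqref{eq-s-103} satisfied by $F$ and reads off a recursion linear in the $\beta_j$, with pivots $\vartheta_j$ given in closed form in \eqref{eq-s-15}. Your route does work: the new unknown $\beta_{m-N}$ enters the coefficient of $\wp^{2(k+m)+1-N}$ only through its cross term with $\beta_m=1$; the term $W_0^2y_0^4$ has degree $4k\le 2k+m$ and so cannot interfere for $N\le m$; your $N=0$ consistency check $m(m+1)-2mk+2k(2k+1)=\alpha$ is valid in both cases $2k=l-1$ and $2k=n+l$; and the pivot you flagged as the main obstacle can be computed explicitly. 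Writing $j=m-N$, the cross-term coefficient is
\[
c_N=4j(2j+1)+4m(2m+1)-8mj-8k(m+j)+16k(2k+1)-8\alpha=-4N\bigl(2(m-k)+1-2N\bigr),
\]
which is nonzero for every $N\ge1$ because $2(m-k)+1$ is odd. So the weight-homogeneity statement is fully recoverable from your identity; the quadratic recursion costs somewhat more bookkeeping than the paper's linear one, but nothing essential is lost.

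The genuine gap is the degree assertion $\deg_B\beta_0=m$. The sentence ``the $-B$ summand of $p_1$ produces a new highest-$B$-degree contribution to $R_N$ at every step'' is a claim, not an argument. Since the coefficient of $\wp^{2(k+m)+1-N}$ is homogeneous of weight $N$, its $B^N$-part receives contributions not only from $-8B\beta_m\beta_{m-N+1}$ but also from quadratic terms $\beta_a\beta_b$ with $a,b>m-N$ and from the $B$-dependence of the coefficients $C_i(B)$ of $y_0$ (which have $\deg_B C_i=k-i$); you have not excluded cancellation of the leading $B$-coefficient at some step. Moreover your induction hypothesis is the exact-degree statement $\deg_B\beta_{m-N}=N$ for \emph{every} $N$, which is stronger than the lemma's conclusion: if it failed at one intermediate step the induction would collapse for all subsequent steps, even though $\deg_B\beta_0=m$ could still be true. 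The paper sidesteps this entirely in its Step 3: it proves only $\deg_B\beta_0=m$, by contradiction, substituting $x=\wp(z)=B^{1-\varepsilon}$ and showing that if $\deg_B\beta_0<m$ then the single term $4p_1y_0^2F'/\wp'$ in \eqref{eq-s-103} strictly dominates every other term as $|B|\to\infty$, contradicting the identity --- and this argument uses only the upper bounds $\deg_B\beta_j\le m-j$ already established. You need either to import such an asymptotic argument (adapted to your quadratic identity), or to turn your proposed $g_2=g_3=0$ specialization into an actual proof that the specialized leading coefficients never vanish; as written, this half of the lemma is not proved.
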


\begin{proof} We divide our proof into several steps.

{\bf Step 1.} We show that $F(z)$ solves the following equation
\begin{align}\label{eq-s-103}
y_0^2F'''&-3y_0y_0'F''+[3(y_0')^2+3y_0y_0''+4p_1y_0^2]F'\\
&+[-6y_0'y_0''-6p_1y_0y_0'+2(p_1'-p_0)y_0^2]F=0.\nonumber
\end{align}

Indeed, $\Phi_e=\frac{F}{y_0}$ gives
\[\Phi_e'=\frac{F'}{y_0}-\frac{y_0'}{y_0^2}F,\]
\begin{align*}
\Phi_e'''=\frac{F'''}{y_0}-\frac{3y_0'}{y_0^2}F''+\frac{6(y_0')^2-3y_0y_0''}{y_0^3}F'
+\frac{-6(y_0')^3+6y_0y_0'y_0''-y_0^2y_0'''}{y_0^4}F.
\end{align*}
Inserting these and
\[I=-\frac{3y_0''}{2y_0}+\frac{3}{4}\left(\frac{y_0'}{y_0}\right)^2-p_1,\]
\[I'=-\frac{3y_0'''}{2y_0}+3\frac{y_0'y_0''}{y_0^2}-\frac{3(y_0')^3}{2y_0^3}-p_1',\]
into (\ref{eq-s-3}) and using $y_0'''=-p_1y_0'-p_0y_0$, we easily obtain (\ref{eq-s-103}).

{\bf Step 2.} We show that $\beta_j=\beta_j(B)\in \mathbb{Q}[g_2, g_3][B]$ is homogenous of weight $m-j$. In particular, $\deg_B\beta_j\leq m-j$.

Write $x=\wp(z)$ for convenience, then $x':=\frac{dx}{dz}=\wp'(z)$. Recall Remark \ref{remark-evenelliptic} that
\[y_0(z)=\sum_{j=0}^{k}C_jx^j,\quad\text{where}\;\,k=\begin{cases}\frac{l-1}{2}\quad\text{if $l$ odd},\\
\frac{n+l}{2}\quad\text{if $l$ even}.
\end{cases}\]
Here $C_{k}=1$, $C_j=C_j(B)\in \mathbb{Q}[g_2,g_3][B]$ is homogenous of weight $k-j$ with degree $\deg_B C_j=k-j$.
Also recall (\ref{eq-s-13}) that
\[F=\sum_{j=0}^{m}\beta_j x^j,\quad \beta_m=1.\]
By
\[(\wp')^2=4\wp^3-g_2\wp-g_3=4x^3-g_2x-g_3,\]
\[\wp''=6\wp^2-\tfrac{g_2}{2}=6x^2-\tfrac{g_2}{2},\quad \wp'''=12\wp'\wp=12\wp'x,\]
we have
\[\frac{y_0'}{\wp'}=\sum_{j=0}^{k}jC_jx^{j-1},\quad \frac{F'}{\wp'}=\sum_{j=0}^{m}j\beta_jx^{j-1},\]
\begin{align*}y_0''=
(4x^3-g_2x-g_3)\sum_{j=0}^{k}j(j-1)C_jx^{j-2}+(6x^2-\tfrac{g_2}{2})\sum_{j=0}^{k}jC_jx^{j-1},
\end{align*}
\begin{align*}F''&=
(4x^3-g_2x-g_3)\sum_{j=0}^{m}j(j-1)\beta_jx^{j-2}+(6x^2-\tfrac{g_2}{2})
\sum_{j=0}^{m}j\beta_jx^{j-1}\\
&=\sum_{j=0}^{m}[(4j^2+2j)\beta_j-(j+2)(j+1)g_2\beta_{j+2}]x^{j+1}\\
&\quad-
\sum_{j=-1}^{m}[\tfrac{j+2}{2}g_2\beta_{j+2}+(j+3)(j+2)g_3\beta_{j+3}]x^{j+1},
\end{align*}
{\allowdisplaybreaks
\begin{align*}
\frac{F'''}{\wp'}=&(4x^3-g_2x-g_3)\sum_{j=0}^{m}j(j-1)(j-2)\beta_jx^{j-3}\\
&+3(6x^2-\tfrac{g_2}{2})\sum_{j=0}^{m}j(j-1)\beta_jx^{j-2}+12\sum_{j=0}^{m}j\beta_jx^{j}\\
=&\sum_{j=0}^m\Big\{[4j(j-1)(j-2)+18j(j-1)+12j]\beta_j\\&-(j+2)(j+1)(j+\tfrac{3}{2})
g_2\beta_{j+2}
-(j+3)(j+2)(j+1)g_3\beta_{j+3}\Big\}x^{j},
\end{align*}}
where $\beta_{m+3}=\beta_{m+2}=\beta_{m+1}:=0$.

Inserting these and
\[p_1=-\alpha x-B,\quad p_0=\beta\wp'\]
 into (\ref{eq-s-103}) and eliminating the common factor $\wp'$, we obtain
 \[\sum_{l=0}^{2k+m}\xi_l x^l:=\frac{\text{LHS of (\ref{eq-s-103})}}{\wp'}=0,\]
where $\xi_l\in \mathbb{Q}[g_2,g_3,C_j,\beta_j,B]$. Thus $\xi_l=0$ for all $0\leq l\leq 2k+m$. On the other hand, for $l=2k+j$ with $0\leq j\leq m$, a direct computation shows that we can write $\xi_{2k+j}$ as linear combinations of $\beta_j, \beta_{j+1}, \cdots$, i.e.
\begin{align}\label{eq-s-16}
0=\xi_{2k+j}=\vartheta_j \beta_j+\sum_{i>j}\eta_{j,i}\beta_i,
\end{align}
where $\eta_{j,i}\in \mathbb{Q}[g_2,g_3, B, C_k, C_{k-1},\cdots, C_{0}]=\mathbb{Q}[g_2,g_3][B]$ and (note $C_k=1$)
\begin{align}\label{eq-s-15}
\vartheta_j=&4j(j-1)(j-2)+18j(j-1)+12j-3k(4j^2+2j)\\
&+j(24k^2+6k-4\alpha)+[-6k(4k^2+2k)+(6k-2)\alpha-2\beta].\nonumber
\end{align}
Recall (\ref{alpha-nl})-(\ref{beta-nl}) that
\[
\alpha=(n+l)^2+(l+2)(n+l)+l(l+1),
\;\,
\beta=\frac{(l-1)(n+l)(n+2l+2)}{2}.
\]
Inserting these into (\ref{eq-s-15}) and noting $m=n+2l$,
we easily compute that for $l$ odd, i.e. $k=\frac{l-1}{2}$, there holds
\[\vartheta_j=4(j-m)(j-\tfrac{l-2}{2})(j+m-l+2),\]
and for $l$ even, i.e. $k=\frac{n+l}{2}$, there holds
\[\vartheta_j=4(j-m)(j-\tfrac{m-l-1}{2})(j+l+1).\]
Therefore, in both cases we have $\vartheta_m=0$ and
\[\vartheta_j\neq 0,\quad j=0,1,\cdots, m-1.\]
Since $\beta_m=1$ and $\beta_{m+3}=\beta_{m+2}=\beta_{m+1}=0$, so (\ref{eq-s-16}) holds automatically for $j=m$. By induction on (\ref{eq-s-16}) for $j=m-1, m-2, \cdots, 1, 0$, we easily conclude that $\beta_j\in \mathbb{Q}[g_2,g_3][B]$ for all $j=m-1,\cdots, 0$.

Finally, by considering the weight of $x$ being $1$, we see that $y_0$ is homogeneous of weight $k$, $\frac{y_0'}{\wp'}$ is homogeneous of weight $k-1$ and $y_0''$ is homogenous of weight $k+1$.

Clearly $\beta_m=1$ is homogenous of weight $0$. Suppose for some $j\leq m$ there holds that $\beta_i$ is homogenous of weight $m-i$ for all $i\in [j, m]$. Then $\sum_{i>j}\beta_i x^i$ (as a partial sum in the expression of $F$) is homogenous of weight $m$. Consequently, as parts of the coefficient $\xi_{2k+j}$ of $x^{2k+j}$, it is easy to see that $\sum_{i>j}\eta_{j,i}\beta_i$ is homogenous of weight $(2k+m)-(2k+j)=m-j$. Therefore, we see from (\ref{eq-s-16}) that $\beta_j$ is homogenous of weight $m-j$. This induction argument proves that for all $j\in [0,m]$, $\beta_j\in \mathbb{Q}[g_2,g_3][B]$ is homogenous of weight $m-j$.

{\bf Step 3.} We show that $\deg_B\beta_0=m$.

Suppose by contradiction that $\deg_B\beta_0=\tilde{m}<m$. Fix any $\tau$ such that the leading coefficient $c(g_2,g_3)$ of $\beta_0=c(g_2,g_3)B^{\tilde{m}}+\cdots$ is not zero.
Denote
\[k_j:=\deg_B\beta_j(B)\leq m-j\quad\text{for }j\in [1, m],\quad k_m=0. \]

Fix any $\varepsilon\in (0,1)$ such that $m^3\varepsilon<1$ and let $x=\wp(z)=B^{1-\varepsilon}$. Recall
\[F=x^m+\beta_{m-1}(B)x^{m-1}+\cdots\beta_{1}(B)x+\beta_{0}(B).\]
Then for $|B|$ large,
\[x^m\sim B^{m(1-\varepsilon)},\quad \beta_{0}(B)\sim B^{\tilde{m}},\quad m(1-\varepsilon)>\tilde{m},\]
\[\beta_{j}(B)x^j\sim B^{k_j+j(1-\varepsilon)},\quad\forall j\in [1,m],\]
where $f\sim B^s$ denotes $f=cB^s(1+o(1))$ with $c\neq 0$ as $|B|\to \infty$. Since
$k_j+j(1-\varepsilon)\neq k_i+i(1-\varepsilon)$ for $j\neq i$, there is a unique $j_0\in [1, m]$ such that
\[k_{j_0}+j_0(1-\varepsilon)>\max_{j\in[1,m]\setminus\{j_0\}} (k_j+j(1-\varepsilon))>\tilde{m},\]
namely,
\[F\sim B^{k_{j_0}+j_0(1-\varepsilon)}.\]
Similarly,
\[\frac{F'}{\wp'}\sim B^{k_{j_0}+(j_0-1)(1-\varepsilon)},\quad F''=O\left( B^{k_{j_0}+(j_0+1)(1-\varepsilon)}\right),\]
\[\frac{F'''}{\wp'}=O\left( B^{k_{j_0}+j_0(1-\varepsilon)}\right).\]

On the other hand, since $y_0=\sum_{j=0}^k C_j(B)x^j$ with $\deg_B C_j=k-j$, so
\[y_0\sim B^{k},\quad \frac{y_0'}{\wp'}\sim B^{k-1},\quad y_0''\sim B^{k-1+2(1-\varepsilon)}.\]
Inserting these and $p_1\sim B$ into $\frac{\text{LHS of (\ref{eq-s-103})}}{\wp'}$, it is easy to see that
\[4p_1y_0^2\frac{F'}{\wp'}\sim B^{2k+k_{j_0}+j_0(1-\varepsilon)+\varepsilon},\]
and all the other terms of $\frac{\text{LHS of (\ref{eq-s-103})}}{\wp'}$ is at most $O(B^{2k+k_{j_0}+j_0(1-\varepsilon)})$, namely
\[0=\frac{\text{LHS of (\ref{eq-s-103})}}{\wp'}=cB^{2k+k_{j_0}+j_0(1-\varepsilon)+\varepsilon}
(1+O(|B|^{-\varepsilon}))\]
for some $c\neq 0$ as $|B|\to \infty$, clearly a contradiction.

This completes the proof.
\end{proof}

\begin{lemma}\label{lem-ode-4} Recall \eqref{eq-ss-101} that $\Phi_e=\mathfrak{g}_1\mathfrak{g}_2$.
Define
\begin{equation}\label{eq-s-18}
Q_{n,l}(B):=I(z)\Phi_e(z)^2+\frac{\Phi_e'(z)^2}{4}-\frac{\Phi_e(z)\Phi_e''(z)}{2}.
\end{equation}
Then $Q_{n,l}(B)=W(\mathfrak{g}_1,\mathfrak{g}_2)^2/4\in \mathbb{Q}(g_2,g_3)[B]$ is a monic polynomial of $B$ with degree \[\deg Q_{n,l}=2(m-k)+1=\begin{cases}2n+3l+2 \quad\text{if $l$ is odd}\\
n+3l+1\quad\text{if $l$ is even}
\end{cases},\] where $W(\mathfrak{g}_1,\mathfrak{g}_2)=\mathfrak{g}_1'\mathfrak{g}_2-\mathfrak{g}_1\mathfrak{g}_2'$ is the Wronskian.
\end{lemma}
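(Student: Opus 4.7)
The plan is to proceed in three phases. First, I would verify the algebraic identity $Q_{n,l}(B) = W(\mathfrak{g}_1,\mathfrak{g}_2)^2/4$. Differentiating $\Phi_e = \mathfrak{g}_1\mathfrak{g}_2$ twice and using $\mathfrak{g}_j'' = I\mathfrak{g}_j$ gives $\Phi_e'' = 2I\Phi_e + 2\mathfrak{g}_1'\mathfrak{g}_2'$, so $\mathfrak{g}_1'\mathfrak{g}_2' = \tfrac{1}{2}\Phi_e'' - I\Phi_e$. Substituting into the elementary identity $W(\mathfrak{g}_1,\mathfrak{g}_2)^2 = (\mathfrak{g}_1'\mathfrak{g}_2 + \mathfrak{g}_1\mathfrak{g}_2')^2 - 4\mathfrak{g}_1\mathfrak{g}_2\cdot\mathfrak{g}_1'\mathfrak{g}_2' = (\Phi_e')^2 - 4\Phi_e\mathfrak{g}_1'\mathfrak{g}_2'$ produces $W(\mathfrak{g}_1,\mathfrak{g}_2)^2 = (\Phi_e')^2 - 2\Phi_e\Phi_e'' + 4I\Phi_e^2 = 4Q_{n,l}(B)$. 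Because $\mathfrak{g}'' = I\mathfrak{g}$ carries no first-derivative term, $W(\mathfrak{g}_1,\mathfrak{g}_2)$ is a constant in $z$, whence $Q_{n,l}(B)$ is $z$-independent.

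For polynomiality and a degree bound, I would Laurent-expand at $z = 0$. Because $C_k = 1$, one has $y_0(z) = z^{-2k}(1 + O(z^2))$ whose Laurent coefficients lie in $\mathbb{Q}[g_2,g_3][B]$; inversion preserves this ring, and the same applies to $F$. Hence $\Phi_e = F/y_0$ and $I = -\frac{3y_0''}{2y_0} + \frac{3(y_0')^2}{4y_0^2} + \alpha\wp + B$ are Laurent series at $z=0$ with coefficients in $\mathbb{Q}[g_2,g_3][B]$, and extracting the $z^0$-coefficient from the right-hand side of \eqref{eq-s-18} exhibits $Q_{n,l}(B) \in \mathbb{Q}[g_2,g_3][B]$. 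Assigning weights $1, 2, 3$ to $B, g_2, g_3$ (so $\wp$ has weight $1$), the quantities $y_0, F, \Phi_e, I$ are homogeneous of weights $k, m, m-k, 1$ respectively; consequently the right-hand side of \eqref{eq-s-18} is homogeneous of weight $2(m-k)+1$, giving $\deg_B Q_{n,l}(B) \leq 2(m-k)+1$.

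The main obstacle will be to prove that $Q_{n,l}(B)$ is monic of exact degree $2(m-k)+1$. My plan is to specialize to $g_2 = g_3 = 0$: then $\wp(z) = z^{-2}$ and the three-term recursion \eqref{recursive-y1-1} for $C_j$ (together with its analogue for $\beta_j$) collapses to a two-term relation solvable in closed form, yielding $C_j$ and $\beta_j$ as explicit rational multiples of $B^{k-j}$ and $B^{m-j}$ respectively. Substituting these into \eqref{eq-s-18} reduces $Q_{n,l}(B)$ to an explicit Laurent computation whose $B^{2(m-k)+1}$-coefficient can be read off and checked to equal $1$. Alternatively, one can exploit the identity $Q_{n,l}(B) = \mu^2/4$, where $W(Y_1,Y_2) = \mu\, y_0$; this holds because $W(Y_1,Y_2)$ is even (from $Y_2(z) = Y_1(-z)$), elliptic (by Corollary \ref{coro-dual}), and solves \eqref{3ode} (since the dual of \eqref{3ode-dual} is \eqref{3ode}), so by the uniqueness in Remark \ref{remark-evenelliptic} it is a scalar multiple of $y_0$. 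Comparing the leading $z^{-2k}$-coefficients in the Frobenius expansions of $W(Y_1,Y_2)$ and $y_0$ at $z=0$ (under the normalization $\beta_m = 1$) then pins down $\mu^2$ and in particular its leading $B$-behavior.
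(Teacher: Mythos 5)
Your first two phases are sound. The identity $W(\mathfrak{g}_1,\mathfrak{g}_2)^2=4Q_{n,l}(B)$ via $\Phi_e''=2I\Phi_e+2\mathfrak{g}_1'\mathfrak{g}_2'$ is exactly right, and observing that $W(\mathfrak{g}_1,\mathfrak{g}_2)$ is constant because $\mathfrak{g}''=I\mathfrak{g}$ has no first-order term is a legitimate (and slightly slicker) substitute for the paper's route, which instead differentiates the right-hand side of \eqref{eq-s-18} using that $\Phi_e$ solves \eqref{eq-s-3}. Your polynomiality argument --- expand everything in Laurent series at $z=0$ with coefficients in $\mathbb{Q}[g_2,g_3][B]$, note that inverting $y_0=z^{-2k}(1+O(z^2))$ stays in that ring, and read off the $z^0$-coefficient of a $z$-independent quantity --- is also valid and in fact gives a conclusion slightly stronger than the paper's, which only argues that $Q_{n,l}$ is a rational function of $B$ with no poles (because $W^2/4$ is finite for every $B$) and hence a polynomial in $\mathbb{Q}(g_2,g_3)[B]$. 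The weight count giving $\deg_B Q_{n,l}\le 2(m-k)+1$ is correct.

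The genuine gap is in your third phase, which you yourself flag as the main point: neither of your two routes to monicity and exact degree is carried out, and the second one as described would not work. Since $Y_1$ and $Y_2$ both have local exponent $-n-2l=-m$ at $z=0$ (Corollary \ref{coro-dual}), the leading Frobenius coefficients of $Y_1,Y_2$ produce \emph{cancellation} in $W(Y_1,Y_2)=Y_1'Y_2-Y_1Y_2'$: its Laurent expansion starts only at order $z^{-2k}$, far below the naive order $z^{-2m-1}$, so the constant $\mu$ in $W(Y_1,Y_2)=\mu y_0$ is \emph{not} determined by "comparing leading coefficients" --- it involves a long string of subleading coefficients of $Y_1$ and is no easier than the original problem. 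Your first route (setting $g_2=g_3=0$ to isolate the top $B$-coefficient, which by homogeneity is a pure rational number) is sound in principle but is a nontrivial closed-form computation of both recursions plus the division $F/y_0$ that you have not performed, and it is not evident a priori that the answer is $1$. The paper's own argument here is much shorter and is the one you should adopt: fix $z$ with $x=\wp(z)$ finite and let $|B|\to\infty$; the preceding lemma's key fact $\deg_B\beta_0=m$ (together with $\deg_B\beta_j\le m-j$ and $\deg_B C_j=k-j$) gives $\Phi_e\sim B^{m-k}$, $\Phi_e',\Phi_e''=O(B^{m-k-1})$ and $I=B(1+o(1))$, whence $Q_{n,l}(B)=B^{2(m-k)+1}(1+o(1))$ directly from \eqref{eq-s-18}. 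Note that your proposal never invokes $\deg_B\beta_0=m$, which is precisely the input needed to rule out degree drop; without it the weight bound only gives an inequality.
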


\begin{proof}
Since $\Phi_e(z)$ solves (\ref{eq-s-3}), the derivative of the RHS of (\ref{eq-s-18}) with respect to $z$ is clearly $0$, so $Q_{n,l}(B)$ is independent of $z$. Since
\begin{align}
\Phi_e&=\frac{F}{y_0}=\frac{\sum_{j=0}^m \beta_j x^j}{\sum_{j=0}^k C_j x^j}\nonumber\\
&=\frac{B^m+F_{m-1}(x)B^{m-1}+\cdots+ F_1(x)B+F_0(x)}{B^k+\varphi_{k-1}(x)B^{k-1}+\cdots+\varphi_1(x)B+\varphi_0(x)},
\end{align}
\[I=-\frac{3y_0''}{2y_0}+\frac{3}{4}\left(\frac{y_0'}{y_0}\right)^2-p_1
=B+\alpha x-\frac{3y_0''}{2y_0}+\frac{3}{4}\left(\frac{y_0'}{y_0}\right)^2,\]
where $\beta_j, C_j\in \mathbb{Q}[g_2,g_3,B]$ imply $F_j(x), \varphi_j(x)\in \mathbb{Q}[g_2, g_3, x]$, we conclude that $Q_{n,l}(B)\in \mathbb{Q}(g_2,g_3,B)$.

Recall (\ref{eq-ss-101}) that $\Phi_e=\mathfrak{g}_1\mathfrak{g}_2$, where $\mathfrak{g}_j''=I\mathfrak{g}_j$. Then the Wronskian
$W=W(\mathfrak{g}_1, \mathfrak{g}_2)=\mathfrak{g}_1'\mathfrak{g}_2-\mathfrak{g}_1\mathfrak{g}_2'$
satisfies
\[
\frac{\mathfrak{g}_{1}^{\prime}}{\mathfrak{g}_{1}}=\frac{\Phi_e^{\prime}+W}%
{2\Phi_e},\quad \frac{\mathfrak{g}_{2}^{\prime}}{\mathfrak{g}_{2}}=\frac{\Phi_e^{\prime
}-W}{2\Phi_e},
\]
which easily implies
\[
\frac{\Phi_e^{\prime \prime}}{2\Phi_e}-\frac{\Phi_e^{\prime}\pm W}
{2\Phi_e^{2}}\Phi_e^{\prime}=I-\left(  \frac{\Phi_e^{\prime
}\pm W}{2\Phi_e}\right)  ^{2}.
\]
From here we immediately obtain
\begin{equation}
\frac{W(\mathfrak{g}_1, \mathfrak{g}_2)^{2}}{4}=\frac{W^2}{4}=I\Phi_e^{2}+\frac{(\Phi_e^{\prime})^2}{4}-\frac
{\Phi_e\Phi_e^{\prime \prime}}{2}=Q_{n,l}(B).\label{eqWron}%
\end{equation}
In particular, it follows that $Q_{n,l}(B)\neq \infty$ for any $B$, so $Q_{n,l}(B)\in \mathbb{Q}(g_2, g_3)[B]$ is a polynomial of $B$.

To determine its degree, we fix any $z$ such that $x=\wp(z)$ is finite. Then as $|B|$ large, we easily deduce
\[\Phi_e=B^{m-k}(1+o(B^{-1})),\quad \Phi_e'=O(B^{m-k-1}), \quad \Phi_e''=O(B^{m-k-1}),\]
and $I=B(1+o(B^{-1}))$,
so we see from (\ref{eq-s-18}) that
\[Q_{n,l}(B)=B^{2(m-k)+1}(1+o(B^{-1})),\quad \text{as $|B|$ large},\]
i.e. $Q_{n,l}(B)$ is a monic polynomial of $B$ with $\deg_B Q_{n,l}=2(m-k)+1$.
\end{proof}

Recall that $N_j$ denotes the monodromy matrix with respect to any basis $(y_0, y_1, y_2)$ of the original equation (\ref{3ode}) under the transformation $z\to z+\omega_j$, i.e.
\[(y_0, y_1, y_2)(z+\omega_j)=(y_0, y_1, y_2)(z)N_j,\quad j=1,2,\]
here we use $M_j$ to denote the monodromy matrix with respect to any basis $(Y_0, Y_1, Y_2)$ of the dual equation (\ref{3ode-dual}) under the transformation $z\to z+\omega_j$, i.e.
\[(Y_0, Y_1, Y_2)(z+\omega_j)=(Y_0, Y_1, Y_2)(z)M_j,\quad j=1,2.\]
Now we can state
the main results of this section as follows, which implies Theorem \ref{thm-ode-mono}.

\begin{theorem}\label{Dual-Monodromy} Recall $y_0(z)$ in Remark \ref{remark-evenelliptic} and $Y_0(z)$ in Theorem \ref{even-elliptic-Y1}, and let $Y_1(z), Y_2(z)$ be in Corollary \ref{coro-dual}. Let $B\in\mathbb{C}$ such that $Q_{n,l}(B)\neq 0$. Then
\begin{itemize}
\item[(1)] $Y_0, Y_1, Y_2$ form a basis of solutions of the dual equation \eqref{3ode-dual}, and under this basis, the monodromy matrices $M_j$ are given by
    \begin{equation}\label{eq-s-21}M_j=\operatorname{diag}(1, \lambda_j, \lambda_j^{-1}),\quad j=1,2.\end{equation}
    Furthermore, $\lambda_j\neq \pm 1$ for at least one $j\in \{1,2\}$.
\item[(2)] $y_0$,  $y_1:=W(Y_2, Y_0)$, $y_2:=W(Y_0, Y_1)$ form a basis of solutions of the original equation \eqref{3ode}, and under this basis, the monodromy matrices $N_j$ are given by
    \begin{equation}\label{eq-ss-21}N_j=\operatorname{diag}(1, \lambda_j^{-1}, \lambda_j),\quad j=1,2.\end{equation}
    Furthermore, $y_2(z)=y_1(-z)$ has the local exponent $-n-l$ at $0$.
\end{itemize}
\end{theorem}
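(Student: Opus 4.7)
My plan rests on the substitution $\mathfrak{g}_j=y_0^{-1/2}Y_j$ from Lemmas~\ref{lem-ode-2}--\ref{lem-ode-4} and the duality principle \eqref{eq: data} which converts a basis of the dual equation \eqref{3ode-dual} into a basis of \eqref{3ode} via Wronskians. A direct computation from $Y_j=y_0^{1/2}\mathfrak{g}_j$ gives $W(Y_1,Y_2)=y_0\cdot W(\mathfrak{g}_1,\mathfrak{g}_2)$, and since $W(\mathfrak{g}_1,\mathfrak{g}_2)^2/4=Q_{n,l}(B)\neq 0$ by Lemma~\ref{lem-ode-4}, this yields $W(Y_1,Y_2)=c\,y_0$ for a nonzero constant $c$, so $Y_1,Y_2$ are linearly independent. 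To promote $\{Y_1,Y_2\}$ to the basis $\{Y_0,Y_1,Y_2\}$, I would argue by contradiction: if $Y_0=aY_1+bY_2$, evenness of $Y_0$ together with $Y_2(z)=Y_1(-z)$ forces $a=b$, so $Y_0=a(Y_1+Y_2)$ with $a\neq 0$; applying $z\mapsto z+\omega_j$ and invoking the linear independence of $Y_1,Y_2$ then forces $\lambda_j=1$ for both $j=1,2$. This trivial-monodromy case is ruled out by Liouville: every solution of \eqref{3ode-dual} would be doubly periodic, hence by \eqref{eq: data} every solution of \eqref{3ode} would be too, and the local solution with positive exponent $n+2l+2$ at $0$ would then be a holomorphic function on $E_\tau$ with a zero at $0$, forcing it to vanish identically. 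The translation rules for $Y_0,Y_1,Y_2$ then immediately give the diagonal form \eqref{eq-s-21}.

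For part~(2), applying \eqref{eq: data} to $(Y_0,Y_1,Y_2)$ produces linearly independent solutions $W(Y_1,Y_2)=c\,y_0$, $W(Y_2,Y_0)=y_1$, $W(Y_0,Y_1)=y_2$ of \eqref{3ode}, and tracking multipliers through the Wronskians yields \eqref{eq-ss-21}; the identity $y_2(z)=y_1(-z)$ is a short chain-rule calculation from $Y_2(z)=Y_1(-z)$. To pin down the local exponent of $y_1$ at $0$, I would expand $Y_1$ against the Frobenius basis $\{Y_0,Y_{l+1},Y_{n+l+2}\}$ of local solutions of \eqref{3ode-dual} at $0$ (available since $0$ is apparent by Lemma~\ref{Lem-app}) as $Y_1=\alpha Y_0+\alpha_2 Y_{l+1}+\alpha_3 Y_{n+l+2}$. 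Since each $Y_\rho$ is even or odd according to the parity of $\rho$ (by the $z\mapsto-z$ symmetry of \eqref{3ode-dual}), one obtains $Y_2=\alpha Y_0\pm\alpha_2 Y_{l+1}\mp\alpha_3 Y_{n+l+2}$ with signs dictated by the parity of $l$. The linear independence of $Y_1,Y_2$ and of $Y_0,Y_1,Y_2$ established above force both $\alpha_2,\alpha_3\neq 0$; a short computation then shows that $W(Y_{l+1},Y_0)$ and $W(Y_{n+l+2},Y_0)$ have local exponents $-n-l$ and $-l+1$ at $0$ respectively, so the dominant term in $y_1=W(Y_2,Y_0)$ comes from $\alpha_2 W(Y_{l+1},Y_0)$, yielding the local exponent $-n-l$.

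Finally, I would handle the assertion $\lambda_j\neq\pm 1$ for at least one $j$ by a Liouville argument on a double cover of $E_\tau$. If $\lambda_1,\lambda_2\in\{\pm 1\}$, then $y_2(z+2\omega_j)=\lambda_j^2y_2(z)=y_2(z)$, so $y_2$ descends to a meromorphic function on the doubled torus $\mathbb{C}/(2\omega_1\mathbb{Z}+2\omega_2\mathbb{Z})$; the four lattice points $0,\omega_1,\omega_2,\omega_1+\omega_2$ all project to the unique singular point of \eqref{3ode}, and at each of them the semi-periodicity of $y_2$ combined with its zero of order $n+2l+2>0$ at $z=0$ produces a zero of the same order. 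Since $y_2$ has no poles anywhere, it is holomorphic on the compact doubled torus with zeros, forcing $y_2\equiv 0$---a contradiction.

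The most delicate step is the local-exponent identification for $y_1$: the Wronskian $W(Y_2,Y_0)$ has substantially cancelled leading terms at $z=0$, and picking out $-n-l$ rather than $-l+1$ requires the Frobenius decomposition together with the non-vanishing of the resonance coefficient $\alpha_2$, which itself rests on excluding the degeneracy $Y_1+Y_2\propto Y_0$---the very case eliminated by the Liouville arguments above.
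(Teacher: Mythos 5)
Your overall architecture is close to the paper's: you get linear independence of $Y_1,Y_2$ from $Q_{n,l}(B)=W(\mathfrak{g}_1,\mathfrak{g}_2)^2/4\neq 0$, promote to the triple by contradiction, use the duality \eqref{eq: data} for part (2), and your Frobenius-parity decomposition of $Y_1$ at $0$ is a workable (if longer) alternative to the paper's identification of the local exponent, which instead observes that $W(y_1,y_2)$ is even elliptic, hence proportional to $Y_0$ with exponent $-n-2l$, forcing the common exponent $a$ of $y_1,y_2$ to satisfy $2a-1\leq -n-2l$ and hence $a=-n-l$. However, two of your exclusion steps have genuine gaps. The first is in ruling out $Y_0\in\operatorname{span}(Y_1,Y_2)$: you correctly derive $\lambda_1=\lambda_2=1$, but then assert that \emph{every} solution of \eqref{3ode-dual} would be doubly periodic. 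That does not follow. You only know that the two-dimensional subspace $\operatorname{span}(Y_1,Y_2)$ consists of elliptic functions; on the full three-dimensional solution space the monodromy could still be unipotent and nontrivial (all eigenvalues $1$ with a Jordan block), so the third, non-elliptic solution is not available for your Liouville argument. The contradiction is still reachable without this claim: by \eqref{eq-s-7} the solutions of \eqref{3ode-dual} with valuation $>-n-2l$ at $0$ form a two-dimensional subspace, which must meet $\operatorname{span}(Y_1,Y_2)$ nontrivially, producing a nonzero elliptic solution with positive exponent at its only possible pole, hence identically zero. (The paper's version is the parity form of this: $Y_1-Y_2=Y_1(z)-Y_1(-z)$ is odd elliptic with positive local exponent, hence $\equiv 0$, contradicting independence.) As written, your step does not close.

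The second gap is in the proof that $\lambda_j\neq\pm 1$ for at least one $j$, and it is not repairable by a small change. You apply a doubled-torus Liouville argument to $y_2=W(Y_0,Y_1)$ while invoking "its zero of order $n+2l+2>0$ at $z=0$"; but you have just proved (and the theorem asserts) that $y_2$ has local exponent $-n-l$ at $0$, i.e.\ a pole of order $n+l$, so the function you name has no such zero. If you instead intended the Frobenius solution of \eqref{3ode} with exponent $n+2l+2$, that solution is in general \emph{not} a common eigenfunction of $N_1,N_2$ when $\lambda_j\in\{\pm1\}$ (the eigenspaces are $\operatorname{span}(y_0)$ and $\operatorname{span}(y_1,y_2)$, and the positive-exponent solution need not lie in either), so it need not satisfy $u(z+\omega_j)=\pm u(z)$; it then acquires no forced zeros at $\omega_1,\omega_2,\omega_1+\omega_2$ on the doubled torus and may well have poles there, and a naive zero/pole count does not give a contradiction for general $n,l$. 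The paper's argument does genuine extra work here: if $\lambda_1,\lambda_2\in\{\pm1\}$ then $\mathfrak{g}_1^2,\mathfrak{g}_2^2,\mathfrak{g}_1\mathfrak{g}_2$ are all elliptic and form a basis of the symmetric-square equation \eqref{eq-s-3}, whence every even solution of \eqref{eq-s-3} is elliptic, contradicting Lemma \ref{lem-ode-3}. Some input of this kind is needed; your double-cover argument does not supply it.
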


\begin{proof}
(1) Suppose $Q_{n,l}(B)\neq 0$. Then Lemma \ref{lem-ode-4} implies that $\mathfrak{g}_1, \mathfrak{g}_2$ are linearly independent and so are $Y_1, Y_2$ by (\ref{eq-s-20}). Assume
\[a_0 Y_0(z)+a_1 Y_1(z)+a_2Y_2(z)=0\]
for some $a_0, a_1, a_2\in\mathbb{C}$ and $a_0\neq 0$. Then by $Y_0(z)=Y_0(-z)$ and $Y_2(z)=Y_1(-z)$ we have $a_2=a_1$.
Recalling Corollary \ref{coro-dual} that
\begin{equation}\label{eq-s-22}
Y_1(z+\omega_j)=\lambda_jY_1(z),\quad
Y_2(z+\omega_j)=\lambda_j^{-1}Y_2(z),\quad \;\, j=1,2,
\end{equation}
we deduce from $Y_0(z)$ being elliptic that
\[a_0 Y_0(z)+a_1 \lambda_jY_1(z)+a_2 \lambda_j^{-1}Y_2(z)=0,\]
and so $\lambda_j=1$ for $j=1,2$, i.e. $Y_1(z)$ and $Y_2(z)$ are also elliptic. Consequently, $Y_1(z)-Y_2(z)=Y_1(z)-Y_1(-z)$ is an odd elliptic solution of the dual equation (\ref{3ode-dual}) and so has the local exponent either $l+1$ or $n+l+2$ at $z=0$ by (\ref{eq-s-7}). But this implies that $Y_1(z)-Y_2(z)$ has no poles and vanishes at $z=0$, so $Y_1(z)-Y_2(z)=0$, a contradiction with that $Y_1, Y_2$ are linearly independent.

Therefore, $Y_0, Y_1, Y_2$ are linearly independent and (\ref{eq-s-21}) holds. If $\lambda_j=\pm 1$ for $j=1,2$, then $Y_1^2, Y_2^2$ are both elliptic and so are $\mathfrak{g}_1^2, \mathfrak{g}_2^2$. Since $\mathfrak{g}_1\mathfrak{g}_2, \mathfrak{g}_1^2, \mathfrak{g}_2^2$ form a basis of solutions of (\ref{eq-s-3}), we conclude that all solutions of (\ref{eq-s-3}) are elliptic, which is a contradiction with the fact that (\ref{eq-s-3}) has positive local exponents $1, 2\rho_2$ at $z=0$ as proved in Lemma \ref{lem-ode-3}. This proves $\lambda_j\neq \pm 1$ for at least one $j\in \{1,2\}$.

(2) By \eqref{eq: data} we know that $\hat{y}_0:=W(Y_1,Y_2)$,  $y_1:=W(Y_2, Y_0)$, $y_2:=W(Y_0, Y_1)$ are linearly independent solutions of the original equation (\ref{3ode-dual}) and
\begin{equation}\label{2eq-s-22}
y_1(z+\omega_j)=\lambda_j^{-1}y_1(z),\quad
y_2(z+\omega_j)=\lambda_jy_2(z),\quad \;\, j=1,2,
\end{equation}
namely the monodromy matrices with respect to $(\hat{y}_0, y_1, y_2)$ satisfy (\ref{eq-ss-21}). Since $Y_2(z)=Y_1(-z)$ implies that $\hat{y}_0(z)$ is an even elliptic solution of (\ref{3ode}), we conclude from Remark \ref{remark-evenelliptic} that $\hat{y}_0(z)=cy_0(z)$ for some constant $c\neq 0$.

Remark that $Y_2(z)=Y_1(-z)$ also implies $y_2(z)=y_1(-z)$, so it follows from \eqref{eq: data} that $W(y_1,y_2)$ is an even elliptic solution of the dual equation (\ref{3ode-dual}). This together with the assertion (1) implies $W(y_1, y_2)=cY_0$ for some constant $c\neq0$ and so has the local exponent $-n-2l$ at $0$. From here and the fact that the local exponent of $y_2(z)=y_1(-z)$ at $0$ is one of $-n-l, -l+1, n+2l+2$, we conclude that $y_2(z)=y_1(-z)$ has the local exponent $-n-l$ at $0$. This proves (2).
\end{proof}

The remaining case $Q_{n,l}(B)=0$ is quite different.

\begin{theorem}\label{Dual-Monodromy1}
If $Q_{n,l}(B)=0$, then $Y_1(z)=Y_2(z)$ and the monodromy matrices $M_1$, $M_2$ of the dual equation \eqref{3ode-dual} can not be diagonalized simultaneously. In particular, for each $j\in \{1,2\}$, the eigenvalues of $M_j$ are either $1,1,1$ or $1,-1,-1$.

Equivalently, the monodromy matrices $N_1$, $N_2$ of the original equation \eqref{3ode} can not be diagonalized simultaneously, and for each $j\in \{1,2\}$, the eigenvalues of $N_j$ are either $1,1,1$ or $1,-1,-1$.
\end{theorem}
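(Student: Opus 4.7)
The plan is to leverage the identity $Q_{n,l}(B)=W(\mathfrak{g}_1,\mathfrak{g}_2)^2/4$ from Lemma~\ref{lem-ode-4} together with the involution $\sigma:Y(z)\mapsto Y(-z)$, and to derive a contradiction from a dimension count on the odd part of the dual solution space. First I would deduce $Y_1=Y_2$ from $Q_{n,l}(B)=0$: by Lemma~\ref{lem-ode-4}, vanishing of $Q_{n,l}(B)$ forces $\mathfrak{g}_1,\mathfrak{g}_2$ to be linearly dependent, hence so are $Y_1,Y_2$, and $Y_2=cY_1$ for some constant $c$. Since $Y_2(z)=Y_1(-z)$ by~\eqref{eq-s-11}, we obtain $Y_1(-z)=cY_1(z)$ and $c^{2}=1$. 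The decisive parity check is that Corollary~\ref{coro-dual} shows $Y_1$ has local exponent $-n-2l$ at $0$, which is \emph{even} because $n$ is even, so the nonzero leading $z^{-n-2l}$ coefficient of $Y_1$ is preserved (not negated) under $z\mapsto-z$, forcing $c=1$. Thus $Y_1=Y_2$ is even, and substituting into \eqref{eq-ss-02}--\eqref{eq-ss-03} yields $\lambda_j=\lambda_j^{-1}$, hence $\lambda_j\in\{\pm1\}$. For the eigenvalues of $M_j$, the induced monodromy $\tilde N_j$ on~\eqref{eq-sec-ode} has $\det\tilde N_j=1$ and admits $f_1$ as a common eigenfunction with eigenvalue $\lambda_j$, so both its eigenvalues are $\lambda_j$; under the identification of~\eqref{eq-sec-ode} with the quotient of the solution space of~\eqref{3ode} by $\mathbb{C}y_0$ via $y\mapsto W(y,y_0)/y_0^{2}$, $N_j$ has eigenvalue list $\{1,\lambda_j,\lambda_j\}$, and by the exterior-square duality from~\eqref{eq: data} the same list is shared by $M_j$, giving $\{1,1,1\}$ or $\{1,-1,-1\}$.

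The heart of the proof is the non-diagonalizability. Assume for contradiction that $M_1,M_2$ are simultaneously diagonalizable; since all their eigenvalues lie in $\{\pm1\}$, we have $M_j^{2}=I$. The identity $\sigma M_j\sigma^{-1}=M_j^{-1}$, which is a direct consequence of the invariance of~\eqref{3ode-dual} under $z\mapsto-z$, then becomes $\sigma M_j\sigma^{-1}=M_j$, so $\sigma$ commutes with both $M_j$ and the decomposition $V_{\mathrm{dual}}=V^{+}\oplus V^{-}$ into $\sigma$-eigenspaces is $M_j$-stable. Since $n$ is even, exactly one of the three integer exponents $-n-2l,\,l+1,\,n+l+2$ of the dual equation at $0$ is odd, namely $n+l+2$ if $l$ is odd and $l+1$ if $l$ is even, and in either case it is strictly positive; this forces $\dim V^{-}=1$, spanned by an odd solution $Y^{-}$ that is holomorphic at $0$ and vanishes there. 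On the $1$-dimensional $V^{-}$, $M_j$ acts by a scalar $\nu_j\in\{\pm1\}$, so $(Y^{-})^{2}$ is even and $\omega_j$-periodic for both $j$, hence elliptic on $E_\tau$, holomorphic at the sole singularity $0$ and vanishing there; by Liouville $(Y^{-})^{2}\equiv 0$, and therefore $Y^{-}\equiv 0$, contradicting $\dim V^{-}=1$.

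The equivalent statement for $N_1,N_2$ follows at once from the $\Lambda^{2}$-duality already invoked, since that operation preserves both the eigenvalue multiset and the property of being diagonalizable on $3\times 3$ matrices of determinant one. The main obstacle in the plan is the parity step pinning $c=1$: were $c=-1$, then $Y_1$ would be forced odd while Corollary~\ref{coro-dual} still requires its local exponent at $0$ to be the \emph{even} integer $-n-2l$, giving an immediate incompatibility; once $c=1$ is secured, the rest reduces to the $V^{-}$ dimension count finished off by Liouville's theorem on $E_\tau$.
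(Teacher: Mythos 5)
Your proof is correct, but it reorganizes the argument in a genuinely different (and leaner) way than the paper. The opening steps coincide: both deduce $Y_1=Y_2$ from $W(\mathfrak{g}_1,\mathfrak{g}_2)=0$ plus the evenness of the exponent $-n-2l$, and hence $\lambda_j=\pm1$. After that the paper splits into cases on $(\lambda_1,\lambda_2)$ and, inside the case $(\lambda_1,\lambda_2)=(1,1)$, must separately exclude the possibility that $M_j$ has eigenvalues $1,\varepsilon_j,\varepsilon_j^{-1}$ with $\varepsilon_1\neq\pm1$ (its Case 2-2, handled by a double-dual argument with $W(\hat y_2,y_0)$ etc.). You bypass that entire case by first pinning down the full spectrum: $N_j$ preserves $\mathbb{C}y_0$ with eigenvalue $1$ and induces $\tilde N_j$ on the quotient, which is identified with the solution space of \eqref{eq-sec-ode} via $y\mapsto W(y,y_0)/y_0^2$, so $\mathrm{eig}(N_j)=\{1\}\cup\mathrm{eig}(\tilde N_j)=\{1,\lambda_j,\lambda_j\}$; the $\Lambda^2$-duality then gives the same multiset for $M_j$. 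The one step you should justify explicitly is $\det\tilde N_j=1$: the paper pointedly writes $\tilde N_j\in GL(2,\mathbb{C})$ ``not $SL(2,\mathbb{C})$ here,'' but the claim is true because the Wronskian of \eqref{eq-sec-ode} equals $Cy_0^{-3}$, which is elliptic (equivalently, $\tilde N_j$ is the action induced by $N_j\in SL(3,\mathbb{C})$ on $V/\mathbb{C}y_0$, and $N_j$ acts by $1$ on $\mathbb{C}y_0$). Your non-diagonalizability step is then a single unified argument: diagonalizability with spectrum in $\{\pm1\}$ forces $M_j^2=I$, whence $\sigma M_j\sigma^{-1}=M_j^{-1}=M_j$, the one-dimensional odd subspace $V^-$ becomes monodromy-stable, and Liouville kills its generator (which has positive local exponent since $n$ is even). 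This is essentially the same Liouville mechanism the paper uses in its Cases 1 and 2-1 via the odd solution $Y_3$, but your use of the involution $\sigma$ and the a priori eigenvalue computation absorbs all of the paper's subcases at once. What the paper's route buys is independence from the determinant fact about $\tilde N_j$; what yours buys is a case-free, more structural proof.
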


\begin{proof}
Suppose $Q_{n,l}(B)=0$. Then Lemma \ref{lem-ode-4} and (\ref{eq-s-20}) imply that $Y_1, Y_2$ are linearly dependent, i.e. $Y_1(z)=cY_2(z)=cY_1(-z)$. So $c=\pm 1$ and then $Y_1(z)=Y_1(-z)=Y_2(z)$ because the local exponent of $Y_1$ at $z=0$ is even. Consequently, it follows from (\ref{eq-s-22}) that $\lambda_j=\lambda_j^{-1}=\pm 1$ for $j=1,2$.

{\bf Case 1.} $(\lambda_1, \lambda_2)\neq (1,1)$. Without loss of generality, we may assume $\lambda_1=-1$.

Then $Y_1$ is not elliptic and so linearly independent with $Y_0$. Since $\det M_j=1$, so the eigenvalues of $M_j$ are $1, \lambda_j, \lambda_j$, i.e. either $1,1,1$ or $1,-1,-1$ for $j=1,2$.

Suppose by contradiction that $M_1, M_2$ can be diagonized simultaneously, then (\ref{3ode-dual}) has a solution $\hat{Y}_2$ linearly independent with $Y_0, Y_1$ such that
\begin{equation}\label{eq-s-23}\hat{Y}_2(z+\omega_j)=\lambda_j\hat{Y}_2(z),\quad j=1,2.\end{equation}
Clearly $\hat{Y}_2(-z)$ is also a solution of the dual equation (\ref{3ode-dual}), so
\[\hat{Y}_2(-z)=c_0Y_0(z)+c_1Y_1(z)+c_2\hat{Y}_2(z)\]
for some constants $c_j$'s.
Consequently, $\lambda_1=-1$ gives
\begin{align*}&-c_0Y_0(z)-c_1Y_1(z)-c_2\hat{Y}_2(z)\\
=&-\hat{Y}_2(-z)
=\hat{Y}_2(-z+\omega_1)=c_0Y_0(z)-c_1Y_1(z)-c_2\hat{Y}_2(z),\end{align*}
which implies $c_0=0$. This, together with $Y_1(z)=Y_1(-z)$, leads to
\begin{align*}\hat{Y}_2(-z)&=c_1Y_1(z)+c_2\hat{Y}_2(z)\\
&=c_1Y_1(z)+c_2(c_1Y_1(-z)+c_2\hat{Y}_2(-z))\\
&=c_1(1+c_2)Y_1(z)+c_2^2\hat{Y}_2(-z),\end{align*}
namely $c_2=-1$. If $c_1=0$, then $\hat{Y}_2(z)=-\hat{Y}_2(-z)$ is odd and so its local exponent at $z=0$ is positive by (\ref{eq-s-7}), namely $\hat{Y}_2(z)$ has no poles and vanishes at $z=0$. Consequently, (\ref{eq-s-23}) implies that $\hat{Y}_2(z)$ is a bounded holomorphic function and so $\hat{Y}_2(z)=0$, a contradiction. In conclusion,
\[\hat{Y}_2(-z)=c_1Y_1(z)-\hat{Y}_2(z), \quad c_1\neq 0.\]

Recalling (\ref{eq-s-7}), we denote $d_o\in \{l+1, n+l+2\}$ to be the odd number. By the standard Forbenius' method, there is an \emph{odd} solution $Y_3(z)$ of the dual equation (\ref{3ode-dual}) which has the local exponent $d_o$ at $z=0$. We claim that
\begin{equation}\label{eq-s-24}
Y_3(z+\omega_j)=\lambda_jY_3(z),\quad j=1,2.\end{equation}

Indeed, there are constants $\tilde{c}_j$'s such that
\[Y_3(z)=\tilde{c}_0Y_0(z)+\tilde{c}_1Y_1(z)+\tilde{c}_2\hat{Y}_2(z).\]
Consequently,
\begin{align*}-Y_3(z)&=Y_3(-z)=\tilde{c}_0Y_0(z)+\tilde{c}_1Y_1(z)+\tilde{c}_2\hat{Y}_2(-z)\\
&=\tilde{c}_0Y_0(z)+(\tilde{c}_1+\tilde{c}_2c_1)Y_1(z)-\tilde{c}_2\hat{Y}_2(z),\end{align*}
so $\tilde{c}_0=0$ and $\tilde{c}_1+\tilde{c}_2c_1=-\tilde{c}_1$. This shows
$Y_3(z)=\tilde{c}_1Y_1(z)+\tilde{c}_2\hat{Y}_2(z)$
and so (\ref{eq-s-24}) holds. Then the same argument as before shows that $Y_3(z)$ is a bounded holomorphic function vanishing at $z=0$ and so $Y_3(z)=0$, a contradiction.
Therefore, $M_1, M_2$ can not be diagonalized simultaneously.

{\bf Case 2.} $(\lambda_1, \lambda_2)=(1,1)$.

Then $Y_1(z)$ is even elliptic and so $Y_0(z)=c_0Y_1(z)$ for some constant $c_0$ by applying Theorem \ref{even-elliptic-Y1}. In particular, (\ref{eq-s-10}) implies
\begin{equation}\label{eq-s-27}
Y_0(z)=c_0Y_1(z)=W(c_0\tilde{y}_1, y_0).
\end{equation}

Since $\det M_j=1$, we may assume that the eigenvalues of $M_j$ are $1, \varepsilon_j, \varepsilon_j^{-1}$ for $j=1,2$.

Suppose by contradiction that $M_1, M_2$ can be diagonized simultaneously, then (\ref{3ode-dual}) has solutions $\hat{Y}_1, \hat{Y}_2$ such that $Y_0, \hat{Y}_1, \hat{Y}_2$ are linearly independent and
\begin{equation}\label{eq-sss-23}\hat{Y}_1(z+\omega_j)=\varepsilon_j\hat{Y}_1(z),\quad j=1,2.\end{equation}
\begin{equation}\label{eq-ssss-23}\hat{Y}_2(z+\omega_j)=\varepsilon_j^{-1}\hat{Y}_2(z),\quad j=1,2.\end{equation}

{\bf Case 2-1.} $\varepsilon_j=\varepsilon_j^{-1}=\pm 1$ for $j=1,2$.

If
$(\varepsilon_1, \varepsilon_2)=(1,1)$, then all solutions of the dual equation (\ref{3ode-dual}) are elliptic, clearly a contradiction with (\ref{eq-s-7}). Therefore, $(\varepsilon_1, \varepsilon_2)\neq (1,1)$. Note that $\hat{Y}_1$ can not be odd, otherwise $\hat{Y}_1$ is a bounded holomorphic function vanishing at $z=0$ and so $\hat{Y}_1=0$, a contradiction. So by replacing $\hat{Y}_1(z)$ by $\hat{Y}_1(z)+\hat{Y}_1(-z)$ if necessarily, we have $\hat{Y}_1(z)=\hat{Y}_1(-z)$. Then the same argument as Case 1 leads to a contradiction. Thus Case 2-1 is impossible.

{\bf Case 2-2} $\varepsilon_j\neq \pm 1$ for some $j$, say $\varepsilon_1\neq \pm 1$ for example.

Then since $\hat{Y}_1(-z)$ also satisfies (\ref{eq-ssss-23}), we obtain from $\varepsilon_1\neq \pm 1$ that $\hat{Y}_2(z)=\hat{Y}_1(-z)$ up to multiplying a constant, so we may assume $\hat{Y}_2(z)=\hat{Y}_1(-z)$.
Consequently, the same argument as Theorem \ref{Dual-Monodromy}-(2) shows that  $y_0$,  $\hat{y}_1:=W(\hat{Y}_2, Y_0)$, $\hat{y}_2(z):=W(Y_0, \hat{Y}_1)=\hat{y}_1(-z)$ are linearly independent solutions of the original equation (\ref{3ode}) and
\[\hat{y}_1(z+\omega_j)=\varepsilon_j^{-1}\hat{y}_1(z),\quad
\hat{y}_2(z+\omega_j)=\varepsilon_j\hat{y}_2(z),\quad j=1,2.\]
Applying this argument again, we see that $Y_0$, $\tilde{Y}_1:=W(\hat{y}_2, y_0)$, $\tilde{Y}_2:=W(y_0,\hat{y}_1)=-W(\hat{y}_1, y_0)$ are linearly independent solutions of the dual equation (\ref{3ode-dual}). However, since (\ref{eq-s-27}) says $Y_0=W(c_0\tilde{y}_1, y_0)$ and $\tilde{y}_1$ is a linear combination of $y_0$,  $\hat{y}_1$, $\hat{y}_2$, it follows from $W(y_0,y_0)=0$ that $Y_0$ is a linear combination of $\tilde{Y}_1$ and $\tilde{Y}_2$, again a contradiction. So Case 2-2 is also impossible.

Therefore, $M_1, M_2$ can not be diagonalized simultaneously. Together this with the fact $M_1M_2=M_2M_1$, we conclude that for $j=1,2$, $\varepsilon_j=\varepsilon_j^{-1}$, namely the eigenvalues of $M_j$ are either $1,1,1$ or $1,-1,-1$.

The proof is complete.
\end{proof}

\begin{proof}[Proof of Theorem \ref{thm-ode-mono}]
Theorem \ref{thm-ode-mono} follows directly from Theorems \ref{Dual-Monodromy}-\ref{Dual-Monodromy1}.
\end{proof}

We conclude this section by the following simple observation.
Let $\zeta(z)=\zeta(z;\tau):=-\int^{z}\wp(\xi;\tau)d\xi$
be the Weierstrass zeta function with two quasi-periods $\eta_{k}(\tau)$, $
k=1,2$:
\begin{equation}
\eta_{k}(\tau):=2\zeta(\tfrac{\omega_{k}}{2} ;\tau)=\zeta(z+\omega_{k} ;\tau)-\zeta(z;\tau),\quad k=1,2,
\label{40-2}
\end{equation}
and $\sigma(z)=\sigma(z;\tau):=\exp \int^{z}\zeta(\xi)d\xi$ be the Weierstrass sigma function. Notice that $\zeta(z)$ is an odd
meromorphic function with simple poles at $\mathbb{Z}+\mathbb{Z}\tau$ and $\sigma(z)$
is an odd holomorphic function with simple zeros at $\mathbb{Z}+\mathbb{Z}\tau$.

Since $y_0(z)$ is even elliptic with only a pole at $0$ with order $2k$, we have that up to multiplying a constant,
\begin{equation}\label{fc-y0}y_0(z)=\frac{\prod_{j=1}^{2k}\sigma(z-p_j)}{\sigma(z)^{2k}},\end{equation}
where
\[\{p_1,\cdots,p_{2k}\}=\{-p_1,\cdots,-p_{2k}\}\subset E_{\tau}\setminus \{0\}\]
denotes the set of zeros of $y_0(z)$ up to multiplicities, and we can always assume that $p_j$ lies in the fundamental parallelogram of $E_{\tau}$ centered at the origin for all $j$.

\begin{proposition}\label{lemma-3.11} Under the above notations, all the zeros of $y_0(z)$ satisfy $p_j\to 0$ for all $j$ as $B\to \infty$, and
\begin{equation}\label{2e1q-112}\lim_{B\to \infty}(I(z)-\alpha\wp(z)-B)=0,\end{equation}
where $I(z)$ is defined in (\ref{eq-s-2}).

Similarly, all zeros of $Y_0(z)$ converge to $0$ as $B\to \infty$.
\end{proposition}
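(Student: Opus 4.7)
The plan is to reduce both statements to the growth of the zeros of $y_0$ when viewed as a monic degree-$k$ polynomial in $x=\wp(z)$. From Remark~\ref{remark-evenelliptic} I would write
\[
y_0(z)=\sum_{j=0}^{k}C_j(B)\wp(z)^{j}=\prod_{j=1}^{k}(\wp(z)-x_j(B)),\qquad C_k=1,
\]
so that the zeros $p_j$ of $y_0$ in $E_{\tau}\setminus\{0\}$ are exactly the $\wp$-preimages of the $x_j(B)$. Since $\wp:E_{\tau}\setminus\{0\}\to\mathbb{P}^{1}\setminus\{\infty\}$ is a proper double cover, $\wp(p)\to\infty$ holds if and only if $p\to 0$ in $E_{\tau}$; thus it suffices to prove that every $x_j(B)\to\infty$ as $|B|\to\infty$.

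The decisive step will be to show that $\deg_B C_j(B)=k-j$ with a nonzero leading coefficient for each $0\le j\le k$. This follows by unwinding the top-down recursion
\[
\varphi_j C_j=(j+1)BC_{j+1}+(j+1)(j+\tfrac{3}{2})(j+2)g_2 C_{j+2}+(j+1)(j+2)(j+3)g_3 C_{j+3}
\]
from \eqref{recursive-y1-1}, because $\varphi_j=\tfrac12(2j-l+1)(2j-n-l)(2j+n+2l+2)$ does not vanish on $0\le j\le k-1$, either for $k=(l-1)/2$ (when $l$ is odd) or for $k=(n+l)/2$ (when $l$ and $n$ are both even, as in this section). Induction starting from $C_k=1$ then yields
\[
\gamma_j:=\lim_{B\to\infty}\frac{C_j(B)}{B^{k-j}}=\prod_{i=j}^{k-1}\frac{i+1}{\varphi_i}\neq 0,
\]
and in particular $|C_0(B)|\sim|\gamma_0||B|^{k}$. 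Hence on any disk $\{|x|\le R\}$,
\[
|y_0(x)|\ge |C_0(B)|-\sum_{j=1}^{k}|C_j(B)|R^{j}\ge |\gamma_0||B|^{k}(1+o(1))\longrightarrow\infty,
\]
so for $|B|$ sufficiently large no root $x_j(B)$ lies in $\{|x|\le R\}$. As $R$ is arbitrary this forces $x_j(B)\to\infty$, hence $p_j\to 0$.

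For the limit \eqref{2e1q-112}, substituting into \eqref{eq-s-2} gives
\[
I(z)-\alpha\wp(z)-B=-\frac{3y_0''(z)}{2y_0(z)}+\frac{3}{4}\left(\frac{y_0'(z)}{y_0(z)}\right)^{2},
\]
while the product formula yields
\[
\frac{y_0'(z)}{y_0(z)}=\wp'(z)\sum_{j=1}^{k}\frac{1}{\wp(z)-x_j(B)}.
\]
For $z$ fixed outside the lattice each summand is $O(1/x_j(B))$, so the logarithmic derivative tends to $0$ as $|B|\to\infty$. Writing $y_0''/y_0=(y_0'/y_0)^{2}+(y_0'/y_0)'$ and differentiating the product expression once more (producing a second sum of the same type) shows $y_0''/y_0\to 0$ as well, which proves \eqref{2e1q-112}. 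The statement for $Y_0$ is then handled by the identical argument, replacing $y_0,C_j(B)$ with $Y_0,D_j(B)$ from Theorem~\ref{even-elliptic-Y1}; the associated recursion is completely analogous and its coefficient does not vanish in the relevant range.

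The main obstacle is the very first step, namely establishing that $\deg_B C_0=k$ with a nonzero leading coefficient. Once the recursion is opened up and the nonvanishing of $\varphi_j$ on $\{0,\ldots,k-1\}$ is verified in both parities of $l$, everything else — the escape of the $x_j(B)$ to infinity and the decay of the two logarithmic derivatives — is a routine consequence.
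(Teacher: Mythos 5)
Your proposal is correct, but it follows a genuinely different route from the paper. The paper works with the logarithmic derivative $h=y_0'/y_0=\sum_{j}\zeta(z-p_j)-2k\zeta(z)$, derives from \eqref{3ode} the Riccati-type identity $h''+3hh'+h^3+\beta\wp'-\alpha\wp h=Bh$, and argues by a soft compactness/boundedness argument (along subsequences of $B\to\infty$, with limit points $p_{\infty,j}$) that $h\to 0$ locally uniformly away from the limit zeros, which forces all $p_{\infty,j}=0$ and then yields \eqref{2e1q-112} since $I-\alpha\wp-B=-\tfrac34(2h'+h^2)$. You instead exploit the explicit polynomial structure $y_0=\prod_{j=1}^k(\wp(z)-x_j(B))$ and the coefficient asymptotics $C_j(B)\sim\gamma_jB^{k-j}$ with $\gamma_j=\prod_{i=j}^{k-1}\tfrac{i+1}{\varphi_i}\neq 0$, concluding that all roots $x_j(B)$ escape to infinity and hence $p_j\to 0$, and then reading off \eqref{2e1q-112} from the partial-fraction form of $y_0'/y_0$. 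Your key input --- that $\deg_B C_j=k-j$ with nonzero (rational, weight-zero) leading coefficient --- is in fact already asserted in Theorems \ref{even-elliptic-y1}--\ref{even-elliptic-y11} and Remark \ref{remark-evenelliptic}, and your verification that $\varphi_j\neq 0$ for $0\le j\le k-1$ in both parities of $l$ (and likewise for the dual recursion on $0\le j\le m_0-1$) is correct, so the "main obstacle" you flag is not actually an obstacle. What your approach buys is an explicit, quantitative statement (the roots $x_j(B)$ of $y_0$ as a polynomial in $\wp$ tend to $\infty$ at controlled rates, and the convergence in \eqref{2e1q-112} is uniform on compacta of $E_\tau\setminus\{0\}$) without passing to subsequences; what the paper's argument buys is independence from the precise degree count, relying only on the ODE itself, which makes it transferable to situations where the even elliptic solution is not given as an explicit polynomial in $\wp$.
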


\begin{proof} Since $p_j$ lies in the fundamental parallelogram of $E_{\tau}$ centered at $0$, up to a subsequence of $B\to \infty$, we may assume $p_j=p_j(B)\to p_{\infty,j}$ for all $j$.
Define
\begin{equation}\label{2e1q-113}
h(z):=\frac{y_0'(z)}{y_0(z)}=\sum_{j=1}^{2k}\zeta(z-p_j)-2k\zeta(z),
\end{equation}
which is an odd elliptic function. Since
\[\frac{y_0'''(z)}{y_0(z)}=h''+3hh'+h^3,\]
we see from $y_0'''-(\alpha\wp+B)y_0'+\beta\wp' y_0=0$ that
\begin{equation}\label{2e1q-111}
h''+3hh'+h^3+\beta\wp'-\alpha\wp h=Bh.
\end{equation}
Clearly as $B\to\infty$, since the LHS of (\ref{2e1q-111}) is uniformly bounded in any compact subset of $E_{\tau}\setminus\{0, p_{\infty,1},\cdots, p_{\infty,2k}\}$, it follows from the RHS of (\ref{2e1q-111}) that $h(z)\to 0$ in any compact subset of $E_{\tau}\setminus\{0, p_{\infty,1},\cdots, p_{\infty,2k}\}$ as $B\to\infty$, so we conclude from (\ref{2e1q-113}) that $p_{\infty, j}=0$ for all $j$. From here we also have $h'(z), h''(z)\to 0$ as $B\to\infty$, so (\ref{2e1q-111}) implies
$Bh(z)\to \beta \wp'(z)$ as $B\to \infty$. Recalling (\ref{eq-s-2}) that
\[I(z)-\alpha\wp-B=-\frac{3y_0''}{2y_0}+\frac{3}{4}\left(\frac{y_0'}{y_0}\right)^2
=-\frac{3}{4}(2h'+h^2),\]
we see that (\ref{2e1q-112}) holds.

The same argument applies to the dual equation and implies that all the zeros of $Y_0(z)$ converges to $0$ as $B\to\infty$.
\end{proof}

\section{The special case $n$ even and $l=1$}

\label{sec-neven}

In this section, we consider the special case $n\geq 0$ even and $l=1$ and prove Theorem \ref{thml=1}. Recall that (\ref{alpha-nl})-(\ref{beta-nl}) give $\alpha=(n+2)(n+3)$ and $\beta=0$, so (\ref{3ode}) becomes
\begin{equation}\label{lame-derivative}
y'''(z)=[(n+2)(n+3)\wp(z)+B]y'(z),
\end{equation}
which is related to the Lam\'{e} equation
\begin{equation}\label{lame}
y''(z)=[(n+2)(n+3)\wp(z)+B]y(z).
\end{equation}
Recall the Lam\'{e} polynomial $\ell_{n+2}(B)$ stated in Theorem \ref{thmA}.
Then we have the following result.

\begin{lemma}\label{23ode-Lame} Let $n\geq 0$ be even and recall the polynomial $Q_{n,1}(B)$ for \eqref{lame-derivative} constructed in Lemma \ref{lem-ode-4}. Then
\begin{equation}\label{Lamepoly1}
Q_{n,1}(B)=\ell_{n+2}(B).
\end{equation}
In particular, the monodromy of \eqref{lame-derivative} is unitary for some $B$ if and only if the monodromy of the Lam\'{e} equation \eqref{lame} is unitary for the same $B$.
\end{lemma}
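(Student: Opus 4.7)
The plan rests on the simple observation that for $l = 1$ formula (\ref{beta-nl}) gives $\beta = 0$, so (\ref{lame-derivative}) has the form $y''' = (\alpha\wp + B) y'$ with $\alpha = (n+2)(n+3)$. Hence if $y$ solves (\ref{lame-derivative}), then $u := y'$ solves the Lam\'e equation (\ref{lame}) with $m = n+2$, and conversely any Lam\'e solution integrates to a solution of (\ref{lame-derivative}). Moreover, Remark \ref{remark-evenelliptic} specializes to $y_0(z) \equiv 1$ when $l = 1$, so the potential in (\ref{eq-s-2}) collapses to
\[
I(z) = -p_1(z) = (n+2)(n+3)\wp(z) + B.
\]
Thus the auxiliary second-order ODE $\mathfrak{g}'' = I(z)\mathfrak{g}$ used throughout Section \ref{sec-monoth} is literally the Lam\'e equation (\ref{lame}) with $m=n+2$.

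The next step is to identify the pair $\mathfrak{g}_1,\mathfrak{g}_2$ produced by Lemma \ref{lem-ode-1} with the two Floquet (common eigen-) solutions of the Lam\'e operator. Under this identification $\Phi_e = \mathfrak{g}_1\mathfrak{g}_2$ becomes the (unique up to scaling) even elliptic solution of the second symmetric product of (\ref{lame}), normalized by Step 2 of the proof of Lemma \ref{lem-ode-4} to be monic in $\wp$. Lemma \ref{lem-ode-4} then yields
\[
Q_{n,1}(B) \;=\; I\Phi_e^2 + \tfrac{1}{4}(\Phi_e')^2 - \tfrac{1}{2}\Phi_e\Phi_e'' \;=\; \tfrac{1}{4}\,W(\mathfrak{g}_1,\mathfrak{g}_2)^2,
\]
which is exactly the classical Hermite--Halphen / Gesztesy--Weikard characterization of the Lam\'e spectral polynomial: once $\Psi = \psi_+\psi_-$ is normalized to be monic in $\wp$, one has $\ell_m(B) = W(\psi_+,\psi_-)^2/4$. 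Since both $Q_{n,1}(B)$ and $\ell_{n+2}(B)$ are monic of the same degree $2(n+2)+1 = 2n+5$ (Lemma \ref{lem-ode-4} versus Theorem \ref{thmA}) and are given by the identical formula $W^2/4$ under the identical normalization of $\Psi$, the equality $Q_{n,1}(B) = \ell_{n+2}(B)$ drops out.

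For the monodromy statement, I would proceed as follows. If $Q_{n,1}(B) = \ell_{n+2}(B) \neq 0$, Theorem \ref{Dual-Monodromy} provides a basis of (\ref{lame-derivative}) in which the monodromy matrices are $\operatorname{diag}(1, \lambda_j^{-1}, \lambda_j)$, and the very same $\lambda_j$'s diagonalize the Lam\'e monodromy $\widetilde{N}_j = \operatorname{diag}(\lambda_j, \lambda_j^{-1})$, because $\mathfrak{g}_1, \mathfrak{g}_2$ are common Floquet solutions on both sides. Thus unitarity on either side is precisely the condition $|\lambda_1| = |\lambda_2| = 1$. If instead $Q_{n,1}(B) = \ell_{n+2}(B) = 0$, then neither monodromy is simultaneously diagonalizable (by Theorem \ref{Dual-Monodromy1} and Theorem \ref{thmA} respectively), so neither can be unitary. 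Therefore $\Omega_{n,1} = \Omega_{n+2}$, and the remaining assertions follow from Theorem \ref{thmB}.

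The main obstacle is the identification step: matching $Q_{n,1}(B)$ with the classical Lam\'e polynomial $\ell_{n+2}(B)$ requires care about the normalization of $\Psi$ and signs in the Hermite--Halphen formula. If invoking the classical reference is awkward, one can bypass it as follows: both polynomials are monic of degree $2n+5$, both vanish precisely when the Lam\'e monodromy fails to possess two independent common eigenfunctions, and generic simplicity of roots (e.g.\ from a real-spectrum argument at $\tau \in i\mathbb{R}_{>0}$, analogous to Lemma \ref{lemma-real}) eliminates any ambiguity in multiplicities, forcing the two monic polynomials to coincide.
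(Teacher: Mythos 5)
Your proposal is correct, and it reaches the conclusion by a route that partly differs from the paper's. The paper establishes the equivalence $Q_{n,1}(B)=0\Leftrightarrow \ell_{n+2}(B)=0$ by working with the \emph{third-order} equation: it takes the diagonalizing basis $(1,y_1,y_2)$ from Theorem \ref{Dual-Monodromy} and differentiates to get Floquet solutions of \eqref{lame} (Step 1), and conversely integrates Floquet solutions of \eqref{lame} to meromorphic solutions of \eqref{lame-derivative} and adjusts constants to recover eigenfunctions (Step 2); it then closes with degree count plus generic simplicity of the roots of $\ell_{n+2}$ and continuity in $\tau$ (Step 3). You instead observe that for $l=1$ one has $y_0\equiv 1$, so the auxiliary equation \eqref{eq-s-2} (equivalently \eqref{eq-sec-ode}) \emph{is} the Lam\'e equation \eqref{lame} and $\mathfrak{g}_1,\mathfrak{g}_2$ are its Floquet solutions; this lets you compare $Q_{n,1}=W(\mathfrak{g}_1,\mathfrak{g}_2)^2/4$ with $\ell_{n+2}$ entirely at the second-order level, which is a genuine streamlining of Steps 1--2 for the polynomial identity. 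One caveat: your primary identification via the Hermite--Halphen/Gesztesy--Weikard formula assumes that $\ell_{n+2}$ is \emph{defined} as $W(\psi_+,\psi_-)^2/4$ under the monic normalization of $\Psi=\psi_+\psi_-$, whereas Theorem \ref{thmA} as stated only characterizes $\ell_m$ by its degree and its zero set, i.e.\ up to a scalar and root multiplicities; so, strictly within what the paper provides, that step is not self-contained. Your fallback --- both polynomials are monic of degree $2n+5$ with the same zero set (since $\mathfrak{g}_1,\mathfrak{g}_2$ are linearly dependent exactly when the Lam\'e monodromy fails to be simultaneously diagonalizable), and generic distinctness of roots forces equality --- is exactly the paper's Step 3 and does close the argument. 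The monodromy equivalence at the end (same $\lambda_j$'s on both sides when $Q_{n,1}(B)\neq 0$; no unitarity on either side when $Q_{n,1}(B)=0$, by Theorem \ref{Dual-Monodromy1} and Theorem \ref{thmA}) matches the paper's reasoning.
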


\begin{remark} Remark that the polynomial $Q_{n,1}(B)$ is defined only for even $n$. If $n$ is odd, then the second statement of Lemma \ref{23ode-Lame} does not necessarily hold either, because Theorem \ref{thm-nonexistence} indicates that the monodromy of (\ref{lame-derivative}) can not be unitary for any $B$ and any $\tau$, but it follows from Theorem \ref{thmB} that the monodromy of (\ref{lame}) could be unitary for some $B, \tau$'s.
\end{remark}

\begin{proof}[Proof of Lemma \ref{23ode-Lame}]
Remark that for (\ref{lame-derivative}), its unique even elliptic solution $y_0(z)=1$. Furthermore, $k=\frac{1-1}{2}=0$ and $m=n+2$, so $\deg Q_{n,1}=2(n+2)+1=\deg \ell_{n+2}$.

{\bf Step 1.}
Let $Q_{n,1}(B)\neq 0$. We prove that $\ell_{n+2}(B)\neq 0$.

Since $Q_{n,1}(B)\neq 0$, it follows from Theorem \ref{Dual-Monodromy} that (\ref{lame-derivative}) has a basis of solutions $(1, y_1, y_2)$ such that $y_1(-z)=y_2(z)$ and
\[N_j=\text{diag}(1, \lambda_j, \lambda_j^{-1}),\quad j=1,2,\]
for some $\lambda_1,\lambda_2\in\mathbb{C}\setminus\{0\}$ with $\lambda_j\neq \pm 1$ for at least one $j\in \{1,2\}$.

Let $f_1=y_1'$ and $f_2=y_2'$. Then $f_1, f_2$ are linearly independent solutions of the Lam\'{e} equation (\ref{lame}) and satisfy
\[f_1(z+\omega_j)=\lambda_jf_1(z), \quad f_2(z+\omega_j)=\lambda_j^{-1}f_2(z),\quad j=1,2,\]
namely with respect to $f_1, f_2$, the monodromy matrices
\[\widetilde{N}_j=\text{diag}(\lambda_j, \lambda_j^{-1}), \quad j=1,2.\]
This implies $\ell_{n+2}(B)\neq 0$. Furthermore, if the monodromy of (\ref{lame-derivative}) is unitary (i.e. $|\lambda_1|=|\lambda_2|=1$), then so does the monodromy of (\ref{lame}).

{\bf Step 2.} Let $\ell_{n+2}(B)\neq 0$. We prove that $Q_{n,1}(B)\neq 0$.

Since $\ell_{n+2}(B)\neq 0$, then by \cite{CLW,Gesztesy-Weikard}, there are linearly independent solutions $f_1(z)$, $f_2(z)=f_1(-z)$ of (\ref{lame}) such that (\ref{Mono-1}) holds for some $\lambda_1, \lambda_2\in\mathbb{C}\setminus\{0\}$ with $\{\lambda_1,\lambda_2\}\not\subset\{\pm 1\}$, i.e.
\begin{equation}\label{y1-trans}f_1(z+\omega_1)=\lambda_1f_1(z), \quad f_1(z+\omega_2)=\lambda_2f_1(z).\end{equation}
Without loss of generality, we may assume $\lambda_1\neq \pm 1$.

Recall that all solutions of (\ref{lame-derivative}) are meromorphic.
Let $y_1(z)$ be a solution of (\ref{lame-derivative}) such that $y_1'(z)=f_1(z)$. Then $y_1(z)$ is  meromorphic\footnote{This fact might not hold if $n$ is odd. Note that the local exponents of the Lam\'{e} equation (\ref{lame}) at $0$ are $-n-2, n+3$, so the local exponent of $f_1(z)$ at $0$ is $-n-2$. If $n$ is odd and the monodromy of (\ref{lame}) is unitary, then $y_1(z)$ has a logarithmic singularity at $0$ (which is equivalent to Res$_{z=0}f_1(z)\neq 0$), so $y_1(z)$ is not meromorphic. Indeed, if $y_1(z)$ is meromorphic, then the same proof will imply that the monodromy of (\ref{lame-derivative}) is unitary, a contradiction with Theorem \ref{thm-nonexistence}.} and is unique up to adding a constant. Let $y_2(z):=-y_1(-z)$, then $y_2'(z)=f_1(-z)=f_2(z)$. Suppose there exist constants $c_j$'s such that
\[c_0+c_1y_1+c_2y_2=0,\]
then $c_1f_1+c_2f_2=0$, which implies $c_1=c_2=0$ and so does $c_0$. Thus $(1, y_1, y_2)$ is a basis of solutions of (\ref{lame-derivative}). Since $y_1(z+\omega_1)$ is also a solution, we have
\[y_1(z+\omega_1)=d_0+d_1y_1(z)+d_2y_2(z),\]
where $d_i$'s are constants. Comparing this with (\ref{y1-trans}), we have $d_1=\lambda_1\neq \pm1$ and $d_2=0$. Then by replacing $y_1$ with $y_1-\frac{d_0}{1-\lambda_1}$, we have
\[y_1(z+\omega_1)=\lambda_1y_1(z),\]
and so
\[y_2(z+\omega_1)=-y_1(-z-\omega_1)=\lambda_1^{-1}y_2(z),\]
namely with respect to $(1, y_1, y_2)$, we have $N_1=\text{diag}(1, \lambda_1, \lambda_1^{-1})$. By $N_1N_2=N_2N_1$ and $\lambda_1\neq \pm1$, we obtain that $N_2=\text{diag}(1, \tilde{\lambda}_2, \tilde{\lambda}_2^{-1})$, i.e. $y_1(z+\omega_2)=\tilde{\lambda}_2 y_1(z)$. Again by (\ref{y1-trans}), we have $\tilde{\lambda}_2=\lambda_2$, so $N_2=\text{diag}(1, \lambda_2, \lambda_2^{-1})$. This implies $Q_{n,1}(B)\neq 0$.  Furthermore, if the monodromy of (\ref{lame}) is unitary (i.e. $|\lambda_1|=|\lambda_2|=1$), then so does the monodromy of (\ref{lame-derivative}).

{\bf Step 3.}
By Steps 1 and 2, we obtain
\[Q_{n,1}(B)= 0 \Longleftrightarrow \ell_{n+2}(B)=0.\]
Since $\deg Q_{n,1}=2(n+2)+1=\deg \ell_{n+2}$ and it is well known (cf. \cite{Whittaker-Watson}) that the Lam\'{e} polynomial $\ell_{n+2}(\cdot)$ has $2(n+2)+1$ distinct roots for generic $\tau$, we conclude that \eqref{Lamepoly1} holds for generic $\tau$ and hence for all $\tau$ by continuity.
\end{proof}

\begin{proof}[Proof of Theorem \ref{thml=1}] Theorem \ref{thml=1} follows from Lemma \ref{23ode-Lame} and Theorem \ref{thmB}.
\end{proof}

\subsection*{Acknowledgements} The research of Z. Chen was supported by NSFC (No. 12222109, 12071240).

\subsection*{Conflict of interest} The authors have no conflicts to disclose.

\end{document}